\newtheorem{thm}{Theorem}[section]
\newtheorem{lem}{Lemma}[section]
\newtheorem{cor}{Corollary}[section]
\newtheorem{prop}{Proposition}[section]
\theoremstyle{remark}
\newtheorem{remark}{Remark}[section]
\subjclass[2010]{11N25, 11N37, 11A51}
\keywords{Mertens' theorems, Mertens constant, almost primes, semiprimes, prime zeta function}
\title[Higher Mertens constants]{Higher Mertens constants for\\ almost primes}
\author[J.\ Bayless]{Jonathan Bayless}
\address{University of Maine at Augusta, 46 University Drive, Augusta, Maine 04330}
\email{jonathan.bayless@maine.edu}
\author[P.\ Kinlaw]{Paul Kinlaw}
\address{Husson University, 1 College Circle
Bangor, Maine 04401}
\email{kinlawp@husson.edu}
\author[J.\ D.\ Lichtman]{Jared Duker Lichtman}
\address{Mathematical Institute, University of Oxford, Oxford, OX2 6GG, UK}
\email{jared.d.lichtman@gmail.com}
\date{January 27, 2022}
\begin{document}

\begin{abstract}
For $k\ge1$, a $k$-almost prime is a positive integer with exactly $k$ prime factors, counted with multiplicity. In this article we give elementary proofs of precise asymptotics for the reciprocal sum of $k$-almost primes. Our results match the strength of those of classical analytic methods. We also study the limiting behavior of the constants appearing in these estimates, which may be viewed as higher analogues of the Mertens constant $\beta=0.2614..$. Further, in the case $k=2$ of semiprimes we give yet finer-scale and explicit estimates, as well as a conjecture.
\end{abstract}

\maketitle

\section{Introduction}
For a positive integer $n$ denote by $\Omega(n)$ the number of prime factors of $n$, counted with multiplicity. For $k\ge1$, a $k$-almost prime is a positive integer $n$ with $\Omega(n)=k$.

In this paper we consider the reciprocal sum of $k$-almost primes,
\begin{align}\label{eq:Rkx}
{\mathcal R}_k(x) = \mathop{\sum_{\Omega(n)=k}}_{n\le x}\frac{1}{n} = \mathop{\sum_{p_1\cdots p_k\le x}}_{p_1\le \ldots\le p_k}\frac{1}{p_1\cdots p_k}.
\end{align}
In estimating this sum, we generalize Mertens' second theorem \cite{mertens},
\begin{align*}
\sum_{p\le x}\frac{1}{p} = \log_2 x + \beta + O\left(\frac{1}{\log x}\right),
\end{align*}
where $\beta=0.2614\ldots$ is the Mertens (or Meissel--Mertens) constant, which satisfies
\begin{equation}\label{mertensconstant}
\beta=\gamma+\sum_p\left(\frac{1}{p}+\log\left(1-\frac{1}{p}\right)\right).
\end{equation}
Here $\gamma=0.5772\ldots$ is the Euler--Mascheroni constant. 

Throughout the paper, we denote $p$ as a prime variable and $\log_2 x=\log(\log x)$ as the iterated natural logarithm. In addition, $\Gamma$ is the Euler gamma function and $\zeta$ is the Riemann zeta function.

Our main result is the following.

\begin{thm}\label{mainthm2}
For $x\ge3$, we have
\begin{equation}\label{Vk}
{\mathcal R}_k(x) = \sum_{j=0}^k\frac{\nu_{k-j}}{j!}(\log_2 x)^j + O_k\left(\frac{(\log_2 x)^{k-1}}{\log x}\right),
\end{equation}
where $\nu_j := \nu^{(j)}(0)/j!$ are the Taylor coefficients of
\begin{equation}\label{nu}
\nu(z) = \frac{1}{\Gamma(z+1)}\prod_{p}\left(1-\frac{z}{p}\right)^{-1}\left(1-\frac{1}{p}\right)^z.
\end{equation}
\end{thm}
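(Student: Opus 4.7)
The plan is to argue by induction on $k$. For the base case $k = 1$, the statement reduces to Mertens' second theorem, provided I verify $\nu_0 = 1$ and $\nu_1 = \beta$. Both are immediate from (\ref{nu}): $\nu(0) = 1$ follows from the trivial values of the Gamma function and the Euler product at $z=0$, while the logarithmic derivative at $z=0$ gives $\nu'(0) = -\psi(1) + \sum_p \bigl[1/p + \log(1-1/p)\bigr] = \gamma + (\beta - \gamma) = \beta$ by (\ref{mertensconstant}).

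For the inductive step, I would use the identity $\log n = \sum_{d\mid n}\Lambda(d)$ combined with the complete additivity $\Omega(p^j m) = j + \Omega(m)$ to derive
\begin{equation*}
\sum_{\substack{n \le x \\ \Omega(n) = k}}\frac{\log n}{n} \;=\; \sum_p \log p \sum_{j=1}^{k}\frac{\mathcal{R}_{k-j}(x/p^j)}{p^j}.
\end{equation*}
Matching this to the Abel-summation expression $\mathcal{R}_k(x)\log x - \int_1^x \mathcal{R}_k(t)/t\,dt$ for the left-hand side yields an integral equation for $\mathcal{R}_k$. The terms with $j\ge 2$ on the right are bounded by convergent constants (as $\sum_p (\log p)/p^j < \infty$), while the dominant $j=1$ term $\sum_p (\log p/p)\,\mathcal{R}_{k-1}(x/p)$ is evaluated by substituting the inductive asymptotic for $\mathcal{R}_{k-1}$ and applying partial summation together with Mertens' first theorem $\sum_{p\le y}(\log p)/p = \log y + O(1)$. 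Differentiating the integral equation with respect to $\log x$ then isolates $\mathcal{R}_k$ in asymptotic form, producing $\mathcal{R}_k(x) = \sum_{j=0}^{k} c_{k,j}(\log_2 x)^j + O_k\bigl((\log_2 x)^{k-1}/\log x\bigr)$ for constants $c_{k,j}$ determined by the induction (the leading coefficients $c_{k,k}, c_{k,k-1},\ldots$ being explicitly expressible in terms of lower-level constants, with $c_{k,0}$ a new constant of integration at level $k$).

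The main obstacle is identifying the inductively-produced $c_{k,j}$ with the prescribed Taylor coefficients $\nu_{k-j}/j!$ of (\ref{nu}). Here the generating function $R_z(x) := \sum_{n \le x}z^{\Omega(n)}/n = \sum_{k\ge 0}z^k\,\mathcal{R}_k(x)$ plays the central role: assembling the inductive recurrence in powers of $z$ corresponds to the asymptotic $R_z(x) \sim \nu(z)(\log x)^z$, whose expansion in $z$ yields
\begin{equation*}
[z^k]\,\nu(z)(\log x)^z \;=\; \sum_{j=0}^{k}\frac{\nu_{k-j}}{j!}(\log_2 x)^j,
\end{equation*}
since $(\log x)^z = \sum_j z^j(\log_2 x)^j/j!$. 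The appearance of $\nu(z)$ as the correct constant traces to the Dirichlet series factorization $\sum_n z^{\Omega(n)}/n^s = \zeta(s)^z\prod_p(1-p^{-s})^z(1-z/p^s)^{-1}$, whose regular part at $s = 1$ equals $\Gamma(z+1)\nu(z)$. Concretely, one verifies that the Euler-factor contribution at each prime in the accumulated inductive constant telescopes into the product defining $\nu(z)$; this bookkeeping --- matching inductively-obtained constants of integration to the Euler product in (\ref{nu}) --- is the crux of the argument.
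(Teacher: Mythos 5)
Your route is genuinely different from the paper's. The paper first passes from $\mathcal R_k(x)$ to the ordered-factorization sum $\mathcal S_k(x)$ by an inclusion--exclusion identity (derived cleanly via Perron's formula, Proposition \ref{inclusionexclusion}) combined with a decoupling estimate for fixed prime exponents (Proposition \ref{exponenttype}); it then invokes Tenenbaum's (or Qi--Hu's) elementary asymptotic for $\mathcal S_k$ and finishes with a purely algebraic identity $R_k(X+\beta)=V_k(X)$ (Section \ref{main2}, using Lemma \ref{taylorexpand}) to identify the coefficients with those of $\nu(z)$. You instead propose a direct induction on $k$ via the von Mangoldt convolution $\log n=\sum_{d\mid n}\Lambda(d)$, which gives the correct identity
\[
\sum_{\substack{n\le x\\ \Omega(n)=k}}\frac{\log n}{n}=\sum_p\log p\sum_{j=1}^k\frac{\mathcal R_{k-j}(x/p^j)}{p^j},
\]
and your base case ($\nu_0=1$, $\nu_1=\beta$) is checked correctly. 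This is a legitimately different line of attack, closer in spirit to Landau's iterative method for $\pi_k(x)$.

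However, the two hard parts of the argument are asserted rather than proved, and they are exactly where the content of the theorem lies. First, ``differentiating the integral equation with respect to $\log x$'' is not an operation available to you: $\mathcal R_k$ is a step function, and the right-hand side is a sum of step functions, so the Abel-summation identity $\mathcal R_k(x)\log x-\int_1^x\mathcal R_k(t)\,dt/t=F(x)$ has to be solved by substituting the inductive asymptotic for $\mathcal R_{k-1}$ on the right, isolating the polynomial part, and then controlling the resulting error integrals; the $j=1$ term $\sum_p(\log p/p)\mathcal R_{k-1}(x/p)$ in particular requires a careful split (e.g.\ at $p\le\sqrt x$) because the inductive error $(\log_2(x/p))^{k-2}/\log(x/p)$ is not uniformly small as $p\to x$, and getting the final error down to $(\log_2 x)^{k-1}/\log x$ rather than something weaker is precisely what distinguishes Theorem \ref{mainthm2} from the crude consequence \eqref{crude} of Sathe--Selberg. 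Second, and more seriously, identifying the constant of integration $c_{k,0}$ with $\nu_k$ is left as a gesture toward the generating function $R_z(x)\sim\nu(z)(\log x)^z$. That asymptotic is essentially the Selberg--Delange theorem; invoking it would be circular (and non-elementary), while proving it from your inductive constants $c_{k,j}$ is exactly the bookkeeping you admit is ``the crux'' and do not carry out. In the paper this step is done rigorously by the explicit partition-sum formula for $\nu_k$ in \eqref{eq:nukpartition} and the product-rule computation showing $\mu_k=\nu_k$; something of comparable substance is needed on your end, for instance deriving a recurrence for $c_{k,0}$ from the solved integral equation and showing it agrees with the recurrence $\nu_k=\tfrac1k\sum_{j=1}^k\nu_{k-j}c_j$ of Proposition \ref{prop:recurrence}. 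As written, the proposal would not be accepted without filling these two gaps.
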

The proof of Theorem \ref{mainthm2} is completely elementary, see Section \ref{sec:perspect} for further remarks.

For comparision, classical analytic estimates imply the slightly cruder result
\begin{equation}\label{crude}
{\mathcal R}_k(x) = \sum_{j=1}^k \frac{\nu_{k-j}}{j!} (\log_2 x)^j + C_k + O_k\left(\frac{(\log_2 x)^{k}}{\log x}\right)
\end{equation}
for some (a priori unspecified) constant $C_k$.\footnote{The exponent $k$ in the error term of \eqref{crude} corrects the stated exponent $k-1$ in the published version.} Indeed, the Sathe-Selberg theorem \cite[Theorem II.6.5]{tenenbaumintro} states that, for each $0<\delta<1$, the counting function $\mathcal N_k(x)$ of $k$-almost primes is
\begin{equation}\label{selberg}
\mathcal N_k(x)=\frac{x}{\log x}\sum_{j=0}^{k-1} \frac{\nu_{k-1-j}}{j!} (\log_2 x)^j+O_\delta\left(\frac{x(\log_2 x)^k}{k!(\log x)^2}\right)
\end{equation}
uniformly for $x\ge 3$ and $k\le (2-\delta)\log_2 x$. Then by partial summation, \eqref{selberg} implies \eqref{crude} with constant coefficient
\begin{equation}\label{const}
C_k=\int_{2^k}^\infty \frac{\mathcal{E}_k(t)}{t^2}\dd{t} -  \sum_{j=1}^k \frac{\nu_{k-j}}{j!}(\log_2 2^k)^j,
\end{equation}
where $\mathcal{E}_k(t)$ is the error term in \eqref{selberg}. Similarly as with $\nu_1=\beta$ in the original proof of Mertens' second theorem, the added value of Theorem \ref{mainthm2} lies in the identification of $C_k = \nu_k$.

Moreover, though not immediately apparent from the statement itself, our proof of Theorem \ref{mainthm2} directly implies the following concise bounds, which may be of independent interest.

\begin{cor}\label{Rkbounds}
For each $k\ge 2$ there exists $x_0(k)>0$ such that
\begin{displaymath}
\frac{1}{k!}\,(\log_2 x)^k \ < \ {\mathcal R}_k(x)  \ < \ \frac{1}{k!}\,(\log_2 x+\beta)^k
\end{displaymath}
for all $x > x_0(k)$, where $\beta$ is the Mertens constant.
\end{cor}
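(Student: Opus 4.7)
The plan is to derive both inequalities from Theorem~\ref{mainthm2} by comparing polynomial asymptotics in $L := \log_2 x$, after computing the first three Taylor coefficients of $\nu(z)$. Setting $z=0$ in \eqref{nu} gives $\nu_0 = 1/\Gamma(1) = 1$ at once. Logarithmically differentiating \eqref{nu} once,
\[
\frac{\nu'(z)}{\nu(z)} = -\psi(z+1) + \sum_p \frac{1/p}{1-z/p} + \sum_p \log\left(1-\frac{1}{p}\right),
\]
and specializing at $z=0$ (using $\psi(1) = -\gamma$ together with \eqref{mertensconstant}) gives $\nu_1 = \nu'(0) = \beta$. Differentiating once more yields
\[
\frac{d^2}{dz^2}\log\nu(z)\bigg|_{z=0} = -\psi'(1) + \sum_p \frac{1}{p^2} = -\zeta(2) + P(2),
\]
where $P(s) := \sum_p p^{-s}$ is the prime zeta function; combined with the identity $(\log\nu)''(0) = \nu''(0) - (\nu'(0))^2$ (valid since $\nu(0)=1$), this produces $\nu_2 = \tfrac{1}{2}(\beta^2 + P(2) - \zeta(2))$.

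For the lower bound, Theorem~\ref{mainthm2} rewrites as
\[
\mathcal{R}_k(x) - \frac{L^k}{k!} = \frac{\beta L^{k-1}}{(k-1)!} + O_k\!\left(L^{k-2} + \frac{L^{k-1}}{\log x}\right),
\]
whose right-hand side is positive for $x$ sufficiently large since $\beta > 0$ and the error is $o(L^{k-1})$. For the upper bound I would expand by the binomial theorem and subtract:
\[
\frac{(L+\beta)^k}{k!} - \mathcal{R}_k(x) = \sum_{j=0}^{k} \frac{L^j}{j!}\left(\frac{\beta^{k-j}}{(k-j)!} - \nu_{k-j}\right) + O_k\!\left(\frac{L^{k-1}}{\log x}\right).
\]
Because $\nu_0 = 1$ and $\nu_1 = \beta$, the coefficients at $j=k$ and $j=k-1$ both vanish, so the leading surviving contribution is the $L^{k-2}$ term with coefficient $(\beta^2/2 - \nu_2)/(k-2)! = (\zeta(2) - P(2))/(2(k-2)!) > 0$, the inequality $P(2) < \zeta(2)$ being immediate as the sum for $\zeta(2)$ over all positive integers strictly exceeds its prime-indexed subsum. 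This positive $L^{k-2}$ coefficient dominates the lower-degree polynomial terms and the error $O_k(L^{k-1}/\log x)$ for $x$ sufficiently large, yielding the strict upper bound.

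The delicate aspect, and main conceptual obstacle, is that the two top-degree coefficients of $(L+\beta)^k/k!$ agree identically with those in the main term of $\mathcal{R}_k(x)$. So the upper bound is not a crude consequence of Theorem~\ref{mainthm2}; it hinges on descending to the $L^{k-2}$ coefficient and closing via the elementary prime-zeta inequality $P(2) < \zeta(2)$.
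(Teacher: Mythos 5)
Your proof is correct and follows essentially the same route as the paper: the paper deduces the corollary in a single remark from Theorem~\ref{mainthm}, observing that the degree-$(k-2)$ coefficient of $R_k(X)$ equals $\tfrac{P(2)-\zeta(2)}{2(k-2)!}<0$ (giving $R_k(X)<X^k/k!$ for $X$ large, while $\beta>0$ handles the lower bound), and your computation of $\nu_0,\nu_1,\nu_2$ via logarithmic differentiation of $\nu(z)$ reaches exactly the same sign condition $P(2)<\zeta(2)$ after shifting the polynomial by $\beta$. The only cosmetic difference is that you work from Theorem~\ref{mainthm2} and the Taylor coefficients $\nu_j$ rather than from the explicit form of $R_k$.
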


The lower bound of Corollary \ref{Rkbounds} holds when $k=1$ by Mertens' second theorem, in fact for all $x>1$ (see for instance \cite{rosser1962approximate}), while the upper bound fails to hold when $k=1$.  By a result of G.\ Robin \cite{robin}, the error term in Mertens' second theorem changes sign infinitely often.
As a consequence of Theorem \ref{explicitr2theorem} below, we find that the optimal value of $x_0(2)$ is $\exp(\exp(\sqrt{113/90}-\beta)) = 10.5998\ldots$. (See Corollary \ref{r2bounds} below.)

We note that Theorem \ref{mainthm2} generalizes Mertens' second theorem, which is the special case $k=1$. In particular, the constant coefficients $\nu_k$ may be viewed as higher analogues of the Mertens constant $\nu_1=\beta$.

We establish a recurrence for $\nu_k$ (see Proposition \ref{prop:recurrence}) which enables rapid computation of $\nu_k$ to high precision. We display results for $k\le 20$ below.
\[\begin{array}{rrrr}
k & \nu_k & k & \nu_k\\
\hline
1 & \beta=2.61497\cdot10^{-1} & 11 & 1.49365\cdot10^{-4}\\
2 & -5.62153\cdot10^{-1}   & 12 & 4.99174\cdot 10^{-5}\\
3 & 3.05978\cdot10^{-1}    & 13 & 1.82657\cdot 10^{-5}\\
4 & 2.62973\cdot10^{-2}    & 14 & 1.30241\cdot 10^{-5}\\
5 & -6.44501\cdot10^{-2}   & 15 & 5.52779\cdot 10^{-6}\\
6 & 3.64064\cdot10^{-2}    & 16 & 2.90194\cdot 10^{-6}\\
7 & -4.70865\cdot10^{-3}   & 17 & 1.45075\cdot 10^{-6}\\
8 & -4.33984\cdot10^{-4}   & 18 & 7.19861\cdot 10^{-7}\\
9 & 1.5085\cdot 10^{-3}    & 19 & 3.61606\cdot 10^{-7}\\
10 & -1.83548\cdot10^{-4}  & 20 & 1.80517\cdot 10^{-7}\\    
\end{array}\]

Though initially appearing rather erratic, the table eventually suggests $\nu_k \approx 0.189\cdot 2^{-k}$. 

In light of (or perhaps despite) this numerical evidence, we show the higher Mertens constants $\nu_k$ satisfy the following precise asymptotics.

\begin{thm}\label{nuexpand}
We have $\nu_k = \beta_2 \,2^{-k}+O(3^{-k})$, where $\beta_2 = \frac{1}{8}\prod_{p>2}\frac{(1-1/p)^2}{1-2/p} \approx 0.1893475$.

Moreover for any prime $q$, we have
\begin{align}\label{eq:expand}
\nu_k = \sum_{p<q}\beta_p \,p^{-k} \ + \ O_q(q^{-k}),
\end{align}
where $\beta_2,\beta_3,\beta_5,...$ are constants given by
\begin{align}\label{betap}
\beta_p := \frac{(1-1/p)^p}{p!}\prod_{p'\neq p}\Big(1-\frac{p}{p'}\Big)^{-1}\Big(1-\frac{1}{p'}\Big)^{p}.
\end{align}
\end{thm}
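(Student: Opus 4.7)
The plan is to exploit the meromorphic structure of $\nu(z)$ on $\mathbb{C}$. The infinite product defining $\nu(z)$ converges since $(1-z/p)^{-1}(1-1/p)^z = 1 + O(z^2/p^2)$ termwise, and each factor $(1-z/p)^{-1}$ contributes a simple pole at $z=p$. To compute the residue, factor $(1-z/p)^{-1} = -p/(z-p)$; the remaining factors are analytic at $z=p$ and evaluate there to
\[
\frac{(1-1/p)^p}{p!}\prod_{p'\neq p}\left(1-\frac{p}{p'}\right)^{-1}\left(1-\frac{1}{p'}\right)^{p} = \beta_p,
\]
so that $\mathrm{Res}_{z=p}\nu(z) = -p\,\beta_p$. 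Specializing $p=2$ already verifies $\beta_2 = \tfrac{1}{8}\prod_{p>2}(1-1/p)^2/(1-2/p)$ by direct unpacking.

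Next, I would define the rational function $F_q(z) := \sum_{p<q}\beta_p(1-z/p)^{-1}$, which shares the same poles and residues as $\nu(z)$ at each prime $p<q$. Consequently, $G_q(z) := \nu(z) - F_q(z)$ is analytic on $|z|<q$, and extends meromorphically past this disk, its nearest singularity being a simple pole at $z=q$ of residue $-q\beta_q$. Geometric expansion of each summand gives $[z^k]F_q(z) = \sum_{p<q}\beta_p\,p^{-k}$, so
\[
\nu_k \;=\; \sum_{p<q}\beta_p\,p^{-k} \;+\; [z^k]G_q(z).
\]

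The final step is to bound $[z^k]G_q(z)$ by integrating over a circle $|z|=R$ with $q < R < q^{*}$, where $q^{*}$ is the next prime after $q$. Inside this contour, $G_q(z)/z^{k+1}$ has simple poles only at $z=0$ and $z=q$, with residues $[z^k]G_q(z)$ and $-\beta_q/q^k$ respectively, so the residue theorem yields
\[
[z^k]G_q(z) \;=\; \frac{\beta_q}{q^k} \;+\; \frac{1}{2\pi i}\oint_{|z|=R}\frac{G_q(z)}{z^{k+1}}\,dz.
\]
Since $R>q$ is chosen bounded away from the poles of $G_q$, a trivial estimate $|{\cdot}|\le \sup_{|z|=R}|G_q(z)|/R^k$ gives $O_q(q^{-k})$. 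Combining with the previous display yields \eqref{eq:expand}; setting $q=3$ produces the first assertion with an absolute implicit constant.

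The conceptually delicate point, which is really the only subtlety, is the choice of $R$ strictly larger than $q$ rather than strictly smaller. A contour inside the disk of analyticity would only give $O_q(R^{-k})$ with $R<q$, which is strictly weaker than $q^{-k}$ and even blows up relative to it (since $(q/R)^k \to \infty$). Passing the contour across the simple pole at $z=q$ and accounting for its residue is precisely what sharpens the bound to the stated rate, with the residue contribution $\beta_q/q^k$ being absorbed into the error term $O_q(q^{-k})$.
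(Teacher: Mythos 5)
Your proof is correct but takes a genuinely different route from the paper's. The paper's argument is deliberately elementary: it factors $\nu(z) = G(z)C(z)$ with $G(z) = e^{(\beta-\gamma)z}/\Gamma(z+1)$ entire and $C(z) = \prod_p(1-z/p)^{-1}e^{-z/p}$, imports the analogous expansion $d_k = \sum_{p<q}\delta_p p^{-k} + O_q(q^{-k})$ for the Taylor coefficients of $C$ from Theorem~\ref{thm:smooth} (quoted from \cite{lichtman2}), and then controls the Cauchy product $\nu_k = \sum_n G^{(n)}(0)\,d_{k-n}/n!$ via the tail estimate $G^{(n)}(0)/n! \ll_m m^{-n}$ of Lemma~\ref{lem:Gk0bound}, together with the identity $\delta_p G(p)=\beta_p$ (Lemma~\ref{deltabeta}). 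You instead exploit the meromorphic structure of $\nu(z)$ directly: after verifying the residue $\mathrm{Res}_{z=p}\nu(z)=-p\beta_p$, you subtract the partial-fraction approximant $F_q$ carrying the poles at $p<q$, and bound $[z^k]G_q$ by the Cauchy coefficient formula over $|z|=R$ with $q<R<q^*$, absorbing the crossed-pole contribution $\beta_q/q^k$ into $O_q(q^{-k})$. This is arguably cleaner and more self-contained---it dispenses with the external citation and the technical Lemma~\ref{lem:Gk0bound}---at the cost of being complex-analytic rather than elementary, which runs against the paper's stated emphasis. It is also worth noting that your contour-shift argument directly addresses the point raised in Section~\ref{sec:perspect}: the paper observes that the naive residue comparison only determines the coefficients $\beta_p$ \emph{conditionally} on the existence of the expansion \eqref{eq:expand}, and declares existence to be the core of the proof; pushing the contour beyond $z=q$ and applying the trivial estimate supplies precisely that existence statement, delivering the residue identification and the error bound in one stroke.
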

The proof of Theorem \ref{nuexpand} is in Section \ref{recurrence}, also by elementary combinatorial methods. For an outline of the arguments see Section \ref{sec:perspect}.

\subsection{Finer-scale estimates for semiprimes.}
Let $k=2$. We establish a finer estimate for ${\mathcal R}_2(x)$.  Let $E(x) = \sum_{p\le x}1/p -(\log_2 x+\beta)$ denote the error term in Mertens' second theorem.

\begin{thm}\label{r2theorem}
For any $N\ge 0$ we have
\begin{displaymath}
{\mathcal R}_2(x)=\frac{1}{2}(\log_2 x+\beta)^2+\frac{P(2)-\zeta(2)}{2}+\sum_{1\le j\le N}\frac{\alpha_j}{\log^j x}+O_N\left((\log x)^{-N-1}\right),
\end{displaymath}
where $\alpha_j$ are constants given by
\begin{align}\label{eq:alphaj}
\alpha_j&:=\frac{\log^j 2}{j}\left(\frac{1}{j}-\log_2 2-\beta\right)+\int_2^\infty \frac{E(t)\log^{j-1} t}{t}\dd{t}.
\end{align}
\end{thm}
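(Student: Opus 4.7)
The plan is to split semiprime pairs by the smaller prime. For $p \le p'$ with $pp' \le x$ we have $p \le \sqrt{x}$, so with $M(y) \defeq \sum_{p \le y} 1/p$ and $M(p^-) := M(p)-1/p$ one gets $\mathcal{R}_2(x) = \sum_{p \le \sqrt{x}}(M(x/p)-M(p^-))/p$. Splitting $M(y)^2 = \sum_{p,q \le y} 1/(pq)$ by cases $p<q$, $p=q$, $p>q$ yields $\sum_{p \le y} M(p^-)/p = (M(y)^2 - P(2;y))/2$, hence
\[
\mathcal{R}_2(x) = \sum_{p \le \sqrt{x}}\frac{M(x/p)}{p} - \frac{M(\sqrt{x})^2 - P(2;\sqrt{x})}{2}.
\]
The subtracted boundary terms are handled directly via Mertens: $P(2;\sqrt{x}) = P(2) + O(x^{-1/2})$, and $M(\sqrt{x}) = L(\sqrt{x}) + E(\sqrt{x})$ with $L(t) \defeq \log_2 t + \beta$ and $E(\sqrt{x})$ super-polynomially small by the PNT.

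Next I would decompose $M(x/p) = L(x/p) + E(x/p)$. For $p \le \sqrt{x}$ the absolutely convergent expansion $L(x/p) = L(x) - \sum_{j \ge 1}(\log p)^j/(j\log^j x)$ and Fubini give
\[
\sum_{p \le \sqrt{x}}\frac{L(x/p)}{p} = L(x)M(\sqrt{x}) - \sum_{j \ge 1}\frac{S_j(\sqrt{x})}{j\log^j x}, \quad S_j(y) \defeq \sum_{p \le y}\frac{(\log p)^j}{p},
\]
and partial summation with Mertens' theorem yields
\[
S_j(y) = \frac{\log^j y}{j} + E(y)\log^j y + \log^j 2\,(L(2)-1/j) - j\int_2^y\frac{E(t)(\log t)^{j-1}}{t}\dd{t}.
\]
The algebraic identity $L(x)M(\sqrt{x}) - M(\sqrt{x})^2/2 = L(x)^2/2 - (\log 2)^2/2 + \log 2\cdot E(\sqrt{x}) - E(\sqrt{x})^2/2$ combined with $\sum_{j \ge 1}(1/2)^j/j^2 = \mathrm{Li}_2(1/2) = \zeta(2)/2 - (\log 2)^2/2$ makes the $(\log 2)^2/2$ cancel, producing the main term $\frac{1}{2}(\log_2 x+\beta)^2 + (P(2)-\zeta(2))/2$; the $\log 2 \cdot E(\sqrt{x})$ cancels with the $E(\sqrt{x})\log 2$ contribution from the $E(y)\log^j y$ piece of $S_j$.

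The remaining correction splits into (a) the sum $\sum_{j \ge 1}(\log^j 2/j)(1/j-L(2))/\log^j x$ from the constant $C_j := \log^j 2\,(L(2)-1/j)$ in $S_j$, and (b) the integral $\int_2^{\sqrt{x}} E(t)/(t\log(x/t))\dd{t}$ from the $S_j$-tail integrals after Fubini (using $1/\log(x/t) = \sum_{j \ge 1}(\log t)^{j-1}/\log^j x$, convergent for $t<x$). Expanding (b) term-by-term rewrites it as $\sum_j [\alpha_j - (\log^j 2/j)(1/j-L(2))]/\log^j x$ modulo super-polynomially small tails $\int_{\sqrt{x}}^\infty E(t)(\log t)^{j-1}/t\,\dd{t}$ (using the PNT bound $|E(t)| \ll \exp(-c\sqrt{\log t})$), so (a)+(b) collapses to $\sum_j \alpha_j/\log^j x$ modulo negligible errors. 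To obtain the claimed truncated form with error $O(\log^{-N-1}x)$, stop both elementary geometric expansions at $j=N$: the tail of (a) is $O((\log 2/\log x)^{N+1}) = O(\log^{-N-1}x)$, and the tail of (b) equals $(\log x)^{-N} \int_2^{\sqrt{x}} E(t)(\log t)^N/(t\log(x/t))\,\dd{t}$, which is $O(\log^{-N-1}x)$ since $\log(x/t) \ge \log\sqrt{x}$ for $t \le \sqrt{x}$ yields an extra $1/\log x$ factor, and $|E(t)| \ll 1/\log^A t$ for $A$ large makes the remaining integral bounded. The main obstacle is this truncation step: the $|\alpha_j|$ typically grow factorially (from the defining integral), so the formal series $\sum_j \alpha_j/\log^j x$ diverges for every fixed $x$; the delicate point is thus to truncate the elementary geometric series before summing in $j$, never forming the divergent tail directly.
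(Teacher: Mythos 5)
Your proposal is correct and follows essentially the same route as the paper's proof. The starting decomposition (your $\mathcal{R}_2(x)=\sum_{p\le\sqrt x} M(x/p)/p - (M(\sqrt x)^2 - P(2;\sqrt x))/2$) is exactly the paper's Lemma \ref{R2lemma}; the boundary pieces and the super-polynomially small contributions $E(\sqrt x)^2$, $\sum_{p\le\sqrt x}E(x/p)/p$ are handled identically, and the remaining integral $\int_2^{\sqrt x}E(t)/(t\log(x/t))\,\dd t$ is expanded termwise exactly as in the paper's Lemma \ref{integral}. The only real difference is in computing the $\sum_p L(x/p)/p$ main term: the paper performs a single partial summation against $T(t)$ and evaluates the resulting explicit integral $\int_{\log 2}^{\log\sqrt x}(\log u+\beta)/(\log x-u)\,\dd u$ in closed form via $\mathrm{Li}_2$, whereas you first expand $\log_2(x/p)=\log_2 x-\sum_{j\ge1}(\log p/\log x)^j/j$ and then apply partial summation to each $S_j$. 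Both routes end at the same $\mathrm{Li}_2(1/2)$ cancellation and the same constants $\alpha_j$, so this is a cosmetic reorganization rather than a different argument. One small remark on precision: in your truncation of the tail of (b), you should take the PNT bound on $E$ with exponent $A$ at least $N+2$ so that $\int_2^\infty |E(t)|(\log t)^N\,\dd t/t$ converges with room to spare, mirroring the paper's choice $A=2(N+1)$ in Lemma \ref{integral}; your observation that $|\alpha_j|$ grows factorially and so one must truncate before resumming is correct and is an important (if implicit) point in the paper as well.
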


As we shall prove in Lemma \ref{lem:alphaj}, the constants $\alpha_j$ can be viewed as higher analogues of the constant
\begin{displaymath}
\alpha_1 = \lim_{x\to \infty}\left(\log x - \sum_{p\le x}\frac{\log p}{p}\right) = \gamma + \sum_{m\ge 2}\sum_p \frac{\log p}{p^m} = 1.332582\ldots
\end{displaymath}
arising from a strong form of Mertens' first theorem.  See for instance \cite[Theorem 5.7]{dusart2016}.  We also determine an explicit bound for ${\mathcal R}_2(x)$.

\begin{thm}\label{explicitr2theorem}
For all $x > 1$ we have
\begin{displaymath}
\left|{\mathcal R}_2(x) \ - \ \frac{1}{2}(\log_2 x+\beta)^2-\frac{P(2)-\zeta(2)}{2}-\frac{\alpha_1}{\log x}\right| \ < \ (\log x)^{-3/2}.
\end{displaymath}
\end{thm}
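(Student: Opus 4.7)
The strategy is to take the proof of Theorem~\ref{r2theorem} at the level $N=1$ and make every intermediate estimate explicit with a tracked numerical constant, then complete the argument by a finite numerical check on a bounded initial range of $x$.

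I would begin from the identity
\begin{align*}
\mathcal{R}_2(x) \;=\; \sum_{p\le \sqrt{x}} \frac{M(x/p)}{p} \;-\; \frac{M(\sqrt{x})^2}{2} \;+\; \frac{1}{2}\sum_{p\le \sqrt{x}} \frac{1}{p^2},
\end{align*}
where $M(y) = \sum_{q\le y} 1/q$. This follows by summing $n = p_1 p_2 \le x$ over the smaller prime $p_1 \le \sqrt{x}$ and then eliminating the resulting $\sum_{p\le\sqrt{x}} M(p)/p$ via the combinatorial identity $\sum_{p\le\sqrt{x}} M(p)/p = \tfrac12\bigl(M(\sqrt{x})^2 + \sum_{p\le\sqrt{x}} 1/p^2\bigr)$. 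Inserting $M(y) = \log_2 y + \beta + E(y)$ and expanding $\log_2(x/p) = \log_2 x + \log(1 - \log p/\log x)$ as a Taylor series in $1/\log x$, the leading terms reorganize to produce $\tfrac12(\log_2 x + \beta)^2 + \tfrac12(P(2) - \zeta(2))$; the linear-order Taylor contribution $-\log p/(p\log x)$, combined with the integral $\int_2^{\sqrt{x}} E(t)/t\,dt$ arising from the $E$-part, yields precisely $\alpha_1/\log x$ after invoking Lemma~\ref{lem:alphaj} and the formula \eqref{eq:alphaj}.

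To make the remainder explicit I would feed in effective forms of Mertens' theorems---in particular the bounds of Dusart \cite{dusart2016} of the form $|E(y)| \le c_1/\log^2 y$ for $y\ge y_1$, together with explicit estimates for $\sum_{p\le y} \log^k p/p$ with $k=1,2$. Each residual contribution---the second-order Taylor terms $\log^2 p/(p\log^2 x)$, the tail $\sum_{p>\sqrt{x}} p^{-2}$, the boundary pieces near $p=\sqrt{x}$, and the truncation of $\int_2^\infty E(t)/t\,dt$---then admits an absolute bound of order $(\log x)^{-2}$. Summing the pieces shows the total remainder is strictly less than $(\log x)^{-3/2}$ for all $x\ge x_0$, for some computable threshold $x_0$.

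The main obstacle I foresee is not the asymptotic analysis but keeping $x_0$ small enough that the residual interval $1 < x \le x_0$ is within reach of a direct computer check. On that interval $\mathcal{R}_2(x)$ is piecewise constant with jumps only at semiprimes, while the right-hand side is smooth and monotonic between jumps, so it suffices to verify the inequality at each semiprime $n\le x_0$ and immediately before it. The real work is thus the careful optimization of every constant in the explicit Mertens bounds so that $x_0$ stays modest; conceptually this is routine, but the numerical bookkeeping is where the proof's length (and any lingering slack between $(\log x)^{-3/2}$ and the natural $(\log x)^{-2}$ bound) is spent.
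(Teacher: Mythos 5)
Your plan follows essentially the same route as the paper's proof: starting from the identity of Lemma \ref{R2lemma}, substituting $T(y) = \log_2 y + \beta + E(y)$, isolating the main terms and the $\alpha_1/\log x$ contribution, feeding in the explicit Rosser--Schoenfeld and Dusart bounds on $E$ (together with Lemma \ref{pn} for the prime-square tail) to control every residual piece by an explicit multiple of $(\log x)^{-2}$ or $\log_2 x/\log^2 x$, and finishing with a finite computer verification on an initial range (the paper uses $x_0 = 10^9$). The only cosmetic difference is that you propose Taylor-expanding $\log_2(x/p)$ termwise before summing, whereas the paper does partial summation on $B_1(x)$ and evaluates the resulting integral exactly in terms of $\log(1-\log 2/\log x)$ and $\mathrm{Li}_2(\log 2/\log x)$ before expanding those; both roads lead to the same $\alpha_1$ and the same error bookkeeping.
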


As a consequence we have the following bounds which make Corollary \ref{Rkbounds} explicit for the case $k=2$.

\begin{cor}\label{r2bounds}
We have
\begin{displaymath}
\frac{1}{2}\,(\log_2 x)^2 \ < \ {\mathcal R}_2(x)  \ < \ \frac{1}{2}\,(\log_2 x+\beta)^2,
\end{displaymath}
where the lower bound holds for all $x\ge 4$ and the upper bound holds for all $x\ge 11$.
\end{cor}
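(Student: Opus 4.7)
My plan is to derive both bounds from Theorem \ref{explicitr2theorem} in the large-$x$ range and verify the small-$x$ cases by direct numerical computation. A crucial reduction comes from observing that $\mathcal{R}_2(x)$ is a step function, jumping only at semiprimes, while both $\frac{1}{2}(\log_2 x)^2$ and $\frac{1}{2}(\log_2 x + \beta)^2$ are continuous and strictly increasing for $x > e$. Consequently the upper bound is tightest at $x = n$ for a semiprime $n$ (just after a jump), while the lower bound is tightest as $x \to n^-$ (just before a jump). This reduces both claims to finitely many inequalities at (or near) individual semiprimes, together with an analytic tail.

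For the tail, Theorem \ref{explicitr2theorem} yields
\begin{align*}
\mathcal{R}_2(x) - \tfrac{1}{2}(\log_2 x + \beta)^2 &< \tfrac{P(2)-\zeta(2)}{2} + \tfrac{\alpha_1}{\log x} + (\log x)^{-3/2},\\
\mathcal{R}_2(x) - \tfrac{1}{2}(\log_2 x)^2 &> \beta\log_2 x + \tfrac{\beta^2}{2} + \tfrac{P(2)-\zeta(2)}{2} + \tfrac{\alpha_1}{\log x} - (\log x)^{-3/2}.
\end{align*}
Since $P(2) - \zeta(2) < 0$, the first right-hand side is negative once $\log x$ exceeds an explicit (modest) threshold $X_0$; and the second is positive once $x$ exceeds a comparable threshold $X_1$, thanks to the dominant positive term $\beta \log_2 x$. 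Both thresholds are easily determined numerically from the values of $\beta$, $\alpha_1$, $P(2)$, and $\zeta(2)$.

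It then remains to verify the upper bound at each semiprime in $[11, X_0]$ and the lower bound at (or just below) each semiprime in $[4, X_1]$, with $\mathcal{R}_2(n)$ computed as an exact rational sum. The hardest case, and the main obstacle, is the sharp borderline at $x = 11$ for the upper bound: here $\mathcal{R}_2(11) = \mathcal{R}_2(10) = \frac{113}{180}$ (since $11$ is prime), and the required strict inequality $\frac{113}{180} < \frac{1}{2}(\log_2 11 + \beta)^2$ rearranges exactly to $11 > \exp(\exp(\sqrt{113/90} - \beta)) = 10.5998\ldots$, pinpointing the optimal threshold $x_0(2)$ mentioned in the introduction. This holds by a narrow margin and requires tight numerical control on $\beta$ and $\log\log 11$. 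By contrast, the lower-bound boundary at $x = 4$ is comfortable, with $\mathcal{R}_2(4) = 1/4 \gg \frac{1}{2}(\log_2 4)^2 \approx 0.053$, and all other intermediate semiprimes verify routinely.
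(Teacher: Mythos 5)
Your proposal is correct and takes essentially the approach the paper intends: the corollary is stated there as an immediate "consequence" of Theorem~\ref{explicitr2theorem} (with no separate proof written out), to be combined with a finite check at small $x$; your reduction via the step-function structure of $\mathcal{R}_2$, the exact value $\mathcal{R}_2(10)=113/180$, and the resulting threshold $\exp(\exp(\sqrt{113/90}-\beta))=10.5998\ldots$ all match the computation alluded to in the introduction.
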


We will prove Theorems \ref{r2theorem} and \ref{explicitr2theorem} in Section \ref{R2}. We also consider the sum ${\mathcal R}_2^*(x)$ restricted to squarefree semiprimes (i.e. products of 2 distinct primes).

\begin{cor}\label{r2squarefree}
For any $N\ge 0$, ${\mathcal R}_2^*(x)$ satisfies the estimate in Theorem \ref{r2theorem} with $P(2)/2$ replaced by $-P(2)/2$.  Moreover, we have
\begin{displaymath}
\left|{\mathcal R}_2^*(x) \ - \ \frac{1}{2}(\log_2 x+\beta)^2+\frac{P(2)+\zeta(2)}{2}-\frac{\alpha_1}{\log x}\right| \ < \ (\log x)^{-3/2},
\end{displaymath}
where the upper bound on ${\mathcal R}_2^*(x)$ holds for all $x\ge 227$ and the lower bound holds for all $x>1$.
\end{cor}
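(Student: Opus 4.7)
The plan is to derive Corollary \ref{r2squarefree} directly from Theorems \ref{r2theorem} and \ref{explicitr2theorem} by sieving out the contribution of prime squares from $\mathcal{R}_2(x)$. The key identity is
\[
\mathcal{R}_2(x) - \mathcal{R}_2^*(x) \;=\; \sum_{p^2 \le x}\frac{1}{p^2} \;=\; P(2) - T(x), \qquad T(x) \;:=\; \sum_{p > \sqrt{x}}\frac{1}{p^2},
\]
where $T(x) = O(1/\sqrt{x})$ is negligible on any logarithmic scale.

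For the asymptotic statement, I would substitute Theorem \ref{r2theorem} into $\mathcal{R}_2^*(x) = \mathcal{R}_2(x) - P(2) + T(x)$ and absorb $T(x)$ into the $O_N((\log x)^{-N-1})$ remainder. The constant simplifies to $\tfrac{1}{2}(P(2) - \zeta(2)) - P(2) = -\tfrac{1}{2}(P(2) + \zeta(2))$, which yields exactly the claimed replacement of $P(2)/2$ by $-P(2)/2$.

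For the explicit bound with $N=1$, let $A(x)$ and $B(x)$ denote the asserted approximations to $\mathcal{R}_2^*(x)$ and $\mathcal{R}_2(x)$, respectively. A direct computation gives $B(x) - A(x) = P(2)$, so
\[
\mathcal{R}_2^*(x) - A(x) \;=\; \bigl(\mathcal{R}_2(x) - B(x)\bigr) \,+\, T(x).
\]
Since $T(x) \ge 0$, the lower bound $\mathcal{R}_2^*(x) - A(x) > -(\log x)^{-3/2}$ is immediate from Theorem \ref{explicitr2theorem} and holds for all $x > 1$. The matching upper bound $\mathcal{R}_2^*(x) - A(x) < (\log x)^{-3/2}$ requires controlling $T(x)$, since its positivity consumes slack from Theorem \ref{explicitr2theorem}. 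I would bound $T(x)$ explicitly, e.g.\ via $T(x) \le 1/\sqrt{x}$ or a sharper Rosser--Schoenfeld--style estimate for $\pi(t)$ combined with partial summation, and show that $(\mathcal{R}_2(x) - B(x)) + T(x) < (\log x)^{-3/2}$ for all $x \ge 227$, with a finite numerical check clinching the precise threshold.

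I expect the main technical obstacle to be identifying the explicit threshold $x = 227$. A naive combination of Theorem \ref{explicitr2theorem} with a crude tail bound will not work uniformly for small $x$, so one must either sharpen the bound on $\mathcal{R}_2(x) - B(x)$ by revisiting the proof of Theorem \ref{explicitr2theorem} with attention to the sign of the error, or directly tabulate $\mathcal{R}_2^*(x)$ against $A(x) \pm (\log x)^{-3/2}$ at successive semiprimes up to $x = 227$ and verify the inequality first becomes valid there.
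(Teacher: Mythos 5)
Your proposal is correct and matches the paper's approach: both rest on the identity $\mathcal{R}_2^*(x)=\mathcal{R}_2(x)-\sum_{p\le\sqrt x}p^{-2}=\mathcal{R}_2(x)-P(2)+\sum_{p>\sqrt x}p^{-2}$, observe that the nonnegative tail $\sum_{p>\sqrt x}p^{-2}$ makes the lower bound inherited verbatim from Theorem \ref{explicitr2theorem} while the upper bound loses slack, and then recover the precise threshold $x\ge227$ by reusing the term-by-term estimates inside the proof of Theorem \ref{explicitr2theorem} (with the sign of the $\frac{1}{2}\sum_{p>\sqrt x}p^{-2}$ term flipped, controlled via Lemma \ref{pn}) together with a finite numerical check for small $x$. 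Your caveat that simply combining the black-box statement of Theorem \ref{explicitr2theorem} with a crude tail bound will not yield the threshold is correct, and the remedy you propose is exactly what the paper does.
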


The corollary follows from the bounds established in the proof of Theorem \ref{explicitr2theorem} together with Lemma \ref{pn} below.

We learned that constants related to $\alpha_j$ from \eqref{eq:alphaj} appear in a very recent preprint \cite{crisanerban} of Crisan and Erban who derive an asymptotic expansion for the counting function $\mathcal N_2(x)$ of semiprimes and compute the values of $B_j:=-j\alpha_j$ to high precision.  By extending their table of $B_j$ for $1\le j\le 10$, we are led to conjecture that
\begin{align}\label{eq:alphaj1j}
\frac{\alpha_{j+1}}{\alpha_j} \ \sim \ \frac{2 j^2}{j+1}.
\end{align}
Indeed, equation (3.5) in \cite{crisanerban} yields the following numerical values for $\alpha_j$ and $(\alpha_{j+1}/\alpha_j)/(2j^2/(j+1))$ for $11\le j\le 20$, computed in Pari/GP.

\[\begin{array}{ccc}
j & \alpha_j & (\alpha_{j+1}/\alpha_j)/(2j^2/(j+1))\\
\hline
11 & 3.4791\cdot10^{8} & 0.98998\\
12 & 6.9638\cdot10^{9} & 0.99253\\
13 & 1.5342\cdot10^{11} & 0.99443\\
14 & 3.6886\cdot10^{12}  & 0.99585\\
15 & 9.6096\cdot10^{13} & 0.99691\\
16 & 2.6965\cdot10^{15}   & 0.99769\\
17 & 8.1071\cdot10^{16} & 0.99828\\
18 & 2.5999\cdot10^{18}     & 0.99871\\
19 & 8.8587\cdot 10^{19} & 0.99904\\
20 & 3.1957\cdot10^{21}    & 0.99928\\    
\end{array}\]

\begin{remark}
In private communication, Ofir Gorodetsky has given a sketch proof that
\begin{align*}
\alpha_j \sim (j-1)!\frac{2^{j-1}}{j},
\end{align*}
which immediately implies \eqref{eq:alphaj1j}. The basic approach is to use the representation for $\alpha_j$ obtained in Lemma \ref{lem:alphaj}, and apply the von Mangoldt explicit formula (with some care). Full details will be provided in forthcoming work.
\end{remark}

We note that the work of Crisan and Erban \cite{crisanerban} may suggest an alternate approach to Theorem \ref{r2theorem}.  Conversely, Theorem \ref{r2theorem} can be used to give a new proof of Crisan and Erban's asymptotic expansion of $\mathcal{N}_2(x)$.

\begin{thm}[Theorem 2.5 \cite{crisanerban}]\label{N2}
For any $N\ge 0$ we have 
\begin{displaymath}
\mathcal{N}_2(x) = \frac{x}{\log x}\sum_{n=0}^{N-1} n!\,\frac{\log_2 x + D_n}{\log^n x} + O_N\left(\frac{x\log_2 x}{\log^{N+1} x}\right),
\end{displaymath}
for constants $D_n = \sum_{j=0}^n B_j/j! - H_n$, where $H_n$ is the $n$-th harmonic number, $B_0=\beta$, and $B_j=-j\alpha_j,~j\ge 1$, for $\alpha_j$ as in \eqref{eq:alphaj}.
\end{thm}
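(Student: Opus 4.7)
The plan is to derive the asymptotic for $\mathcal{N}_2(x)$ from Theorem~\ref{r2theorem} via Abel summation. Since $\mathcal{R}_2(t) = 0$ for $t<4$, Stieltjes integration by parts gives
\[
\mathcal{N}_2(x) \;=\; \int_{4^-}^{x} t\, d\mathcal{R}_2(t) \;=\; x\mathcal{R}_2(x) - \int_4^x \mathcal{R}_2(t)\, dt.
\]
Writing $f(t)$ for the main term in Theorem~\ref{r2theorem}, the remainder $O_N(\log^{-N-1} t)$ integrates to $O_N(x/\log^{N+1} x)$, which fits inside the claimed error. So it suffices to asymptotically expand $xf(x) - \int_4^x f(t)\,dt$ in descending powers of $\log x$.

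Next, I would process each summand of $f$ separately. The key piece is the quadratic $\tfrac{1}{2}(\log_2 t+\beta)^2 = \tfrac{1}{2}\log_2^2 t + \beta\log_2 t + \tfrac{1}{2}\beta^2$. Integration by parts with $u=\log_2^k t$, $dv=dt$ reduces the first two summands to $\int \log_2 t/\log t\,dt$; a second IBP with $u=\log_2 t$, $dv=dt/\log t$ produces $\log_2 x\cdot\mathrm{li}(x)$ minus the inner integral $\int \mathrm{li}(t)/(t\log t)\,dt$. Using $\mathrm{li}(x)=\sum_{n\ge 0} n!\, x/\log^{n+1} x + O(x/\log^{N+1}x) + \text{const}$, term-by-term integration combined with the identity $\sum_{n=1}^{r}(n-1)!\,r!/n! = r!\,H_r$ yields
\[
\int_4^x \frac{\mathrm{li}(t)}{t\log t}\, dt \;=\; \sum_{n\ge 0} n!\,H_n\,\frac{x}{\log^{n+1} x} + O\!\left(\frac{x}{\log^{N+1} x}\right) + \text{const},
\]
which explains the appearance of the harmonic numbers $H_n$. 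For each remaining piece $\alpha_j/\log^j t$, iterating $\int dt/\log^j t = t/\log^j t + j\int dt/\log^{j+1} t$ yields $\int_4^x dt/\log^j t = \sum_{m\ge 0} \frac{(j+m-1)!}{(j-1)!}\, x/\log^{j+m}x + \text{const} + O$, whose contribution to the coefficient of $x/\log^{n+1} x$ in $xf(x)-\int f$ works out to $-\alpha_j\,n!/(j-1)! = n!\,B_j/j!$ after invoking $B_j=-j\alpha_j$.

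Collecting contributions at $x/\log^{n+1} x$: the $\tfrac{1}{2}\log_2^2 t$ piece yields $n!(\log_2 x - H_n)$; the $\beta\log_2 t$ cross-term yields $n!\beta = n!\,B_0$; and the $\alpha_j$'s with $1\le j\le n$ contribute $n!\sum_{j=1}^n B_j/j!$. Summing produces $n!(\log_2 x + D_n)$ with $D_n=\sum_{j=0}^n B_j/j! - H_n$, matching the claim. The $\tfrac{1}{2}\beta^2$ and $(P(2)-\zeta(2))/2$ constants, together with stray constants from the lower limit $t=4$, contribute only $O(1)$ in $\mathcal{N}_2(x)$ and are absorbed into the error.

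The main obstacle is the combinatorial bookkeeping at each power of $\log x$: identifying how the harmonic number $H_n$ emerges from iterated IBP via the inner integral $\int \mathrm{li}(t)/(t\log t)\,dt$, and verifying that the weights $(j+m-1)!/(j-1)!$ from the $\alpha_j$-expansions recombine cleanly into $n!\,B_j/j!$ using $B_j=-j\alpha_j$.
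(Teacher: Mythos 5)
Your proposal is correct and follows essentially the same route as the paper's own proof: partial summation $\mathcal{N}_2(x)=x\mathcal{R}_2(x)-\int_4^x\mathcal{R}_2(t)\,dt$, substitution of Theorem~\ref{r2theorem}, reduction via iterated integration by parts to $\log_2 x\cdot\mathrm{li}(x)-\int_4^x\mathrm{li}(t)/(t\log t)\,dt$ plus $\beta\,\mathrm{li}(x)$ plus the $\alpha_j$-contributions, and then term-by-term expansion of $\mathrm{li}(x)$ and $\int dt/\log^{j}t$ in descending powers of $\log x$. Your explicit identification of the harmonic number $H_n$ as arising from the inner integral $\int\mathrm{li}(t)/(t\log t)\,dt$ via the identity $\sum_{n=1}^{r}(n-1)!\,r!/n!=r!\,H_r$ is exactly the bookkeeping the paper performs (compare the double sum $\sum_{n}\frac{1}{n+1}\sum_{m>n}m!\,x/\log^{m+1}x$ in its proof), and the recombination of the weights $(j+m-1)!/(j-1)!$ into $n!\,B_j/j!$ via $B_j=-j\alpha_j$ is likewise the same step.
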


The new proof of Theorem \ref{N2} is provided in Section \ref{sec:N2}.  It relies on a strong form of the prime number theorem, which implies the estimate
\begin{displaymath}
E(x)\ll_A (\log x)^{-A}~\text{for all}~A>0.
\end{displaymath}
We will make use of this estimate throughout the paper.  We remark that it follows from a version of the prime number theorem established by elementary methods.  See for instance \cite[Sec.\ 8]{diamond}.

\section{Outline and perspectives on the argument}\label{sec:perspect}

Our proof for ${\mathcal R}_k(x)$ proceeds by building on recent work of Tenenbaum \cite{tenenbaum} and Qi--Hu \cite{qihu} for the related sum
\begin{align}
{\mathcal S}_k(x) := \mathop{\sum_{p_1\cdots p_k\le x}}\frac{1}{p_1\cdots p_k} = \sum_{\substack{\Omega(n)=k\\ n\le x}}\frac{{\rm f}(n)}{n},
\end{align}
where ${\rm f}(n)$ denotes the number of ordered prime factorizations of $n$. Recall \eqref{eq:Rkx} for comparision.

\begin{thm}[Theorem 1 \cite{tenenbaum}]\label{tenenbaumthm}
For $x\ge3$,
\begin{equation}\label{tenenbaumsresult}
{\mathcal S}_k(x) = S_k(\log_2 x) + O_k\left(\frac{(\log_2 x)^{k-1}}{\log x}\right),
\end{equation}
where $S_k(X)=\sum_{0\le j\le k}\lambda_{j,k}X^j$ is a monic polynomial of degree $k$ and
\begin{equation}\label{lambda}
\lambda_{j,k} = \sum_{m=0}^{k-j}\binom{k}{m,j,k-m-j} (\beta-\gamma)^{k-m-j}\left(\frac{1}{\Gamma}\right)^{(m)}(1).
\end{equation}
\end{thm}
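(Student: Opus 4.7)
The plan is to proceed by induction on $k$. The base case $k=1$ is Mertens' second theorem, which gives $S_1(X)=X+\beta$. For the inductive step I would exploit the ordered-factorization recursion
\[
{\mathcal S}_k(x) \;=\; \sum_{p\le x}\frac{1}{p}\,{\mathcal S}_{k-1}(x/p),
\]
which follows immediately from the definition, and substitute the inductive estimate ${\mathcal S}_{k-1}(y) = S_{k-1}(\log_2 y)+O_k((\log_2 y)^{k-2}/\log y)$ valid for $y$ not too small. Primes $p>\sqrt{x}$ can be controlled separately via the crude bound ${\mathcal S}_{k-1}(y)\ll_k (\log_2 y)^{k-1}$ combined with Mertens, and their total contribution is within the error term $(\log_2 x)^{k-1}/\log x$.

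The remaining range $p\le \sqrt x$ leaves us to estimate $\sum_{p\le\sqrt x}S_{k-1}(\log_2(x/p))/p$. I would evaluate this by partial summation against the strong form of Mertens $\sum_{p\le t}1/p=\log_2 t+\beta+E(t)$ with $E(t)\ll_A (\log t)^{-A}$. The contribution of $E(t)$ integrates (with one integration by parts) into the admissible error, while the main integral
\[
\int_2^{\sqrt x}\frac{S_{k-1}(\log_2(x/t))}{t\log t}\,dt
\]
can be evaluated via the substitution $u=\log(x/t)/\log x$, expanding $\log_2(x/t)=\log_2 x+\log(1-u)$ in Taylor series and integrating termwise. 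This produces a polynomial in $\log_2 x$ of degree $k$ plus an error of the target size; the recursion bumps the degree from $k-1$ to $k$ and shows $S_k$ is monic with leading coefficient $\lambda_{k,k}=1/k!\cdot k! = 1$ consistent with the stated form.

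The main obstacle is matching the emerging polynomial with the explicit formula \eqref{lambda}. At each of the $k$ iterations, the expansion of $\log(1-u)$ and the Taylor expansion $1/\Gamma(1+z)=\sum_{m\ge 0}(1/\Gamma)^{(m)}(1)\,z^m/m!$ combine so that each factorization slot contributes in one of three ways: a factor of $\beta-\gamma$ (the $-\gamma$ arising through $(1/\Gamma)'(1)=\gamma$ together with Mertens' constant $\beta$), a factor of $\log_2 x$ from the leading term of the substitution, or a higher $(1/\Gamma)^{(m)}(1)$-derivative factor. The trinomial coefficient $\binom{k}{m,j,k-m-j}$ in \eqref{lambda} then records the number of ways to distribute these three roles among the $k$ slots. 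A cleaner route that avoids this combinatorial bookkeeping is to identify $S_k$ at the level of generating functions: the formal identity $\sum_{k\ge 0}{\mathcal S}_k(x)z^k=\sum_{n\le x}z^{\Omega(n)}/n$ compared against the local Euler factor $\prod_p(1-z/p)^{-1}$ produces, through a Selberg--Delange-style expansion, a generating function for $S_k(X)$ matching \eqref{lambda} on sight; this identification is in fact essentially the route taken in \cite{tenenbaum}.
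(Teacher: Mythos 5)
This statement is Theorem~1 of Tenenbaum's paper \cite{tenenbaum}; the present paper does not prove it but cites it as a black box (two proofs are referenced: Tenenbaum's Selberg--Delange argument and Qi--Hu's elementary hyperbola-method argument \cite{qihu}). The only thing the present paper proves about it is, in Section~\ref{qihutenenb}, the algebraic equivalence of the coefficients $\lambda_{j,k}$ in \eqref{lambda} with the recursively-defined coefficients from \eqref{qihuresult}. So there is no ``paper's own proof'' to compare against, and your sketch should be judged on its own terms.

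Your plan (induction via ${\mathcal S}_k(x)=\sum_{p\le x}{\mathcal S}_{k-1}(x/p)/p$, then partial summation against a strong form of Mertens, then unwind the combinatorics to match \eqref{lambda}) is in spirit close to Qi--Hu's elementary route, and your final remark that the generating-function / Selberg--Delange route is ``essentially the route taken in \cite{tenenbaum}'' is accurate. However, there is a genuine gap in the step where you discard the range $p>\sqrt x$. You claim its contribution is $O_k\big((\log_2 x)^{k-1}/\log x\big)$, citing the crude bound ${\mathcal S}_{k-1}(y)\ll_k(\log_2 y)^{k-1}$ together with Mertens. But Mertens gives $\sum_{\sqrt x<p\le x}1/p=\log 2+O(1/\log x)$, a \emph{constant}, and a partial-summation estimate shows
\[
\sum_{\sqrt x<p\le x}\frac{{\mathcal S}_{k-1}(x/p)}{p} \ \asymp\ \int_{2^{\,k-1}}^{\sqrt x}\frac{{\mathcal S}_{k-1}(v)}{v\log(x/v)}\,dv \ \asymp\ \frac{1}{\log x}\int_{2^{\,k-1}}^{\sqrt x}\frac{(\log_2 v)^{k-1}}{v}\,dv \ \asymp\ (\log_2 x)^{k-1},
\]
which is larger than your claimed error by a full factor of $\log x$. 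This term is therefore not negligible: it contributes to the $X^{k-1}$ coefficient $\lambda_{k-1,k}$ of $S_k(X)$ and cannot be thrown away. (The intuition from Proposition~\ref{exponenttype} in the paper, where a tail past $x^{1/(a+1)}$ is harmless, does not transfer here because there the prime appears to a power $a\ge 2$ and Lemma~\ref{pn} applies; here the exponent is $1$.) The fix is to apply the inductive estimate over the full range $p\le x/2^{k-1}$ (for larger $p$ the inner sum ${\mathcal S}_{k-1}(x/p)$ vanishes outright), and only then perform partial summation over that whole range; the error propagation $\sum_p p^{-1}(\log_2(x/p))^{k-2}/\log(x/p)\ll_k(\log_2 x)^{k-1}/\log x$ does work out over that range. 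A minor slip: you should take $u=\log t/\log x$, not $u=\log(x/t)/\log x$, to get the stated expansion $\log_2(x/t)=\log_2 x+\log(1-u)$. Beyond this, matching the resulting polynomial coefficients to \eqref{lambda} is nontrivial bookkeeping which you acknowledge but do not carry out; given that the paper itself devotes Section~\ref{qihutenenb} to precisely this kind of coefficient identification (between the Qi--Hu and Tenenbaum forms), that step should not be dismissed as routine.
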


Tenenbaum's analytic proof employs the Selberg--Delange method. However, Qi and Hu \cite{qihu} have recently obtained an elementary proof via Dirichlet's hyperbola method.

\begin{thm}[Theorem 1.3 \cite{qihu}]\label{qihuthm}
For $x\ge3$,
\begin{equation}\label{qihuresult}
{\mathcal S}_k(x) = \sum_{j=0}^k \binom{k}{j} a_{k-j}(\log_2 x+\beta)^j + O_k\left(\frac{(\log_2 x)^{k-1}}{\log x}\right),
\end{equation}
where the coefficients $a_j$ are defined recursively by $a_0=1, a_1=0$, and 
\begin{align*}
a_j = \sum_{i=1}^{j-1}(-1)^i \binom{j-1}{i} i! \zeta(i+1)a_{j-1-i}.
\end{align*}
\end{thm}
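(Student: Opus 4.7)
The plan is by induction on $k$. The base case $k=1$ is Mertens' second theorem itself, which gives ${\mathcal S}_1(x)=\log_2 x+\beta+O(1/\log x)$ and matches \eqref{qihuresult} with $a_0=1$, $a_1=0$. For the inductive step, I start from the recursive identity
\[{\mathcal S}_k(x)\;=\;\sum_{p\le x}\frac{{\mathcal S}_{k-1}(x/p)}{p},\]
obtained by factoring out the first prime from each ordered tuple $(p_1,\dots,p_k)$ whose product is at most $x$.

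Substituting the inductive hypothesis for ${\mathcal S}_{k-1}(x/p)$ and using the decomposition $\log_2(x/p)+\beta=(\log_2 x+\beta)+\log(1-\log p/\log x)$, a binomial expansion reduces the task to evaluating the auxiliary sums
\[T_i(x)\;:=\;\sum_{p\le x}\frac{\log^i(1-\log p/\log x)}{p}\qquad(0\le i\le k-1).\]
The case $i=0$ is just Mertens' second theorem. For $i\ge 1$, I would apply Abel summation using the strong Mertens estimate $E(t)\ll_A(\log t)^{-A}$, then make the change of variable $u=\log p/\log x$; the main term becomes $\int_0^1\log^i(1-u)/u\,du$, which via the substitution $v=1-u$ and the classical identity $\int_0^1(-\log v)^i/(1-v)\,dv=i!\,\zeta(i+1)$ evaluates to $T_i(x)=(-1)^i\,i!\,\zeta(i+1)+O_k(1/\log x)$.

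Collecting contributions, the coefficient of $(\log_2 x+\beta)^\ell$ in the resulting expansion picks up $\binom{k-1}{\ell-1}a_{k-\ell}$ from the $T_0$ term, together with contributions of the form $(-1)^i i!\,\zeta(i+1)\binom{k-1}{\ell+i}\binom{\ell+i}{i}a_{k-1-\ell-i}$ from $T_i$ for $i\ge 1$. To match the target coefficient $\binom{k}{\ell}a_{k-\ell}$ in \eqref{qihuresult}, I would apply Pascal's rule $\binom{k}{\ell}-\binom{k-1}{\ell-1}=\binom{k-1}{\ell}$ and the trinomial identity $\binom{k-1}{\ell+i}\binom{\ell+i}{i}=\binom{k-1}{\ell}\binom{k-1-\ell}{i}$; writing $j=k-\ell$, the required identity then collapses exactly to the defining recurrence
\[a_j\;=\;\sum_{i=1}^{j-1}(-1)^i\binom{j-1}{i}i!\,\zeta(i+1)\,a_{j-1-i},\]
completing the induction.

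The main technical obstacle I anticipate is the singular behaviour of $\log(1-\log p/\log x)$ and the blow-up of the inductive error $(\log_2(x/p))^{k-2}/\log(x/p)$ as $p\to x$. The standard remedy is to split the summation over $p$ at an intermediate scale such as $\sqrt{x}$: on the lower range the Taylor expansion of $\log_2(x/p)$ about $\log_2 x$ is well-controlled and the inductive hypothesis yields an error of the desired size $O_k((\log_2 x)^{k-1}/\log x)$, while on the upper range one exploits the fact that ${\mathcal S}_{k-1}(x/p)$ is small for $x/p$ close to $1$, together with $\sum_{\sqrt{x}<p\le x}1/p=\log 2+O(1/\log x)$, to absorb the remaining contribution.
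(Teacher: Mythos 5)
The paper does not actually prove this statement; it cites it verbatim as Theorem~1.3 of Qi and Hu, who (as the paper notes) established it elementarily via Dirichlet's hyperbola method, and the paper only proves in Section~\ref{qihutenenb} that the Qi--Hu and Tenenbaum formulations agree. Your argument is therefore a self-contained alternative. It runs on a Buchstab-type recursion ${\mathcal S}_k(x)=\sum_{p\le x}{\mathcal S}_{k-1}(x/p)/p$ plus induction on $k$, which is structurally much closer to the treatment of the smooth variant $\mathcal P_k$ in \cite{lichtman2} (the approach the paper itself invokes as motivation for Theorem~\ref{mainthm}) than to the cited hyperbola decomposition.

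The combinatorial core checks out. Writing $L=\log_2 x+\beta$ and $M_p=\log(1-\log p/\log x)$, the coefficient of $L^\ell$ after expansion is $\binom{k-1}{\ell-1}a_{k-\ell}+\sum_{i\ge1}\binom{k-1}{\ell+i}\binom{\ell+i}{i}(-1)^i i!\,\zeta(i+1)\,a_{k-1-\ell-i}$, and after Pascal's rule together with $\binom{k-1}{\ell+i}\binom{\ell+i}{i}=\binom{k-1}{\ell}\binom{k-1-\ell}{i}$, matching this against $\binom{k}{\ell}a_{k-\ell}$ reduces (with $j=k-\ell$) exactly to the stated recurrence. The integral identity $\int_0^1(-\log v)^i/(1-v)\,dv=i!\,\zeta(i+1)$ is also correct.

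Two details to tighten. First, the error in $T_i(x)$ is $O_k\big((\log_2 x)^i/\log x\big)$, not $O_k(1/\log x)$: the sum over $p$ is effectively cut off at $p\le x/2^{k-1}$, and the missing tail of the limiting integral near $u=1$ contributes $\asymp\epsilon|\log\epsilon|^i$ with $\epsilon\asymp 1/\log x$. This is harmless, since $T_i$ multiplies $L^{j-i}$ with $j\le k-1$, so the total error still fits in $O_k\big((\log_2 x)^{k-1}/\log x\big)$. Second, the closing remark that the range $\sqrt{x}<p\le x$ can be ``absorbed'' using $\sum_{\sqrt{x}<p\le x}1/p=\log2+O(1/\log x)$ is misleading as stated: that range contributes a quantity of order $(\log_2 x)^{k-1}$ to the polynomial main term, not merely to the error, and cannot be discarded. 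What the split at $\sqrt{x}$ actually controls is (i) the inductive error $\sum_p(\log_2(x/p))^{k-2}/(p\log(x/p))$ and (ii) the sum-versus-integral discrepancy, both of which are indeed $O_k\big((\log_2 x)^{k-1}/\log x\big)$, while the main terms from both ranges are captured together by the full Stieltjes integral $\int\log^i(1-u)\,u^{-1}\,du$. With these adjustments your proof goes through.
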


The results of Tenenbaum and Qi--Hu generalize those of Popa \cite{popadouble,popatriple} in the cases $k=2$ and $k=3$ (as well as Mertens when $k=1$). Though it is not immediately clear that the expansions in Theorems \ref{tenenbaumthm} and \ref{qihuthm} are equivalent, we will show this in Section \ref{qihutenenb}.

In the sum ${\mathcal S}_k(x)$, terms can appear up to $k!$ times due to permutation of the prime factors. We obtain the following estimate for ${\mathcal R}_k(x)$, which counts each term precisely once.  

\begin{thm}\label{mainthm}
For $x\ge3$, we have
\begin{displaymath}
{\mathcal R}_k(x) = R_k(\log_2 x+\beta)+O_k\left(\frac{(\log_2 x)^{k-1}}{\log x}\right),
\end{displaymath}
where $R_k$ is the polynomial of degree $k$ defined by
\begin{displaymath}
R_k(X) = \sum_{n_1+2n_2+\ldots+kn_k=k}\sum_{i=0}^{n_1} \frac{a_{i,{n_1}}}{n_1!}\prod_{j=2}^k\left(\frac{P(j)}{j}\right)^{n_j}\frac{X^j}{n_j!},
\end{displaymath}
where the sum ranges over all integer partitions of $k$, $P(j)=\sum_p p^{-j}$ is the prime zeta function, and the coefficients $a_{i,n}$ are given by
\begin{displaymath}
a_{i,n}=\sum_{m=0}^{n-i}{{n}\choose{m,i,n-m-i}}(-\gamma)^{n-m-i}\left(\frac{1}{\Gamma}\right)^{(m)}(1).
\end{displaymath}
\end{thm}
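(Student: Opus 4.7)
The strategy is to derive $\mathcal{R}_k$ from Qi--Hu's estimate for $\mathcal{S}_k$ (Theorem~\ref{qihuthm}) by unwinding $\mathcal{S}_k$'s over-counting: in $\mathcal{S}_k$ each $k$-almost prime $n$ is weighted by $k!/\prod_j(j!)^{n_j}$, the number of ordered prime factorizations, where $(n_1,\ldots,n_k)$ is the \emph{multiplicity type} of $n$, meaning $n_j$ primes appear with multiplicity exactly $j$. Stratifying $\mathcal{R}_k$ by multiplicity type and letting $U_\lambda(x)$ denote the sum of $1/n$ over $n\le x$ of type $\lambda$, we have $\mathcal{R}_k(x)=\sum_\lambda U_\lambda(x)$ with $\lambda$ ranging over integer partitions of $k$. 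Each such $n$ factors uniquely as $n=ms$ with $m$ squarefree ($\omega(m)=n_1$), $s$ powerful ($\omega(s)=\sum_{j\ge 2}n_j$), and $\gcd(m,s)=1$. Since $\sum_{s\,\text{powerful}}1/s$ converges absolutely, I would truncate the outer sum at $s\le(\log x)^A$ at negligible cost, then drop the coprimality condition and replace $m\le x/s$ by $m\le x$ at acceptable cost. The inner squarefree sum becomes $\tfrac{1}{n_1!}$ times the ordered sum over distinct primes $p_1,\ldots,p_{n_1}$ with $\prod p_i\le x$.

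Next I would relate this distinct-prime sum to $\mathcal{S}_{n_1}(x)$ via M\"obius-style inversion on the partition lattice $\Pi_{n_1}$: decomposing ordered $n_1$-tuples of primes by their equality pattern gives $\mathcal{S}_{n_1}(x)=\sum_{\pi\in\Pi_{n_1}}T_{\lambda(\pi)}(x)$, where $T_\lambda$ denotes the sum of $\prod_i p_i^{-a_i}$ over ordered distinct-prime tuples of type $\lambda$. Solving for the finest-partition term $T_{(n_1,0,\ldots,0)}(x)$ expresses the distinct-prime sum as $\mathcal{S}_{n_1}(x)$ plus lower-order corrections involving $\mathcal{S}_{n_1-j}(x)$ multiplied by prime-zeta values $P(j)$. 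Applying Theorem~\ref{qihuthm} to each $\mathcal{S}_{n_1'}$ gives polynomial expansions in $X=\log_2 x+\beta$; after the base-change from the Qi--Hu to the Tenenbaum form (cf.\ Section~\ref{qihutenenb}), the coefficients reorganize into the $a_{i,n_1}$ appearing in the statement.

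Finally, summing over the powerful $s$ and collecting across partitions $\lambda$ assembles the polynomial $R_k(X)$. The principal obstacle is the combinatorial bookkeeping: the prime-zeta corrections arising from M\"obius inversion must combine with the outer sum over powerful $s$ to produce precisely the factor $\prod_{j\ge 2}(P(j)/j)^{n_j}/n_j!$ appearing in $R_k$, with the critical factor $1/j$ --- an artifact of the underlying generating-function identity $\log\prod_p(1-z/p)^{-1}=\sum_{j\ge 1}z^j P(j)/j$ --- rather than merely $P(j)^{n_j}$. Once this bookkeeping is verified, the error term $O_k((\log_2 x)^{k-1}/\log x)$ is inherited directly from Theorem~\ref{qihuthm}.
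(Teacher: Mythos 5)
Your proposal takes a genuinely different route from the paper's. The paper does \emph{not} stratify $\mathcal{R}_k$ by multiplicity type with distinct primes. Instead (Propositions~\ref{inclusionexclusion} and~\ref{exponenttype}), it applies Perron's formula together with the cycle-index identity for the almost-prime zeta function,
\[
\sum_{\Omega(n)=k}n^{-s} \ = \ \sum_{n_1+2n_2+\cdots=k}\ \prod_{j\ge 1}\frac{1}{n_j!}\left(\frac{P(js)}{j}\right)^{n_j},
\]
an exact Dirichlet-series identity (cited from \cite[Proposition 3.1]{lichtman}) in which the implicit prime tuples are \emph{unconstrained}. This delivers the $(P(j)/j)^{n_j}/n_j!$ coefficient structure directly, before any analytic estimation, and reduces the problem to a single constraint-relaxation step (Proposition~\ref{exponenttype}), after which Tenenbaum's $S_{n_1}(\log_2 x)$ is substituted.

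The gap in your proposal is precisely the step you flag as the ``principal obstacle,'' and it is real, not merely a matter of bookkeeping. Once you stratify by multiplicity type, every sum you form carries distinctness constraints: the $n_1$ squarefree primes are mutually distinct (handled by your $\Pi_{n_1}$ inversion), but the primes appearing in the powerful part $s$ must also be mutually distinct, and distinct from the squarefree ones; dropping coprimality treats the last constraint but not the first. Consequently the powerful-part sum is \emph{not} $\prod_{j\ge 2}(P(j)/j)^{n_j}/n_j!$ --- for instance with $(n_2,n_3)=(1,1)$ the distinct-prime sum $\sum_{q\ne r}q^{-2}r^{-3}=P(2)P(3)-P(5)$ differs from $P(2)P(3)$ --- and the corrections generated by your $\Pi_{n_1}$-inversion (terms $\mathcal{S}_{n_1-j}(x)\cdot P(\cdot)$) must be re-summed jointly with the outer sum over multiplicity types to reproduce the single cycle-index partition sum appearing in $R_k$. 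This re-summation is exactly a derivation of the cycle-index identity, and it is not carried out. A further unaddressed point: your M\"obius-inverted terms $V(\sigma)$ carry mixed summation constraints ($\prod q_i^{j_i}\le x$ with several exponents $j_i$), so to factor out $\mathcal{S}$ and $P(\cdot)$ you still need a constraint-replacement lemma of the form the paper proves in Proposition~\ref{exponenttype}; that lemma is not derivable from Theorem~\ref{qihuthm} alone. In short, the endpoint is plausibly reachable, but the combinatorial identity your approach requires is the heart of the matter, not an afterthought, and is not supplied.
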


When combined with the Weierstrass product formula (see \eqref{WeierstrassGamma}), Theorem \ref{mainthm} yields the following special cases:
\begin{align*}
{\mathcal R}_2(x) & = \frac{1}{2}\left(\log_2 x+\beta\right)^2 +  \frac{P(2)-\zeta(2)}{2} + E_2(x),\\
{\mathcal R}_3(x) & = \frac{1}{6}(\log_2 x+\beta)^3+\frac{P(2)-\zeta(2)}{2}(\log_2 x+\beta)+\frac{P(3)+\zeta(3)}{3}+E_3(x),\\
{\mathcal R}_4(x) &= \frac{1}{24}(\log_2 x+\beta)^4+\frac{P(2)-\zeta(2)}{4}(\log_2 x+\beta)^2 +E_4(x)\\
&\quad +\frac{P(3)+\zeta(3)}{3}(\log_2 x+\beta)+\frac{P(4)}{4}+\frac{\zeta(4)}{16} +\frac{P(2)^2}{8}-\frac{P(2)\zeta(2)}{4}.
\end{align*}
Here $\zeta$ is the Riemann zeta function and
\begin{displaymath}
E_k(x):={\mathcal R}_k(x) - R_k(\log_2 x+\beta)\ll_k (\log_2 x)^{k-1}/\log x.
\end{displaymath}

We observe that since $P(2)-\zeta(2)<0$, Theorem \ref{mainthm} implies Corollary \ref{Rkbounds}.

Recall Theorem \ref{mainthm2} states ${\mathcal R}_k(x) = V_k(\log_2 x) + O_k((\log_2 x)^{k-1}/\log x)$ for the polynomial
\begin{equation}\label{V}
V_k(X) := \sum_{j=0}^k \frac{\nu_{k-j}}{j!}X^j.
\end{equation}

In Sections \ref{main}-\ref{main2}, we shall establish Theorem \ref{mainthm} from Tenenbaum's result by an elementary combinatorial argument. By a similar argument we prove the polynomial identity
\begin{align}
R_k(X+\beta)=V_k(X)
\end{align}
which gives the equivalence of Theorems \ref{mainthm} and \ref{mainthm2}. This allows us to obtain Theorem \ref{mainthm2}, thus implying that $C_k=\nu_k$. Hence, when combined with the hyperbola method in Theorem \ref{qihuthm}, the proof of Theorems \ref{mainthm2} and \ref{mainthm} are completely elementary in nature.

As the Sathe--Selberg theorem \eqref{selberg} is proven by contour integration, Theorem \ref{mainthm2} represents a success of elementary methods at matching the strength of results with those of more sophisticated analytic tools. One may wonder if similar elementary methods could be used to establish the Sathe--Selberg theorem itself.

\subsection{Outline of Theorem \ref{nuexpand}}

We note that the proof methods in Theorem \ref{mainthm} to handle $\mathcal R_k(x)$ are motivated by those of \cite[Theorem 2]{lichtman2} for the smooth variant
\begin{align}
\mathcal P_k(x) :=\sum_{\substack{\Omega(n)=k\\P^+(n)\le x}}\frac{1}{n},
\end{align}
where $P^+(n)$ is the largest prime factor of $n$. As noted, the proof of Theorem \ref{mainthm} proceeds by passing from $\mathcal R_k(x)$ to $\mathcal S_k(x)$. Whereas, the analogous method for $\mathcal P_k(x)$ may be applied directly, and is quite natural in the smooth setting.

Theorems 1.2 and 3.5, as well as (2.3), in \cite{lichtman2} give the following.

\begin{thm}[\cite{lichtman2}]\label{thm:smooth}
For each fixed $k\ge1$, we have
\begin{align*}
\mathcal P_k(x) \ = \ \sum_{j=0}^k\frac{d_{k-j}}{j!}\big(\log_2 x + \beta\big)^j \ & + \ O_k\Big(\frac{(\log_2 x)^{k-1}}{\log x}\Big),
\end{align*}
where the sequence $(d_k)_{k=0}^\infty$ is recursively defined by $d_0=1$ and $d_k = \frac{1}{k}\sum_{j=2}^k d_{k-j}\,P(j)$. Explicitly, we have
\begin{align*}
d_k = \sum_{2n_2+3n_3+\cdots =k}\prod_{j\ge2}\frac{(P(j)/j)^{n_j}}{n_j!},
\end{align*}
where the sum ranges over partitions of $k$ without singletons. 

Moreover, for each prime $q$,
\begin{equation}\label{dexpand}
d_k = \sum_{p<q}\delta_p\,p^{-k} \ + \ O_q(q^{-k}),
\end{equation}
where $\delta_p:=e^{-1}\prod_{q\neq p}(1-\tfrac{p}{q})^{-1}e^{-p/q}$. In particular $d_k = \delta_2\,2^{-k} + O(3^{-k})$.
\end{thm}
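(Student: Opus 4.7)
My plan is to run the argument entirely through the generating function identity
\[
F_x(z) \ := \ \sum_{k \ge 0} \mathcal{P}_k(x)\, z^k \ = \ \prod_{p \le x}\Big(1 - \frac{z}{p}\Big)^{-1},
\]
which holds by unique factorization, since an integer $n$ with $\Omega(n)=k$ and $P^+(n)\le x$ is exactly a $k$-multiset of primes from $[2,x]$. The central algebraic move is to strip off the divergent ``linear-in-$z$'' part of $\log F_x(z)$ using a Weierstrass-type regularization: set
\[
D_x(z) \ := \ \prod_{p \le x}\Big(1-\tfrac{z}{p}\Big)^{-1} e^{-z/p}, \qquad D(z) \ := \ \prod_{p}\Big(1-\tfrac{z}{p}\Big)^{-1} e^{-z/p},
\]
so that $F_x(z) = D_x(z)\cdot\exp\!\big(z\sum_{p\le x}1/p\big)$. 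Because $(1-z/p)^{-1}e^{-z/p} = 1 + O(z^2/p^2)$ uniformly for $|z|$ bounded, the infinite product $D(z)$ converges absolutely on compact subsets of $\mathbb{C}$ avoiding the primes, and in fact extends to a meromorphic function on $\mathbb{C}$ with simple poles at each prime.

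Next, I would apply Mertens' second theorem $\sum_{p\le x}1/p = \log_2 x + \beta + E(x)$ with $E(x)\ll 1/\log x$, and use the tail bound $\sum_{p>x}1/p^j \ll x^{1-j}/\log x$ for $j\ge 2$ to obtain $[z^j]D_x(z) = [z^j]D(z) + O_k(1/x)$ uniformly for $0\le j\le k$. Extracting the $z^k$-coefficient from
\[
F_x(z) \ = \ e^{z(\log_2 x + \beta + E(x))}\, D_x(z)
\]
and expanding $(\log_2 x+\beta+E(x))^j = (\log_2 x + \beta)^j + O_j((\log_2 x)^{j-1}/\log x)$ then yields the stated main asymptotic with $d_k := [z^k] D(z)$. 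For the explicit and recursive formulae, take logs: $\log D(z) = \sum_p(-\log(1-z/p) - z/p) = \sum_{j\ge 2} P(j)\, z^j/j$. Logarithmic differentiation gives $z D'(z) = D(z)\sum_{j\ge 2}P(j)z^j$, which upon comparing $[z^{k-1}]$-coefficients is precisely $k\,d_k = \sum_{j=2}^k P(j)\, d_{k-j}$. Factoring $D(z) = \prod_{j\ge 2}\exp((P(j)/j)z^j)$ and expanding each factor as $\sum_{n_j\ge 0}(P(j)/j)^{n_j} z^{jn_j}/n_j!$, then collecting the coefficient of $z^k$, produces the partition sum.

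For the refined expansion \eqref{dexpand}, I would exploit the meromorphic structure of $D$. A direct residue computation at the simple pole $z=p$ gives
\[
\mathrm{Res}_{z=p}D(z) \ = \ -p\cdot e^{-1}\prod_{q\ne p}\Big(1-\tfrac{p}{q}\Big)^{-1}e^{-p/q} \ = \ -p\,\delta_p,
\]
so the principal part there equals $\delta_p/(1-z/p)$ and contributes $\delta_p p^{-k}$ to $[z^k]D(z)$. Subtracting principal parts at all primes $\le q$, the remainder
\[
\tilde D_q(z) \ := \ D(z) \ - \ \sum_{p \le q}\frac{\delta_p}{1-z/p}
\]
is holomorphic in the open disk $|z| < q^+$, where $q^+$ is the prime immediately after $q$. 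Cauchy's estimate on $|z|=q^+ - \varepsilon$ yields $[z^k]\tilde D_q(z) = O_q((q^+)^{-k})$, and absorbing the single term $\delta_q q^{-k} = O_q(q^{-k})$ into the error completes \eqref{dexpand}.

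The main technical obstacle will be the coefficient-wise passage $D_x \to D$ at a level of precision strong enough that the induced contribution is absorbed into the stated $(\log_2 x)^{k-1}/\log x$ error, together with propagating Mertens' $O(1/\log x)$ error faithfully through $\exp(z(\log_2 x+\beta+E(x)))$ across all $0\le j\le k$. Both reduce to routine bookkeeping and Chebyshev-type tail estimates on prime-power sums; no deep input beyond Mertens' theorems is required.
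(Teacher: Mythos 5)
The paper does not prove Theorem \ref{thm:smooth}; it is cited verbatim from \cite{lichtman2}. Your proposed argument is correct, and it matches the methodology used elsewhere in the paper: your $D(z) = \prod_p(1-z/p)^{-1}e^{-z/p}$ is exactly the function $C(z)$ introduced in the proof of Theorem \ref{nuexpand}, your identification $d_k = [z^k]D(z)$ with the partition sum follows from the paper's own Lemma \ref{taylorexpand}, and your residue formula $\delta_p = \lim_{z\to p}(1-z/p)D(z)$ coincides with the paper's equation \eqref{eq:findap}.

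A few remarks worth making. First, the passage $[z^j]D_x = [z^j]D + O_k(1/x)$ deserves one extra line of justification: from $\log D - \log D_x = \sum_{j\ge 2}(\sum_{p>x}p^{-j})z^j/j$ and Lemma \ref{pn}, each coefficient for $2\le j\le k$ is $O_k((x\log x)^{-1})$, and exponentiating keeps that size; the point is that this error, multiplied by $(\log_2 x)^j$, is dwarfed by the target $(\log_2 x)^{k-1}/\log x$, so only Mertens' $E(x)\ll 1/\log x$ governs the error. Second, in the refined expansion, your Cauchy-estimate step implicitly gives $[z^k]\tilde D_q(z) = O_q(r^{-k})$ for any $r < q^+$, not exactly $(q^+)^{-k}$; this is immaterial since any $r>q$ suffices and you then absorb $\delta_q q^{-k}$ into $O_q(q^{-k})$. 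For comparison, the paper's Lemma \ref{lem:Gk0bound} derives an analogous coefficient bound for $G$ by elementary product-rule recursions rather than Cauchy's integral estimate, which is the more ``elementary'' flavor advertised in Section \ref{sec:perspect}; a reader wishing to keep everything elementary could replace your Cauchy step for $\tilde D_q$ with a similar combinatorial tail estimate. Neither issue is a gap; the argument as written is sound.
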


In Section \ref{recurrence}, we utilize Theorem \ref{thm:smooth} in order to establish Theorem \ref{nuexpand}.

Finally, in Theorem \ref{nuexpand}, it is straightforward to verify that if such an expansion \eqref{eq:expand} exists, then the coefficients are determined uniquely by \eqref{betap}.  Indeed, this is seen by comparing the residue from the poles of $\nu(z)$ at $z=p$ with that of
\begin{align*}
\nu(z) & = \sum_{k\ge0} \nu_k z^k \ = \  \sum_{p<q}\beta_p \sum_{k\ge0} (z/p)^{k} \ + \ O_q(\sum_{k\ge0} (z/q)^{k})\\
& = \sum_{p<q}\frac{\beta_p}{1-z/p} \ + \ \frac{O_q(1)}{1-z/q}.
\end{align*}
Hence the core of the proof in Theorem \ref{nuexpand} lies in the existence of the expansion \eqref{eq:expand}.

This situation is somewhat reminiscent of the developments leading to the prime number theorem (PNT), i.e. $\pi(x) \sim x/\log x$. Indeed, Mertens' second theorem did not directly imply PNT. However, if the expansion $\pi(x) \sim cx/\log x$ were assumed to exist for some constant $c>0$, then Mertens' theorem would force $c=1$ (this result is historically attributed to Chebyshev). As such the core of the proof of PNT was to establish existence of such an expansion.

\section{The Proof of Theorem \ref{mainthm}}\label{main}

To prove Theorem \ref{mainthm} it suffices to establish the following result, which allows us to apply Tenenbaum's estimate \eqref{tenenbaumsresult}.  

\begin{prop}\label{prop1}
For $x\ge 3$ we have
\begin{displaymath}
{\mathcal R}_k(x) = \sum_{n_1+2n_2+\ldots+kn_k=k}\frac{{\mathcal S}_{n_1}(x)}{n_1!}\cdot\prod_{j=2}^k\frac{(P(j)/j)^{n_j}}{n_j!}+O_k\left(\frac{(\log_2 x)^{k-1}}{\log x}\right).
\end{displaymath}
\end{prop}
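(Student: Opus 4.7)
The plan is to expand both sides of Proposition~\ref{prop1} as parallel combinatorial sums over tuples $(M_j)_{j\ge1}$ of positive integers, and show they agree up to an error controlled by Tenenbaum's estimate \eqref{tenenbaumsresult}. The starting point is the generating-function identity
\[
\sum_{\substack{(b_j)_{j\ge1}\ge 0 \\ \sum_j j b_j = a}} \frac{\prod_{j\ge2}j^{-b_j}}{\prod_{j\ge1}b_j!} \ = \ 1 \qquad (a\ge 0),
\]
which follows from extracting $[t^a]$ in $e^t\prod_{j\ge2}e^{t^j/j}=\exp(\sum_{j\ge1}t^j/j)=(1-t)^{-1}$. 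Applying this prime-by-prime inside ${\mathcal R}_k(x)$, writing each $n=\prod_p p^{a_p}$ and decomposing $a_p=\sum_j j b_{p,j}$, and re-encoding via $M_j:=\prod_p p^{b_{p,j}}$ (so that $n=\prod_j M_j^j$ and $\sum_j j\Omega(M_j)=k$), I would arrive at
\[
{\mathcal R}_k(x) \ = \ \sum_{\substack{(M_j)_{j\ge1}:\,\sum_j j\Omega(M_j)=k \\ \prod_j M_j^j\le x}} \frac{\prod_{j\ge2}j^{-\Omega(M_j)}}{\prod_j M_j^j\,\prod_{j\ge1}\prod_p v_p(M_j)!}.
\]

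In parallel, I would unpack the RHS of Proposition~\ref{prop1} using the identities ${\mathcal S}_{n_1}(x)/n_1! = \sum_{\Omega(M_1)=n_1,\,M_1\le x}(M_1\prod_p v_p(M_1)!)^{-1}$ and, for $j\ge2$, $P(j)^{n_j}/n_j! = \sum_{\Omega(M_j)=n_j}(M_j^j\prod_p v_p(M_j)!)^{-1}$ (the latter from $e^{zP(j)}=\prod_p e^{z/p^j}$). This writes the RHS as exactly the same sum as above, but with the weaker constraint $M_1\le x$ replacing $\prod_j M_j^j\le x$. Consequently, $\text{RHS} - {\mathcal R}_k(x)$ equals the contribution from tuples with $M_1\le x < \prod_j M_j^j$.

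To bound this error, I would fix the higher tuple $(M_j)_{j\ge2}$, set $U:=\prod_{j\ge2}M_j^j$ and $n_1:=k-\sum_{j\ge2}j\Omega(M_j)$, and evaluate the inner sum over $M_1\in(x/U,x]$, which equals $({\mathcal S}_{n_1}(x)-{\mathcal S}_{n_1}(x/U))/n_1!$. Tenenbaum's Theorem~\ref{tenenbaumthm} combined with the mean value theorem applied to the monic polynomial $S_{n_1}$ gives
\[
\frac{{\mathcal S}_{n_1}(x)-{\mathcal S}_{n_1}(x/U)}{n_1!} \ \ll_k \ \frac{(\log_2 x)^{n_1-1}(1+\log U)}{\log x}
\]
for $U\le\sqrt{x}$. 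The regime $U>\sqrt{x}$ is handled by a Rankin-style bound $\sum_{(M_j)_{j\ge2}:\,U>\sqrt{x}}U^{-1}\ll x^{-\epsilon}$, exploiting convergence of $\sum_M M^{-j(1-\epsilon)}\le\zeta(j(1-\epsilon))$ for each $j\ge2$. The main obstacle is this final estimate: the weight $1/U$ and the $\log U$ factor must integrate cleanly against Tenenbaum's bound, but this works out because $\sum_{(M_j)_{j\ge2}}(1+\log U)/U$ converges as a finite combination of $\zeta(j)$ and $-\zeta'(j)$ factors for $j\ge2$. Summing over the $O_k(1)$ decomposition types then yields the required bound $O_k((\log_2 x)^{k-1}/\log x)$, completing the proof.
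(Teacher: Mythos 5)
Your argument is correct and arrives at the same proposition, but it takes a genuinely different route, especially in the error analysis. The paper reaches the initial combinatorial decomposition via Perron's formula and the inclusion--exclusion identity for the almost-prime zeta function $P_k(z)$ (Proposition \ref{inclusionexclusion}, citing \cite[Proposition 3.1]{lichtman}), whereas you extract the same decomposition directly and elementarily, prime by prime, from $[t^a]\exp\big(\sum_{j\ge1}t^j/j\big)=[t^a](1-t)^{-1}=1$; the paper itself notes its step is ``completely elementary and combinatorial in nature,'' and your derivation makes that explicit without analytic machinery. The more substantive divergence is in replacing the constraint $\prod_j M_j^j\le x$ by $M_1\le x$. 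The paper's Proposition \ref{exponenttype} does this by induction on $\ell$, the number of prime-power factors of exponent $\ge 2$, peeling off one $p^a$ at a time via partial summation against Tenenbaum's polynomial and the Nguyen--Pomerance tail bound (Lemma \ref{pn}). You instead fix the entire higher tuple $(M_j)_{j\ge2}$ at once, identify the deficit as $\big({\mathcal S}_{n_1}(x)-{\mathcal S}_{n_1}(x/U)\big)/n_1!$ with $U=\prod_{j\ge2}M_j^j$, and bound it by the mean value theorem applied to $S_{n_1}$ when $U\le\sqrt x$, disposing of $U>\sqrt x$ via Rankin's trick with the convergence of $\zeta(j(1-\epsilon))$ for $j\ge2$. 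Your approach avoids the induction entirely and is arguably slicker; the paper's inductive formulation is more modular and makes the dependence of implied constants on $k$ easier to track step by step. Both genuinely rely on Tenenbaum's Theorem \ref{tenenbaumthm} as the analytic input, and both produce the same $O_k\big((\log_2 x)^{k-1}/\log x\big)$ error. One small point worth making explicit in a full write-up: for $U\le\sqrt x$ you need $x\ge 9$ to ensure $x/U\ge 3$ so that Tenenbaum's estimate applies, and for small $x$ the statement holds trivially by adjusting the implied constant.
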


This result may be viewed as an analogue of \cite[Proposition 3.1]{lichtman} and \cite[Proposition 2.1]{lichtman2}. These latter results concern ``smooth'' (or ``friable'') sums over $P^+(n)\le x$, in which setting admits concise combinatorial proofs. However, the technical difficulty for Proposition \ref{prop1} arises, since the usual sums over $n\le x$ in ${\mathcal R}_k(x)$ require greater care in order to facilitate the combinatorics.

We will establish Proposition \ref{prop1} after stating and proving Propositions \ref{inclusionexclusion} and \ref{exponenttype} below.

\begin{prop}\label{inclusionexclusion}
We have
\begin{displaymath}
{\mathcal R}_k(x) = \sum_{n_1+2n_2+\ldots+kn_k=k}\left(\prod_{j=1}^k\frac{j^{-n_j}}{n_j!}\right)\sum_{m\le x}\frac{f(m)}{m},
\end{displaymath}
where $f(m)=f_{(n_1,...,n_k)}(m) = \#\{ \text{ordered prime tuples }(p_{1j},..,p_{n_jj})_{j\le k} : m=\prod_{j=1}^k \prod_{i=1}^{n_j}p_{ij}^j\}$.
\end{prop}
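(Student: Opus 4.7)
Unlike the later propositions in this section, Proposition \ref{inclusionexclusion} asserts an exact identity with no error term, so I would prove it by pure combinatorics. The plan is to swap summation on the right-hand side and show that the total weight attached to each $k$-almost prime $m\le x$ is exactly $1/m$, matching the definition of ${\mathcal R}_k(x)$.

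First I would expand the inner sum $\sum_{m\le x} f_{(n_j)}(m)/m$ into a sum over ordered prime tuples $(p_{ij})_{j\le k,\ i\le n_j}$ (with repetition of primes allowed, since the definition of $f$ imposes no distinctness condition), subject to $\prod_{i,j} p_{ij}^j \le x$. The crucial observation is that any such tuple produces an integer $m = \prod_{i,j} p_{ij}^j$ with
\[
\Omega(m) \ = \ \sum_j j\,n_j \ = \ k,
\]
so $m$ is automatically a $k$-almost prime. Swapping the order of summation (legal, as all sums are finite), the right-hand side equals
\[
\sum_{\substack{m \le x \\ \Omega(m) = k}} \frac{W(m)}{m}, \qquad \text{where}\quad W(m) := \sum_{(n_j):\ \sum j n_j = k} \prod_{j=1}^{k} \frac{j^{-n_j}}{n_j!}\, f_{(n_j)}(m),
\]
so it suffices to prove $W(m)=1$ for every $k$-almost prime $m$.

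To evaluate $W(m)$, write $m = \prod_i q_i^{a_i}$ with distinct primes $q_i$ and $\sum_i a_i = k$. Any tuple counted by $f_{(n_j)}(m)$ is determined by nonnegative integers $c_{ij}$ recording how many times $q_i$ appears in the $j$-th slot, subject to $\sum_j j\,c_{ij} = a_i$ for each $i$ and $n_j = \sum_i c_{ij}$ for each $j$. The number of tuples realizing a given $(c_{ij})$ is the multinomial product $\prod_j \binom{n_j}{c_{1j},\ldots,c_{rj}} = \prod_j \frac{n_j!}{\prod_i c_{ij}!}$. Substituting into $W(m)$, the $n_j!$ cancels, and since $\prod_j j^{-n_j} = \prod_{i,j} j^{-c_{ij}}$ the constraints decouple across $i$, yielding
\[
W(m) \ = \ \prod_i \biggl(\, \sum_{(c_j)_{j\ge 1}:\ \sum_j j c_j = a_i}\ \prod_{j\ge 1} \frac{j^{-c_j}}{c_j!}\biggr).
\]
Each inner sum is the coefficient of $y^{a_i}$ in $\prod_{j\ge 1} \exp(y^j/j) = \exp(-\log(1-y)) = (1-y)^{-1}$, hence equals $1$. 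Thus $W(m) = 1$ as required.

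The main obstacle is purely bookkeeping: verifying that the reparametrization $((n_j),\text{tuple}) \leftrightarrow (c_{ij})$ is a bijection once the dependent constraint $n_j = \sum_i c_{ij}$ is dropped from the summation, and correctly tracking how the factors $n_j!$ and $j^{-n_j}$ reassemble into the $\prod_i$ form. No analytic input beyond the rearrangement of finite sums is needed; the closing identity is simply the cycle index of the symmetric group $S_{a_i}$ evaluated at all variables equal to $1$.
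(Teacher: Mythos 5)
Your proof is correct and takes a genuinely different route from the paper. The paper deduces the identity analytically: it applies Perron's formula to $\mathcal{R}_k(x)$, invokes the partition formula $P_k(z) = \sum_{n_1+2n_2+\cdots=k}\prod_j \frac{(P(jz)/j)^{n_j}}{n_j!}$ for the almost-prime zeta function (citing an earlier paper of the third author), and then applies Perron's formula in the other direction to convert the resulting contour integrals back into truncated sums $\sum_{m\le x} f(m)/m$. You instead prove the identity by direct term-by-term comparison: swap the order of summation, group the ordered prime tuples by the integer $m$ they produce, and use the cycle-index generating function $\prod_{j\ge 1}\exp(y^j/j) = (1-y)^{-1}$ to show that each $k$-almost prime $m\le x$ receives total weight exactly $1/m$. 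The underlying combinatorial identity is the same in both arguments (it is what drives the cited Dirichlet-series formula for $P_k$), but your version avoids Perron's formula and contour integration entirely. The paper itself remarks that this proposition ``is completely elementary and combinatorial in nature. However, it is efficiently deduced with the help of Perron's formula''; your argument is essentially the elementary proof that this remark alludes to. Your bookkeeping is sound: the array $(c_{ij})$ parametrizes the tuples correctly, the multinomial count $\prod_j n_j!/\prod_i c_{ij}!$ is right, the constraint $n_j=\sum_i c_{ij}$ is indeed redundant once one sums freely over $(c_{ij})$ subject to $\sum_j j\,c_{ij}=a_i$, and the cancellation of $n_j!$ followed by the decoupling across distinct primes $q_i$ goes through as claimed.
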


\begin{proof}
We apply Perron's formula to the characteristic function of the $k$-almost primes, along with the inclusion-exclusion formulas for the almost prime zeta function $P_k(z)=\sum_{\Omega(n)=k}n^{-z}$ (in contrast to $P(z)^k$ as in Tenenbaum's proof \cite{tenenbaum}) to obtain for any $c>0$ and $x\in \mathbb{R}^+\setminus \mathbb{N}$,
\begin{align*}
{\mathcal R}_k(x) &= \sum_{\substack{\Omega(n)=k\\n\le x}}\frac{1}{n} \ = \ \frac{1}{2\pi i}\int_{c+i\mathbb{R}}P_k(z+1)\frac{x^z}{z}\dd{z}\\
&= \frac{1}{2\pi i}\int_{c+i\mathbb{R}}\sum_{n_1+2n_2+\ldots+kn_k=k}\prod_{j=1}^k\frac{1}{n_j!}\left(\frac{P(j(z+1))}{j}\right)^{n_j}\frac{x^z}{z}\dd{z}\\
&= \sum_{n_1+2n_2+\ldots+kn_k=k}\frac{1}{2\pi i}\left(\prod_{j=1}^k\frac{j^{-n_j}}{n_j!}\right)\int_{c+i\mathbb{R}}\prod_{j=1}^k P(j(z+1))^{n_j}\frac{x^z}{z}\dd{z},
\end{align*}
where the equality of the first and second lines is a direct application of \cite[Proposition 3.1]{lichtman}.  Applying Perron's formula again,
\begin{align}\label{eq:Rkf}
{\mathcal R}_k(x) &= \sum_{n_1+2n_2+\ldots+kn_k=k}\left(\prod_{j=1}^k\frac{j^{-n_j}}{n_j!}\right)\sum_{m\le x}\frac{f(m)}{m},
\end{align}
where $f$ satisfies
\begin{align*}
\sum_{m=1}^\infty \frac{f(m)}{m^s} = \prod_{j=1}^k P(js)^{n_j} = \prod_{j=1}^k \prod_{i=1}^{n_j}\sum_{p_{ij}} p_{ij}^{-js} = \sum_{{A}} \prod_{q\in A}q^{-s}
\end{align*}
as $A$ ranges over all choices of multi-sets $A = \{p_{ij}^j : j\le k, i\le n_j\}$.  Thus $f(m) = \sum_{m = \prod_{j=1}^k \prod_{i=1}^{n_j}p_{ij}^j}1$, by the uniqueness of Dirichlet series coefficients. Hence
\begin{align}\label{eq:fnx}
\sum_{m\le x}\frac{f(m)}{m} = \sum_{ \prod_{j=1}^k \prod_{i=1}^{n_j}p_{ij}^{j} \, \le \, x}\prod_{j=1}^k \prod_{i=1}^{n_j}p_{ij}^{-j}.
\end{align}
Substituting this back into \eqref{eq:Rkf} completes the proof.
\end{proof}

We note that Proposition \ref{inclusionexclusion} is completely elementary and combinatorial in nature. However, it is efficiently deduced with the help of Perron's formula.

To complete the proof of Proposition \ref{prop1} we show that (at an admissible error) we may replace the condition $\prod_{j=1}^k (p_{1j}\cdots p_{n_j j})^j \, \le \, x$ in the proof of Proposition \ref{inclusionexclusion} with the weaker condition $p_{11}\cdots p_{n_1 1} \, \le \, x$, to obtain
\begin{displaymath}
\sum_{m\le x}\frac{f(m)}{m} \approx \Big(\sum_{p_{11}\cdots p_{n_11}\le x}\prod_{i=1}^{n_1}p_{i1}^{-1}\Big)\prod_{j=2}^k \prod_{i=1}^{n_j} \sum_{p_{ij}} p_{ij}^{-j} \ = {\mathcal S}_{n_1}(x)\prod_{j=2}^k P(j)^{n_j}.
\end{displaymath}

We shall do so using the following proposition, whose proof we give in the next section.

\begin{prop}\label{exponenttype}
Given $\ell\ge 0, n\ge 0,$ and $a_i\ge 2$ for $i=1,\ldots,\ell$, let $k=n+\sum_{i\le \ell} a_i$. Then
\begin{displaymath}
\sum_{m=q_1\cdots q_n \prod_{i\le \ell}p_i^{a_i}\le x}\frac{1}{m} = {\mathcal S}_n(x)\prod_{i\le \ell} P(a_i)+O_{k}\left(\frac{(\log_2 x)^{n-1}}{\log x}\right),
\end{displaymath}
where the sum $\sum_{m\le x}$ ranges over $n+\ell$ independent prime variables, $q_1,\ldots q_n,p_1\ldots p_\ell$, whose values are not necessarily distinct.% and $\Omega(v)=k$.
\end{prop}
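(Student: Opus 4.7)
The plan is to factor $m=QY$ where $Q=q_1\cdots q_n$ runs over the first $n$ prime variables and $Y=\prod_{i\le\ell}p_i^{a_i}$ over the remaining $\ell$ variables, exploiting that each $p_i$-sum is individually convergent (since $a_i\ge 2$) while only the inner $q$-sum requires Tenenbaum's asymptotic. Writing
\begin{displaymath}
\sum_{m\le x}\frac{1}{m} \ = \ \sum_{(p_1,\ldots,p_\ell)}\frac{1}{Y}\,\mathcal{S}_n(x/Y),
\end{displaymath}
the goal is to replace $\mathcal{S}_n(x/Y)$ by $\mathcal{S}_n(x)$ and extend the outer sum to all $\ell$-tuples of primes, showing that both modifications fit within the claimed error $O_k((\log_2 x)^{n-1}/\log x)$.

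I split the outer sum at $Y\le\sqrt{x}$ versus $Y>\sqrt{x}$. In the latter regime, pigeonhole forces some $p_i>x^{1/(2k\ell)}$, and the tail bound $\sum_{p>T}p^{-a}\ll_a T^{-1}$ for $a\ge 2$ yields $\sum_{Y>\sqrt{x}}1/Y\ll_k x^{-\delta}$ for some $\delta=\delta(k,\ell)>0$. Combined with the crude bound $\mathcal{S}_n(y)\ll_k(\log_2 y)^n$ from Theorem \ref{tenenbaumthm}, the full large-$Y$ contribution is $O_k(x^{-\delta}(\log_2 x)^n)$, which is negligible against the target error.

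For $Y\le\sqrt{x}$ we have $x/Y\ge\sqrt{x}\ge 3$, so Theorem \ref{tenenbaumthm} applies to both $\mathcal{S}_n(x/Y)$ and $\mathcal{S}_n(x)$, yielding
\begin{displaymath}
\mathcal{S}_n(x/Y)-\mathcal{S}_n(x) \ = \ \bigl(S_n(\log_2(x/Y))-S_n(\log_2 x)\bigr) \,+\, O_k\!\left(\frac{(\log_2 x)^{n-1}}{\log x}\right).
\end{displaymath}
Since $\log Y\le\tfrac12\log x$, a first-order Taylor expansion gives $\log_2(x/Y)-\log_2 x=O(\log Y/\log x)$; as $S_n$ is a polynomial of degree $n$, the parenthesized difference is $O_k((\log_2 x)^{n-1}\log Y/\log x)$. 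Summed against $1/Y$ over all $\ell$-tuples, the series
\begin{displaymath}
\sum_{(p_1,\ldots,p_\ell)}\frac{\log Y}{Y} \ = \ \sum_{i\le\ell} a_i\Bigl(\sum_p \frac{\log p}{p^{a_i}}\Bigr)\prod_{j\ne i}P(a_j)
\end{displaymath}
converges absolutely since each $a_i\ge 2$, so the cumulative error in this range is $O_k((\log_2 x)^{n-1}/\log x)$. The leftover main term is $\mathcal{S}_n(x)\sum_{Y\le\sqrt{x}}1/Y=\mathcal{S}_n(x)\prod_iP(a_i)+O_k(x^{-\delta}(\log_2 x)^n)$, finishing the proof.

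The chief subtlety is arranging the Taylor step so that the polynomial degree of $S_n$, rather than a trivial sup-norm bound, drives the exponent down to $n-1$; this is exactly what the claimed error allows and what forces the split at $\sqrt x$ rather than at $x$ itself. The edge case $n=0$ is simpler, since then $\mathcal{S}_0\equiv 1$ and only the large-$Y$ tail estimate on $\sum_Y 1/Y$ is needed.
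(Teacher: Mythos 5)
Your proof is correct, and it takes a genuinely different route from the paper. The paper proceeds by induction on $\ell$: at each step it peels off one prime-power factor $p^a$, writes the sum as $\sum_{p\le(x/2^k)^{1/a}}p^{-a}\,{\mathcal S}_n(x/p^a)$, applies Tenenbaum's Theorem \ref{tenenbaumthm}, and then runs a partial-summation argument converting $\sum_p p^{-a}S_n(\log_2(x/p^a))$ into $P(a)S_n(\log_2 x)$ plus an explicitly controlled integral remainder. You instead group all the rigid prime-power factors into a single variable $Y=\prod p_i^{a_i}$ (whose reciprocal sum converges since every $a_i\ge 2$), split once at $Y\le\sqrt x$, dispose of the large-$Y$ range by a pigeonhole-plus-tail bound, and in the small-$Y$ range bound ${\mathcal S}_n(x/Y)-{\mathcal S}_n(x)$ via Tenenbaum together with a mean-value estimate for the degree-$n$ polynomial $S_n$, using the finiteness of $\sum_Y(\log Y)/Y$. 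The two proofs use the same input (Tenenbaum plus Lemma \ref{pn}) but your version avoids the per-step partial summation and integral-splitting of the paper, replacing it with a single clean Taylor/mean-value step; this is somewhat tidier, and as you note the split at $\sqrt x$ is what keeps $\log_2(x/Y)$ within $O(1)$ of $\log_2 x$ so that $S_n'$ is $O((\log_2 x)^{n-1})$ on the relevant interval. One small point worth making explicit in a polished write-up: the implied constants depend a priori on the multiset $(a_1,\dots,a_\ell)$ and on $n$, but since $n+\sum a_i=k$ with each $a_i\ge 2$ there are only finitely many such data for each $k$, so the constant can be taken to depend on $k$ alone — the same observation the paper makes at the end of its induction.
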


\begin{proof}[Proof of Proposition \ref{prop1} from Propositions \ref{inclusionexclusion} and \ref{exponenttype}]

From \eqref{eq:fnx},
\begin{align}
\sum_{m\le x}\frac{f(m)}{m} &= \sum_{m=p_{11}\cdots p_{n_1 1}\prod_{j=2}^k\prod_{i=1}^{n_j} p_{ij}^j\le x}\frac{1}{m} \nonumber\\
& = {\mathcal S}_{n_1}(x)\prod_{j=2}^k P(j)^{n_j}+O_k\left(\frac{(\log_2 x)^{n_1-1}}{\log x}\right) \label{eq:fmmSn}
\end{align}
by Proposition \ref{exponenttype} for the choices of parameters
\begin{align*}
n=n_1,\quad \ell = 2n_2+\cdots+kn_k, \quad \{a_1,\ldots,a_\ell\} = \bigcup_{j=2}^k\{j\}^{n_j} \\
q_1,\ldots q_n = p_{11},\ldots p_{n_1 1}, \quad \prod_{i=1}^{\ell}p_i^{a_i} = \prod_{j=2}^k\prod_{i=1}^{n_j} p_{ij}^j.
\end{align*}
Substituting \eqref{eq:fmmSn} into Proposition \ref{inclusionexclusion} gives
\begin{align*}
{\mathcal R}_k(x) & = \sum_{n_1+2n_2+\ldots+kn_k=k}\left(\prod_{j=1}^k\frac{j^{-n_j}}{n_j!}\right)\bigg({\mathcal S}_{n_1}(x)\prod_{j=2}^k P(j)^{n_j}+O_k\left(\frac{(\log_2 x)^{n_1-1}}{\log x}\right)\bigg)\\
& = \sum_{n_1+2n_2+\ldots+kn_k=k}\frac{{\mathcal S}_{n_1}(x)}{n_1!}\cdot\prod_{j=2}^k\frac{(P(j)/j)^{n_j}}{n_j!}+O_k\left(\frac{(\log_2 x)^{k-1}}{\log x}\right).
\end{align*}
This completes the proof of Proposition \ref{prop1}, and therefore Theorem \ref{mainthm}.
\end{proof}

\section{Proof of Proposition \ref{exponenttype}}

We will use a lemma of Nguyen and Pomerance, see \cite[Lemma 2.7]{pomeranceamicable}.

\begin{lem}[Nguyen and Pomerance]\label{pn}
For all $x>1$, we have
\begin{displaymath}
\sum_{p>x}\frac{1}{p^2} < \frac{1}{x\log x}.
\end{displaymath}
\end{lem}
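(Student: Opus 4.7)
The plan is to reduce the sum to an integral involving the prime counting function $\pi(t)$ via Abel summation, and then apply an explicit upper bound on $\pi$ of Rosser--Schoenfeld or Dusart type. Concretely, since $p^{-2}\to 0$, partial summation gives
\[
\sum_{p>x}\frac{1}{p^2} = \int_x^\infty \frac{d\pi(t)}{t^2} = -\frac{\pi(x)}{x^2} + 2\int_x^\infty \frac{\pi(t)}{t^3}\,dt.
\]
A single integration by parts yields the exact identity $\int_x^\infty \frac{dt}{t^2\log t} = \frac{1}{x\log x} - \int_x^\infty \frac{dt}{t^2\log^2 t}$, so the integral $\int_x^\infty \pi(t)/t^3\,dt$ is already on the order of $\frac{1}{2x\log x}$ when $\pi(t)\approx t/\log t$. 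The boundary term $-\pi(x)/x^2$ will supply the negative contribution needed to push the total below the target $1/(x\log x)$.

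First I would establish the bound for $x$ larger than some explicit threshold $x_0$. For this range I would substitute Dusart-style inequalities of the form $\pi(t) \le t/\log t + c\,t/\log^2 t$ together with the matching lower bound $\pi(t)\ge t/\log t$ valid for $t\ge 11$. After integrating by parts as above to turn $\int_x^\infty dt/(t^2\log t)$ into $\frac{1}{x\log x}$ minus a genuinely smaller term, the combination $-\pi(x)/x^2+2\int_x^\infty \pi(t)/t^3\,dt$ consolidates into $\frac{1}{x\log x}$ minus a positive remainder, giving the strict inequality. For the remaining range $1<x\le x_0$ I would verify the claim by direct computation, evaluating $\sum_{p\le x_0}p^{-2}$ to high precision and checking the inequality on each subinterval $(p_n,p_{n+1}]$ on which both sides are monotone.

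The main obstacle is tightness: since $\sum_{p>x}1/p^2 \sim \frac{1}{x\log x}$ by the prime number theorem, the argument cannot afford any constant greater than $1$ in front of $\frac{1}{x\log x}$, so the negative boundary term $-\pi(x)/x^2$ must be used carefully to cancel the leading $\frac{2}{x\log x}$ produced by the integral. A secondary point of care is the transition near small primes, where one must ensure the inequality is strict for $x$ sitting just below a prime (where the left side jumps upward) and for $x$ near $1$ (where $\log x$ is small); both cases are handled by the direct numerical verification on $(1,x_0]$, but one must choose $x_0$ large enough that the asymptotic estimates hold comfortably yet small enough that the direct verification remains feasible.
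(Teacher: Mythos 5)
The paper does not prove this lemma; it simply cites it from Nguyen and Pomerance \cite{pomeranceamicable}, so there is no in-paper argument to compare against. Your framework --- write the tail as a Stieltjes integral, integrate by parts to obtain $\sum_{p>x}p^{-2}=-\pi(x)/x^2+2\int_x^\infty \pi(t)t^{-3}\,dt$, insert explicit Rosser--Schoenfeld/Dusart bounds for large $x$, and verify a finite initial range by machine --- is a standard and reasonable way to attack such an explicit estimate, and your observation that the constant $1$ is tight (so the negative boundary term is essential) is exactly right.

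There is, however, a genuine gap: the specific pair of bounds you propose does not close the argument. With $\pi(x)\ge x/\log x$ and $\pi(t)\le t/\log t + c\,t/\log^2 t$, the exact identity
\begin{displaymath}
2\int_x^\infty \frac{dt}{t^2\log t}=\frac{2}{x\log x}-2\int_x^\infty \frac{dt}{t^2\log^2 t}
\end{displaymath}
gives
\begin{displaymath}
\sum_{p>x}\frac{1}{p^2}\ \le\ -\frac{1}{x\log x}+\frac{2}{x\log x}+2(c-1)\int_x^\infty \frac{dt}{t^2\log^2 t}\ =\ \frac{1}{x\log x}+2(c-1)\int_x^\infty \frac{dt}{t^2\log^2 t}.
\end{displaymath}
Since $\pi(t)-t/\log t\sim t/\log^2 t$ and the next term in the expansion of $\text{li}(t)$ is positive, any constant $c$ for which $\pi(t)\le t/\log t+c\,t/\log^2 t$ holds on a tail $t\ge T$ must satisfy $c>1$ (Dusart's bound has $c=1.2762$, valid for all $t>1$). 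Thus $2(c-1)>0$ and the estimate lands \emph{above} $1/(x\log x)$, not below: the one-term boundary bound exactly cancels the leading $2/(x\log x)$ and leaves no spare margin to absorb the $\pi$-approximation error.

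The fix is to strengthen the boundary estimate to the two-term lower bound $\pi(x)\ge x/\log x + x/\log^2 x$, which Dusart gives for $x\ge 599$. Then $-\pi(x)/x^2\le -1/(x\log x)-1/(x\log^2 x)$, and since $\int_x^\infty dt/(t^2\log^2 t)<1/(x\log^2 x)$ one obtains
\begin{displaymath}
\sum_{p>x}\frac{1}{p^2}\ <\ \frac{1}{x\log x}-\frac{1}{x\log^2 x}+\frac{2(c-1)}{x\log^2 x}\ =\ \frac{1}{x\log x}+\frac{2c-3}{x\log^2 x},
\end{displaymath}
which is strictly below $1/(x\log x)$ whenever $c<3/2$. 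With $c=1.2762$ this holds, and only the finite range $1<x\le 599$ remains for the direct numerical check you already planned. In short, the method is sound, but the ``matching lower bound $\pi(t)\ge t/\log t$'' is one asymptotic order too coarse; you need the second term of $\text{li}$ in the lower bound at the boundary as well.
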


It follows that $\sum_{p>x}p^{-a} < (x^{a-1}\log x)^{-1}$ for any $x>1$ and $a\ge 2$.  Aside from the numerically explicit bounds in Theorem \ref{explicitr2theorem} and Corollaries \ref{r2bounds}--\ref{r2squarefree}, we only use an upper bound of the form $O((x^{a-1}\log x)^{-1})$, which is a consequence of Chebyshev's estimates.

\begin{proof}[Proof of Proposition \ref{exponenttype}]

We proceed by induction on $\ell$.  Note the claim holds for $\ell=0$ by definition of ${\mathcal S}_n(x)$. The claim also holds for $\ell=1$ and $n=0$ by Lemma \ref{pn}, with the convention that ${\mathcal S}_0(x)=1$. So assume $\ell=1$, and take any $n\ge 1$, and $a\ge 2$. 

Let $\mathcal{E}$ denote the multi-set of numbers of the form $q_1\cdots q_n$. Then
\begin{align}\label{eq:pam}
\mathop{\sum_{p^a m\le x}}_{m\in \mathcal{E}} \frac{1}{p^a m} &= \sum_{p\le (x/2^n)^{1/a}}\frac{1}{p^a}\mathop{\sum_{m\le x/p^a}}_{m\in \mathcal{E}}\frac{1}{m} = \sum_{p\le (x/2^n)^{1/a}}\frac{1}{p^a}{\mathcal S}_n(x/p^a) \nonumber\\
&= \sum_{p\le (x/2^n)^{1/a}}\left(\frac{1}{p^a}S_n\left(\log_2(x/p^a)\right)+O_{n}\left(\frac{1}{p^a}g_n(x/p^a)\right)\right) 
\end{align}
by \eqref{tenenbaumsresult} from Tenenbaum, where $g_n(x):=(\log_2 x)^{n-1}/\log x$.  (Note the condition $t\ge 3$ in \eqref{tenenbaumsresult}.  Indeed $x/p^a\ge 3$ if $n\ge 2$, while if $n=1$, we appeal directly to Mertens' theorem.) Splitting the sum at ${x}^{1/(a+1)}$, we bound the error in \eqref{eq:pam} as
\begin{align}\label{eq:pagn}
\sum_{p\le (x/2^n)^{1/a}}\frac{g_n(x/p^a)}{p^a}
\le (\log_2 x)^{n-1}\Big(\frac{a+1}{a\log x}\sum_{p\le x^{1/(a+1)}}p^{-a}  + \frac{1}{n\log 2}\sum_{x^{1/(a+1)}< p}p^{-a}\Big) \ll_{a,n} g_n(x)
%\frac{(\log_2 x)^{n-1}}{\log x}
\end{align}
using Lemma \ref{pn} on the right sum above.

Let $s_a(t) :=\sum_{p\le t}p^{-a}$. By partial summation, the main term in \eqref{eq:pam} is
\begin{align*}
\sum_{p\le (x/2^n)^{1/a}}\frac{1}{p^a}S_n\left(\log_2(x/p^a)\right) & = s_a\left((x/2^n)^{1/a}\right)S_n(\log_2 2^n)+\int_2^{(x/2^n)^{1/a}}\frac{as_a(t)S_n'\left(\log_2(x/t^a)\right)}{t\log(x/t^a)}\dd{t}\\
& = P(a)S_n(\log_2 2^n) + P(a)\int_2^{(x/2^n)^{1/a}}\frac{aS_n'\left(\log_2(x/t^a)\right)}{t\log(x/t^a)}\dd{t}\\
& \quad+O_{a,n}\left(x^{1/a-1}+\int_2^{(x/2^n)^{1/a}}\frac{a\left|S_n'\left(\log_2(x/t^a)\right)\right|}{t^a\log t\log(x/t^a)}\dd{t}\right)
\end{align*}
using Lemma \ref{pn}. We bound the latter integral in the error as $\ll_{a,n}g_n(x)$ by splitting the interval at $y=x^{1/(a+1)}$.  (If $t>y$ then $\log t > \log x/(a+1)$, and if $t < y$ then $\log(x/t^a) > \log x/(a+1)$.) The main term integral may be evaluated exactly, and so the main term is
\begin{align}\label{eq:paSn}
\sum_{p\le (x/2^n)^{1/a}} & \frac{1}{p^a}S_n\left(\log_2(x/p^a)\right) \nonumber\\
&=P(a)S_n(\log_2 2^n)-P(a)\big[S_n\left(\log_2(x/t^a)\right) \big]_2^{(x/2^n)^{1/a}} + O_{a,n}(g_n(x)) \nonumber\\
& = P(a)S_n\left(\log_2(x/2^a)\right)+O_{a,n}(g_n(x)) \nonumber\\
& = P(a)S_n\left(\log_2 x\right)+O_{a,n}(g_n(x)) \nonumber\\
& = P(a){\mathcal S}_n(x)+O_{a,n}(g_n(x)),
\end{align}
by \eqref{tenenbaumsresult}.
Here we used $\log_2(x/2^a) = \log_2 x + O_{a}(1/\log x)$ and $\deg(S_n)=n$, so the error term is distributed over less than $n$ factors of $\log_2 x$. Since the implied constant depends only on $a$ and $n$, it can be taken to depend only on the value of $a+n=k$. Plugging back \eqref{eq:paSn} and \eqref{eq:pagn} into \eqref{eq:pam}, we obtain
\begin{align}\label{eq:paml1}
\mathop{\sum_{p^a m\le x}}_{m\in \mathcal{E}} \frac{1}{p^a m} = P(a){\mathcal S}_n(x)+O_{k}(g_n(x)).
\end{align}

This completes the base case $\ell=1$, for any $n\ge0$ and $a\ge2$.

We now turn to the induction step.  Assume that
\begin{displaymath}
\sum_{v=q_1\cdots q_n\prod_{i\le \ell}p_i^{a_i}\le x}\frac{1}{v} = {\mathcal S}_n(x)\prod_{i\le \ell}P(a_i)+O_k(g_n(x)),
\end{displaymath}
where $k=\Omega(v)$.  Let $\mathcal{E}$ be the multi-set of numbers of the form $q_1\cdots q_n\prod_{i\le \ell}p_i^{a_i}$.  Then, for any $a\ge 2$, we have
\begin{displaymath}
\mathop{\sum_{p^a v\le x}}_{v\in \mathcal{E}} \frac{1}{p^a v} = \sum_{p\le (x/2^k)^{1/a}}\frac{1}{p^a}\mathop{\sum_{v\le x/p^a}}_{v\in \mathcal{E}}\frac{1}{v}.
\end{displaymath}
Thus by the induction hypothesis,
\begin{align}\label{eq:paprod}
\mathop{\sum_{p^a v\le x}}_{v\in \mathcal{E}} \frac{1}{p^a v} &= \sum_{p\le (x/2^k)^{1/a}}\frac{1}{p^a}\prod_{i\le \ell}P(a_i)\left({\mathcal S}_n(x/p^a)+O_k\left(g_n(x/p^a)\right)\right) \nonumber\\
&=\prod_{i\le \ell}P(a_i)\cdot\sum_{p\le (x/2^k)^{1/a}} \frac{1}{p^a}S_n\left(\log_2(x/p^a)\right) + O_{k,\ell}\left(\sum_{p\le (x/2^k)^{1/a}}\frac{g_n(x/p^a)}{p^a}\right)
\end{align}
by \eqref{tenenbaumsresult}. As with \eqref{eq:pagn}, the error term above is $\ll_{k,\ell} g_n(x)$. As with \eqref{eq:paSn}, we have
\begin{align*}
\sum_{p\le (x/2^k)^{1/a}}\frac{1}{p^a}S_n\left(\log_2(x/p^a)\right)
& = P(a)S_n\left(\log_2 x\right)+O_{a,k,n}(g_n(x)).
\end{align*}
Finally, we note that the implied constant can be taken to depend only on $a+k$, since for any fixed $a$ and $k$, there are only finitely many possibilities for $\ell$ and $n$.  Therefore,
\begin{displaymath}
\mathop{\sum_{p^a v\le x}}_{v\in \mathcal{E}} \frac{1}{p^a v} = P(a)\prod_{i\le \ell}P(a_i)\cdot {\mathcal S}_n(x)+O_k(g_n(x)).
\end{displaymath}
This completes the proof of Proposition \ref{exponenttype}.
\end{proof}

\section{The Proof of Theorem \ref{mainthm2}}\label{main2}

We now prove Theorem \ref{mainthm2}.  Recall that by Theorem \ref{mainthm}, it suffices to show that $R_k(X+\beta)=V_k(X)$ for each $k$, where $V_k$ is defined as in \eqref{V}.  Recall formulas \eqref{mertensconstant} and \eqref{nu} for the Mertens constant $\beta$ and the function $\nu(z)$. Also recall the prime zeta function $P(s) = \sum_p p^{-s}$.

\begin{lem}\label{taylorexpand}
Define $c_1=c^*_1=\beta$, $c_j = P(j)-(-1)^j\zeta(j)$ and $c^*_j = (-1)^{j+1}(P(j)+\zeta(j))$, $j\ge 2$.  Then for $|z|<1$, we have the expansions
\begin{align}\label{nutaylor}
\nu(z) = \exp\Big(\sum_{j\ge1}\frac{c_j z^j}{j}\Big)\quad \text{and}\quad\nu^*(z) = \exp\Big(\sum_{j\ge1}\frac{c^*_j z^j}{j}\Big).
\end{align}
\end{lem}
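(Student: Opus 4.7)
The plan is to take logarithms of both sides and match Taylor coefficients. Since $\nu(0)=1$ (as $\Gamma(1)=1$ and each Euler factor equals $1$ at $z=0$), $\log\nu(z)$ is analytic in a neighborhood of the origin, so the identity reduces to computing the Taylor coefficients of $\log\nu(z)$ directly from the product in \eqref{nu}.

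First I would invoke the classical expansion $\log\Gamma(z+1) = -\gamma z + \sum_{j\geq 2}(-1)^j\zeta(j)z^j/j$, valid for $|z|<1$. Next, for each prime $p$, I would expand the $p$-th Euler factor as
\[
\log\!\left[\left(1-\tfrac{z}{p}\right)^{-1}\!\left(1-\tfrac{1}{p}\right)^z\right] = \sum_{j\geq 1}\frac{z^j}{jp^j} - z\sum_{j\geq 1}\frac{1}{jp^j} = \sum_{j\geq 2}\frac{z^j-z}{jp^j},
\]
where the cancellation of the $j=1$ terms is the crucial step: it is what allows the subsequent summation over $p$ to converge, since $\sum_p 1/p$ diverges whereas $P(j)=\sum_p p^{-j}$ is finite for $j\geq 2$. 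Interchanging the order of summation, which is justified for $|z|<1$ by absolute convergence, then yields
\[
\sum_p \log\!\left[\left(1-\tfrac{z}{p}\right)^{-1}\!\left(1-\tfrac{1}{p}\right)^z\right] = \sum_{j\geq 2}\frac{P(j)\,z^j}{j} - z\sum_{j\geq 2}\frac{P(j)}{j}.
\]
Combining with $-\log\Gamma(z+1)$ and collecting powers of $z$, the coefficient of $z^j$ in $\log\nu(z)$ for $j\geq 2$ becomes $\bigl(P(j)+(-1)^{j+1}\zeta(j)\bigr)/j = c_j/j$, as required. For the linear coefficient I would apply the elementary identity $1/p+\log(1-1/p)=-\sum_{j\geq 2}(jp^j)^{-1}$ to the definition \eqref{mertensconstant} of $\beta$, obtaining $\gamma - \sum_{j\geq 2}P(j)/j = \beta = c_1$; this completes the first expansion.

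The argument for $\nu^*$ is essentially the same, using the squarefree analogue in which each Euler factor $(1-z/p)^{-1}$ is replaced by $(1+z/p)$. Then $\log(1+z/p)$ introduces alternating signs, so the $p$-th contribution becomes $\sum_{j\geq 2}\bigl((-1)^{j+1}z^j-z\bigr)/(jp^j)$; the $j=1$ terms again cancel, the linear coefficient remains $\beta$, and the $z^j$ coefficient for $j\geq 2$ becomes $(-1)^{j+1}(P(j)+\zeta(j))/j = c_j^*/j$. The calculations throughout are routine; the main (and only) subtle point is the need to combine the two logarithmic series at each prime into a single sum beginning at $j=2$ before exchanging with the sum over $p$, in order to neutralize the individually divergent $j=1$ contributions.
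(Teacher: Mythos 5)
Your proof is correct and follows essentially the same route as the paper's: take logarithms, use the Weierstrass/Taylor expansion of $\log\Gamma(z+1)$, expand the prime Euler factors into power series, observe the cancellation of the divergent $j=1$ terms, and identify the linear coefficient via the definition of $\beta$ in \eqref{mertensconstant}. The only cosmetic difference is bookkeeping: the paper keeps $z\log(1-1/p)$ intact and pairs it directly with the $z/p$ term from $-\log(1-z/p)$ to produce the convergent sum $\sum_p(1/p+\log(1-1/p))=\beta-\gamma$, whereas you fully expand $\log(1-1/p)$ into its power series first and then convert $\sum_{j\ge2}P(j)/j$ back to $\gamma-\beta$ at the end; these are the same computation in a different order, and both hinge on the same cancellation of the $j=1$ contributions.
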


\begin{proof}
We prove the expansion for $\nu(z)$.  A similar argument can be used to prove the expansion for $\nu^*(z)$.  The Weierstrass product formula \cite[Theorem II.0.6]{tenenbaumintro} yields a Taylor expansion, for $|z|<1$,
\begin{align*}
\log\Gamma(z+1) = -\gamma z + \sum_{j\ge2}\zeta(j)\frac{(-z)^j}{j}
\end{align*}
and so
\begin{align}\label{WeierstrassGamma}
\frac{1}{\Gamma(z+1)} = \exp\bigg(\gamma z - \sum_{j\ge2} \zeta(j)\frac{(-z)^j}{j}\bigg).
\end{align}

Next, we have
\begin{align*}
\prod_p\Big(1-&\frac{1}{p}\Big)^{z}\Big(1-\frac{z}{p}\Big)^{-1}  = \exp\bigg(\sum_p z\log(1-\tfrac{1}{p})-\log(1-\tfrac{z}{p})\bigg)\\
& = \exp\bigg(z\sum_p\Big( \frac{1}{p} +\log(1-\tfrac{1}{p})\Big) + \sum_p\sum_{j\ge2} \frac{(z/p)^j}{j}\bigg) \\
&= \exp\bigg((\beta-\gamma)z + \sum_{j\ge2} P(j) \frac{z^j}{j}\bigg).
\end{align*}
Combining with \eqref{nu} gives the result.
\end{proof}

From the first assertion of Lemma \ref{taylorexpand}, we may Taylor expand $\nu$ as
\begin{align*}
\nu(z) & = \prod_{j\ge1}\exp\big(c_j z^j/j\big) = \prod_{j\ge1}\sum_{n_j\ge0}\frac{\big(c_j z^j/j\big)^{n_j}}{n_j!} \ = \ \sum_{k\ge0} z^k \sum_{n_1+2n_2+\cdots = k}\prod_{j\ge1}\frac{(c_j/j)^{n_j}}{n_j!}
\end{align*}
from which we see that
\begin{align}\label{eq:nukpartition}
\nu_k := \frac{\nu^{(k)}(0)}{k!} = \sum_{n_1+2n_2+\cdots = k}\prod_{j\ge1}\frac{(c_j/j)^{n_j}}{n_j!}.
\end{align}
Note that $\nu_0 = 1, \nu_1 = \beta$, and recall that $V_k(X) = \sum_{j=0}^k\nu_{k-j}X^j/j!$.

On the other hand, with $R_k$ defined as in Theorem \ref{mainthm}, we have
\begin{align*}
R_k(X+\beta) & = \sum_{n_1+2n_2+\cdots = k}\prod_{j\ge2}\frac{(P(j)/j)^{n_j}}{n_j!}\sum_{i=0}^{n_1}\frac{\lambda_{i,n_1} X^i}{n_1!},\\
&\quad\text{where}\quad \lambda_{i,n} = \frac{n!}{i!}\sum_{m=0}^{n-i}\frac{(\beta-\gamma)^{n-m-i}}{(n-m-i)!} \frac{(1/\Gamma)^{(m)}(1)}{m!}
\end{align*}
is defined as in \eqref{lambda}, so that
\begin{align*}
R_k(X+\beta) & = \sum_{i=0}^k \frac{X^i}{i!}\sum_{n_1+2n_2+\cdots = k}\prod_{j\ge2}\frac{(P(j)/j)^{n_j}}{n_j!}\sum_{m=0}^{n_1-i} \frac{(\beta-\gamma)^{n_1-m-i}}{(n_1-m-i)!} \frac{(1/\Gamma)^{(m)}(1)}{m!}\\
& = \sum_{i=0}^k \frac{X^i}{i!}\sum_{n_1+2n_2+\cdots = k-i}\prod_{j\ge2}\frac{(P(j)/j)^{n_j}}{n_j!}\sum_{m=0}^{n_1} \frac{(\beta-\gamma)^{n_1-m}}{(n_1-m)!} \frac{(1/\Gamma)^{(m)}(1)}{m!} \\
& = \sum_{i=0}^k \frac{X^i}{i!}\sum_{n_1+2n_2+\cdots = k-i}\frac{G^{(n_1)}(0)}{n_1!}\prod_{j\ge2}\frac{(P(j)/j)^{n_j}}{n_j!} =: \sum_{i=0}^k \mu_{k-i}\frac{X^i}{i!} 
\end{align*}
via $n_1\mapsto n_1-i$ (note $n_1\ge i$, otherwise the inner sum on $m$ vanishes), where by the product rule
\begin{align*}
\sum_{m=0}^{n_1}\frac{(\beta-\gamma)^{n_1-m}}{(n_1-m)!} \frac{(1/\Gamma)^{(m)}(1)}{m!} & = \frac{1}{n_1!}\frac{d^{n_1}}{dz^{n_1}}\Big[\frac{e^{(\beta-\gamma) z}}{\Gamma(z+1)}\Big]_{z=0} = \frac{G^{(n_1)}(0)}{n_1!},
\end{align*}
for $G(z) = e^{(\beta-\gamma) z}/\Gamma(z+1)$. Also by the product rule,
\begin{align*}
\mu_k : & = \sum_{n_1=0}^k\frac{G^{(n_1)}(0)}{n_1!}\sum_{2n_2+\cdots = k-n_1}\prod_{j\ge2}\frac{(P(j)/j)^{n_j}}{n_j!}\\ & = \sum_{n_1+m=k}\frac{G^{(n_1)}(0)}{n_1!}\frac{1}{m!}\frac{d^m}{dz^m}\Big[\exp\Big(\sum_{j\ge2}P(j)\frac{z^j}{j}\Big) \Big]_{z=0}\\
& = \frac{1}{k!}\frac{d^k}{dz^k}\Big[G(z)\exp\Big(\sum_{j\ge2}P(j)\frac{z^j}{j}\Big) \Big]_{z=0} = \frac{\nu^{(k)}(0)}{k!} = \nu_k,
\end{align*}
recalling the expansion \eqref{nutaylor}.  This completes the proof of Theorem \ref{mainthm2}.

\section{Proof of Equivalence of Theorems \ref{tenenbaumthm} and \ref{qihuthm}}\label{qihutenenb}

In this section we provide a direct proof that the coefficients appearing in Tenenbaum's formula \eqref{tenenbaumsresult} and the formula \eqref{qihuresult} are equal.  First, we let $b_m = a_m/m!$ so that
\begin{align*}
b_m = \frac{1}{m}\sum_{i=1}^{m-1}(-1)^i \zeta(i+1)b_{m-1-i} = \frac{1}{m}\sum_{j=2}^{m}(-1)^{j-1} \zeta(j)b_{m-j}.
\end{align*}
This recursive identity for $b_m$ implies that, by \cite[Lemma 2.2]{lichtman2}, $b_m$ is given explicitly as
\begin{align*}
b_m = \sum_{2n_2 + 3n_3\cdots = m}\prod_{j\ge2}\frac{((-1)^{j-1} \zeta(j)/j)^{n_j}}{n_j!} = \frac{1}{m!}\frac{\dd^m}{\dd z^m}\Big[\exp\Big(-\sum_{j\ge2} \zeta(j)(-z)^j/j\Big)\Big]_{z=0}
\end{align*}
and so the Weierstrass product formula in \eqref{WeierstrassGamma} gives
\begin{align}
a_m = m!\,b_m &= \frac{\dd^m}{\dd z^m}\Big[\frac{e^{-\gamma z}}{\Gamma(z+1)}\Big]_{z=0} = \sum_{i=0}^m \binom{m}{i} (-\gamma)^{m-i}\Big(\frac{1}{\Gamma}\Big)^{(i)}(1).
\end{align}
Thus by the binomial theorem, the main term in \eqref{qihuresult} is
\begin{align*}
{\mathcal S}_k(x)+{\rm err.}& =\sum_{m=0}^k \binom{k}{m} a_m(\log_2 x+\beta)^{k-m}\\
& = \sum_{m=0}^k \binom{k}{m} a_m \sum_{j=0}^{k-m}\binom{k-m}{j}\beta^{k-m-j}(\log_2 x)^j\\
& = \sum_{m=0}^k \binom{k}{m} \sum_{i=0}^m \binom{m}{i} (-\gamma)^{m-i}\Big(\frac{1}{\Gamma}\Big)^{(i)}(1) \sum_{j=0}^{k-m}\binom{k-m}{j}\beta^{k-m-j}(\log_2 x)^j\\
& =  \sum_{j=0}^k (\log_2 x)^j \sum_{i=0}^{k-j}\Big(\frac{1}{\Gamma}\Big)^{(i)}(1)\sum_{m=i}^{k-j}
\frac{k!}{i!(m-i)!j!(k-m-j)!} (-\gamma)^{m-i}\beta^{k-m-j}\\
& =  \sum_{j=0}^k (\log_2 x)^j \sum_{i=0}^{k-j}\Big(\frac{1}{\Gamma}\Big)^{(i)}(1)\frac{k!}{i!j!(k-j-i)!}\sum_{m=0}^{k-j-i} \binom{k-i-j}{m} (-\gamma)^m\beta^{k-m-i-j}
\end{align*}
Hence we obtain Tenenbaum's main term in \eqref{tenenbaumsresult},
\begin{align*}
{\mathcal S}_k(x)+{\rm err.}
& =  \sum_{j=0}^k (\log_2 x)^j\sum_{i=0}^{k-j}\binom{k}{i,j,k-j-i}(\beta-\gamma)^{k-j-i}\Big(\frac{1}{\Gamma}\Big)^{(i)}(1) 
\ = \ S_k(\log_2 x).
\end{align*}
This completes the proof.

\section{The Proof of Theorem \ref{nuexpand}}\label{recurrence}

We first establish the following recurrence relations for the sequences $(\nu_k)_{k=0}^\infty$ and $(\nu^*_k)_{k=0}^\infty$. 
These recurrences are analogous to those of $(d_k)_{k=0}^\infty$ in the smooth setting of Theorem \ref{thm:smooth} above, as well as Proposition 3.1 from \cite{lichtman}.

\begin{prop}\label{prop:recurrence}
Define $c_1=c^*_1=\beta$, $c_j = P(j)-(-1)^j\zeta(j)$ and $c^*_j = (-1)^{j+1}(P(j)+\zeta(j))$, $j\ge 2$.  Then the sequences $(\nu_k)_{k=0}^\infty$ and $(\nu^*_k)_{k=0}^\infty$ are given recursively by $\nu_0=\nu^*_0=1$,
\begin{displaymath}
\nu_k = \frac{1}{k}\sum_{j=1}^k \nu_{k-j} c_j\quad \text{and}\quad \nu^*_k = \frac{1}{k}\sum_{j=1}^k \nu^*_{k-j} c^*_j.
\end{displaymath}
\end{prop}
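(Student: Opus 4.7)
The plan is to derive both recurrences by the standard trick of logarithmic differentiation applied to the Taylor expansions from Lemma \ref{taylorexpand}. Since the two cases are structurally identical, I will describe the argument for $(\nu_k)$ and note that $(\nu^*_k)$ follows mutatis mutandis.

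By Lemma \ref{taylorexpand}, for $|z|<1$ we may write $\nu(z)=\exp(L(z))$ where $L(z)=\sum_{j\ge 1} c_j z^j/j$. Differentiating gives $\nu'(z)=\nu(z)L'(z)$, where $L'(z)=\sum_{j\ge 1}c_j z^{j-1}$, and this identity persists as an identity of formal power series near $0$. The plan is to expand both sides in powers of $z$ and match the coefficient of $z^{k-1}$.

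On the left, $\nu'(z)=\sum_{k\ge 1} k\nu_k z^{k-1}$, so the coefficient of $z^{k-1}$ is $k\nu_k$. On the right, I would multiply the two power series $\nu(z)=\sum_{m\ge 0}\nu_m z^m$ and $L'(z)=\sum_{j\ge 1}c_j z^{j-1}$ by Cauchy's convolution, obtaining
\begin{equation*}
\nu(z)L'(z)\;=\;\sum_{m\ge 0}\sum_{j\ge 1}\nu_m c_j\, z^{m+j-1},
\end{equation*}
whose $z^{k-1}$ coefficient equals $\sum_{j=1}^{k}\nu_{k-j}c_j$. Equating the two expressions yields $k\nu_k=\sum_{j=1}^k \nu_{k-j}c_j$, which upon dividing by $k$ is the claimed recurrence. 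The initial value $\nu_0=1$ comes from evaluating $\nu(0)=1$ directly from the definition \eqref{nu}. Repeating the argument with $\nu^*,L^*$, and the coefficients $c^*_j$ gives the second recurrence.

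There is essentially no obstacle: the only point that requires mild care is the index bookkeeping in the Cauchy product, and noting that the lower end $j=1$ is precisely what contributes at coefficient $z^{k-1}$ when paired with $\nu_{k-1}$. Everything is a formal power series manipulation justified by Lemma \ref{taylorexpand}, so no analytic subtleties arise.
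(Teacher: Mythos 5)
Your proof is correct, and it takes a genuinely different (and more streamlined) route than the paper's. You use logarithmic differentiation of the exponential generating function supplied by Lemma \ref{taylorexpand}: from $\nu(z)=\exp(L(z))$ you get $\nu'(z)=\nu(z)L'(z)$ and read off the recurrence by matching coefficients of $z^{k-1}$ in the Cauchy product — this is the standard Newton's-identity-style argument, and the index bookkeeping you carry out is exactly right. The paper instead starts from the explicit partition-sum formula \eqref{eq:nukpartition} for $\nu_k$ (itself extracted from the same exponential), and verifies $\sum_{r=1}^{k}\nu_{k-r}c_r=k\nu_k$ by a direct combinatorial manipulation: pulling the factor $c_r$ into the $r$-th part of the partition, re-indexing to partitions of $k$ with $n_r\ge1$, and summing $rn_r$ over $r$ to recover $k$. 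The two are really coefficient-level versus series-level manifestations of the same identity, but your version is shorter and avoids the partition-sum juggling; the paper's version has the minor advantage of being self-contained once \eqref{eq:nukpartition} is in hand and of making the combinatorial content fully visible. One small note: the paper frames its argument as an induction on $k$, but the induction hypothesis is never actually invoked — the computation is direct, just as yours is — so you have not skipped anything in omitting an inductive framework.
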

\begin{proof}
We prove the relation for $\nu_k$ and note that a similar argument gives that of $\nu^*_k$.  We proceed by induction on $k\ge1$. For the base case $k=1$, we have $\nu_1 = \beta = \nu_0c_1$. Now assume the claim for all $1\le r<k$. By the explicit formula \eqref{eq:nukpartition} for $\nu_k$ we have
\begin{align*}
\sum_{r=1}^k \nu_{k-r}c_r & = \sum_{r=1}^k c_r\,\sum_{n_1 +\cdots = k-r}\prod_{j\ge1} \frac{(c_j/j)^{n_j}}{n_j!} = \sum_{r=1}^k \sum_{n_1 +\cdots = k-r} \frac{c_r^{n_r+1}}{r^{n_r}\,n_r!}\prod_{\substack{j\ge1\\ j\neq r}} \frac{(c_j/j)^{n_j}}{n_j!}\\
& = \sum_{r=1}^k \sum_{\substack{n_1 +\cdots = k\\ n_r\ge1}}rn_r\,\prod_{j\ge1} \frac{(c_j/j)^{n_j}}{n_j!} = \sum_{n_1 +\cdots = k}\prod_{j\ge1} \frac{(c_j/j)^{n_j}}{n_j!}\sum_{\substack{1\le r\le k\\n_r\ge1}}rn_r = k\nu_k.
\end{align*}
In the last step, we dropped the condition $n_r\ge 1$ (since $rn_r=0$ for $n_r=0$) which gives $\sum_{r=1}^k rn_r=k$. This completes the proof.
\end{proof}

Note this recursion enables rapid computation of $\nu_k,\nu^*_k$ to high precision. We show results for $k\le 10$ below.
\[\begin{array}{rrr}
k & \nu_k & \nu^*_k\\
\hline
0 & 1 & 1       \\
1 & \beta=2.61497\cdot10^{-1} & \beta=2.61497\cdot10^{-1}\\
2 & -5.62153\cdot10^{-1} & -1.01440\cdot 10^0\\
3 & 3.05978\cdot10^{-1} & 1.87717\cdot 10^{-1}\\
4 & 2.62973\cdot10^{-2}  & 3.44297\cdot 10^{-1}\\
5 & -6.44501\cdot10^{-2} & -1.86153\cdot 10^{-1}\\
6 & 3.64064\cdot10^{-2}   & -1.50297\cdot 10^{-2}\\
7 & -4.70865\cdot10^{-3} & 4.29836\cdot 10^{-2}\\
8 & -4.33984\cdot10^{-4}     & -1.30388\cdot 10^{-2}\\
9 & 1.5085\cdot 10^{-3} & -1.57532\cdot 10^{-3}\\
10 & -1.83548\cdot10^{-4}    & 2.17630\cdot 10^{-3}\\    
\end{array}\]

Now we prove the main theorem of the section.

\begin{proof}[Proof of Theorem \ref{nuexpand}]
Note that $\nu(z) = G(z)C(z)$, where
\begin{align*}
G(z) & := \frac{1}{\Gamma(z+1)}\prod_p \Big(1-\frac{1}{p}\Big)^z e^{z/p} = \frac{e^{(\beta-\gamma)z}}{\Gamma(z+1)},\\
C(z) & := \prod_{p}\Big(1-\frac{z}{p}\Big)^{-1} e^{-z/p} = \exp(\sum_{j\ge2}P(j)\frac{z^j}{j}).
\end{align*}
By the product rule, we thus have
\begin{align}\label{eq:findvk}
\nu_k = \sum_{n=0}^k\frac{G^{(n)}(0)}{n!} d_{k-n},
\end{align}
where
\begin{align*}
d_k & := \frac{1}{k!}C^{(k)}(0) = \sum_{2n_2+3n_3+\cdots=k}\prod_{j\ge2}\frac{(P(j)/j)^{n_j}}{n_j!}.
\end{align*}
Now we appeal to Theorem \ref{thm:smooth}, which gives
\begin{align*}
d_k = \sum_{p<q}\delta_p \,p^{-k} \ + \ O_q(q^{-k})
\end{align*}
for any prime $q$, where 
\begin{align}\label{eq:findap}
\delta_p & := e^{-1}\prod_{p'\neq p}\Big(1-\frac{p}{p'}\Big)^{-1}e^{-p/p'} = \lim_{z\to p}(1-z/p) C(z).
\end{align}
%Observe that $\nu$ and $C$ have poles at the primes while $G$ is entire, so
%\begin{align}
%\lim_{z\to p}(1-z/p)\,\nu(z) = G(p)\lim_{z\to p}(1-z/p)\,C(z) = G(p)\delta_p.
%\end{align}
Hence \eqref{eq:findvk} becomes
\begin{align}\label{nukexpansion}
\nu_k = \sum_{p<q}\delta_p\sum_{n=0}^k\frac{G^{(n)}(0)}{n!}\,p^{n-k} \ + \ O_q\Big(\sum_{n=0}^k\frac{G^{(n)}(0)}{n!}q^{n-k}\Big).
\end{align}

Since $G$ is entire, we have by estimate \eqref{nukexpansion} that
\begin{displaymath}
\begin{aligned}
\nu_k &= \sum_{p<q}\delta_p p^{-k}\left(G(p)-\sum_{n>k}\frac{G^{(n)}(0)}{n!}p^n\right)+O_q(G(q)q^{-k})\\
&= \sum_{p<q}\beta_p p^{-k} \ + \ O_q(q^{-k}),
\end{aligned}
\end{displaymath}
where the last equality holds by Lemmas \ref{deltabeta} and \ref{lem:Gk0bound} below.\qedhere
\end{proof}

\begin{lem}\label{deltabeta}
We have $\delta_p G(p) = \beta_p$ for each prime $p$.
\end{lem}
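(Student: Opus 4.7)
The plan is to prove the identity by directly substituting the definitions and then exploiting the product formula for the Mertens constant $\beta$ given in \eqref{mertensconstant} to cancel the exponential factors.

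First I would write $G(p) = e^{(\beta-\gamma)p}/\Gamma(p+1) = e^{(\beta-\gamma)p}/p!$. The central step is to convert the $e^{(\beta-\gamma)p}$ factor into a product over primes. From \eqref{mertensconstant}, $\beta - \gamma = \sum_{p'}\bigl(1/p' + \log(1-1/p')\bigr)$, which (since $p$ is a fixed constant) gives the absolutely convergent product
\begin{displaymath}
e^{(\beta-\gamma)p} \;=\; \prod_{p'} e^{p/p'}\Bigl(1-\tfrac{1}{p'}\Bigr)^{p}.
\end{displaymath}
I would then split off the factor corresponding to $p'=p$, which contributes $e \cdot (1-1/p)^p$, while the remaining factor $\prod_{p'\neq p}e^{p/p'}(1-1/p')^p$ ranges over the same set of primes appearing in the definition of $\delta_p$.

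Next I would multiply by $\delta_p = e^{-1}\prod_{p'\neq p}(1-p/p')^{-1}e^{-p/p'}$. The leading $e^{-1}$ cancels the $e$ just isolated, and pairing up the $e^{-p/p'}$ and $e^{p/p'}$ factors over $p' \neq p$ makes all remaining exponentials cancel. What survives is
\begin{displaymath}
\delta_p\,G(p) \;=\; \frac{(1-1/p)^p}{p!}\prod_{p'\neq p}\Bigl(1-\tfrac{p}{p'}\Bigr)^{-1}\Bigl(1-\tfrac{1}{p'}\Bigr)^{p},
\end{displaymath}
which is exactly $\beta_p$ by definition \eqref{betap}.

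The only subtlety is justifying the rearrangement into an infinite product of primes: one must ensure absolute convergence of $\prod_{p'\neq p}(1-p/p')^{-1}(1-1/p')^{p}e^{0}$, so that the cancellation of the exponential factors across the two infinite products is legitimate and not merely formal. This follows from the estimate $(1-p/p')^{-1}(1-1/p')^{p} = 1 + O_p(1/p'^{2})$ as $p'\to\infty$, which is the same convergence behavior that makes \eqref{nu} define an analytic function of $z$ in the first place. With this justification in hand, the manipulations above are rigorous and the identity is immediate.
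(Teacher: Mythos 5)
Your proof is correct and follows essentially the same route as the paper: both rely on the Mertens-constant identity \eqref{mertensconstant} to pass between $e^{(\beta-\gamma)p}$ and the Euler-type product over primes, after which the exponential factors cancel pairwise. The only cosmetic difference is that you expand $e^{(\beta-\gamma)p}$ into a product and simplify $\delta_p G(p)$ directly, whereas the paper forms the quotient $\delta_p G(p)/\beta_p$ and collapses it back to the exponential to conclude it equals $1$; your closing remark on absolute convergence is a sensible addition but addresses the same implicit point.
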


\begin{proof}
By \eqref{betap}, \eqref{eq:findap}, and the definition of $G$, we have
\begin{align*}
\frac{\delta_p G(p)}{\beta_p} &= e^{(\beta-\gamma)p-1}\left(1-\frac{1}{p}\right)^{-p}\prod_{p'\ne p}\left(1-\frac{1}{p'}\right)^{-p}e^{-p/p'} = e^{(\beta-\gamma)p}\prod_{p'}\left(1-\frac{1}{p'}\right)^{-p}e^{-p/p'}\\
&= e^{(\beta-\gamma)p}\exp\left(-p\sum_{p'}\left(\log\left(1-\frac{1}{p'}\right)+\frac{1}{p'}\right)\right) =1.
\end{align*}
\end{proof}

\begin{lem}\label{lem:Gk0bound}
For any $m\ge 2$, we have $G^{(k)}(0)/k! \ll_m m^{-k}$ as $k\to\infty$.
\end{lem}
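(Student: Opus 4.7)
The plan is a one-line application of Cauchy's estimates, exploiting the fact that $G$ is entire.

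First I would observe that $G(z) = e^{(\beta-\gamma)z}/\Gamma(z+1)$ is an entire function of $z$, since $1/\Gamma(z+1)$ is entire (of order $1$) and $e^{(\beta-\gamma)z}$ is entire. In particular $G$ is bounded on every compact set, so that for each fixed real $m\ge 2$ the quantity
\begin{equation*}
M(m) \ := \ \max_{|z|=m}\,|G(z)| \ < \ \infty.
\end{equation*}

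Next I would invoke Cauchy's integral formula for the $k$-th Taylor coefficient, integrating over the circle $|z|=m$:
\begin{equation*}
\frac{G^{(k)}(0)}{k!} \ = \ \frac{1}{2\pi i}\oint_{|z|=m}\frac{G(z)}{z^{k+1}}\,dz.
\end{equation*}
Estimating trivially by the maximum of $|G|$ on the contour and the length $2\pi m$ yields
\begin{equation*}
\Bigl|\tfrac{G^{(k)}(0)}{k!}\Bigr| \ \le \ \frac{M(m)}{m^{k}},
\end{equation*}
which is exactly the claim $G^{(k)}(0)/k! \ll_m m^{-k}$ with implied constant $M(m)$.

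There is essentially no obstacle here: the whole argument is a standard Cauchy bound, and the content is entirely that $G$ is entire (so that the radius $m$ may be chosen arbitrarily large). In fact one could choose any radius $R>m$ to obtain the slightly stronger bound $\ll_{m,R} R^{-k}$, but the statement as written suffices for the use made of it in the preceding proof of Theorem \ref{nuexpand}.
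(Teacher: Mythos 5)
Your proof is correct, and it is noticeably shorter than the one in the paper, which takes a genuinely different route. You invoke the Cauchy integral formula on the circle $|z|=m$ and use only that $G$ is entire; the bound $|G^{(k)}(0)|/k!\le M(m)/m^k$ is immediate, with implied constant $M(m)=\max_{|z|=m}|G(z)|$, and this works for any positive real radius (so the restriction $m\ge 2$ in the statement is not even needed for your argument). The paper instead reduces to the weaker intermediate estimate $G^{(k)}(0)/k!\ll_m (k+1)^m m^{-k}$ and proves it by a purely real/combinatorial computation: it factors $G=G_0 G_1$ where $G_0(z)=e^{\beta z}\prod_{n<m}(1+z/n)e^{-z/n}$ is a polynomial times an exponential and $G_1$ involves the tails $\bar\zeta(j)=\sum_{n\ge m}n^{-j}\le (m+1)/m^j$, dominates $G_1$ term-by-term by an explicit majorant $\widetilde G_1(z)=e^{-(m+1)z/m}(1-z/m)^{-(m+1)}$, and combines the pieces with the product rule. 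The paper's version is longer but stays within the formal power-series and elementary-estimate framework that the rest of the article emphasizes, and it exposes the source of the geometric decay (the tail zeta values) rather than appealing to compactness of $|z|=m$; your version buys brevity at the price of a mildly nonconstructive constant $M(m)$ coming from a maximum on a circle. Both are valid proofs of the lemma, and both yield the decay rate actually used in the proof of Theorem \ref{nuexpand}, where one chooses $m$ at least as large as the cutoff prime $q$ so that $\sum_{n>k} G^{(n)}(0)p^n/n!\ll_q (p/m)^{k+1}$.
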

\begin{proof}
It suffices to prove $G^{(k)}(0)/k! \ll_m (k+1)^m m^{-k}$ for any $m\ge 2$, in which case taking $m'=m+1$ gives the result. Now to show this, given a fixed $m$ we have
\begin{align}
G(z) & = \exp(\beta z - \sum_{j\ge2}\zeta(j)(-z)^j/j) \nonumber\\
& = e^{\beta z}\exp(\sum_{1\le n<m}[\log(1+z/n)-z/n] - \sum_{j\ge 2} \bar\zeta(j)(-z)^j/j) = G_0(z) G_1(z)
\end{align}
letting $G_0(z) = e^{\beta z}\prod_{n< m}(1+z/n)e^{-z/n}$ and $G_1(z) = \exp(-\sum_{j\ge2}\bar\zeta(j)(-z)^j/j)$, where $\bar\zeta(j) := \sum_{n\ge m}n^{-j}$.  Note that $\bar\zeta(2)=\sum_{n\ge m}n^{-2}\le m^{-2}+\int_m^\infty t^{-2}\dd{t} = (m+1)/m^2$, so by induction $\bar\zeta(j) \le (m+1)/m^j$ for all $j\ge2$. Thus
\begin{align*}
G_1^{(k)}(0) &= (-1)^k k!\sum_{2n_2+\cdots=k}\prod_{j\ge 2} \frac{(-\bar\zeta(j)/j)^{n_j}}{n_j!},\\
|G_1^{(k)}(0)|& \le  k!\sum_{2n_2+\cdots=k}\prod_{j\ge 2}\frac{((m+1)/j m^j)^{n_j}}{n_j!} = \widetilde G_1^{(k)}(0)
\end{align*}
for
\begin{align*}
\widetilde G_1(z) & := \exp((m+1)\sum_{j\ge 2}(z/m)^j/j)
%= \exp((m+1)[-\log(1-z/m)-\sum_{j<m}(z/m)^j])\\
= \frac{e^{-(m+1)(z/m)}}{(1-z/m)^{m+1}}.
\end{align*}
Note the derivatives
\begin{align*}
\frac{1}{k!}\frac{\dd^k}{\dd z^k}\Big[\frac{1}{(1-z/m)^{m+1}}\Big]_{z=0} &= \frac{m^{-k}}{m!}\prod_{1\le j\le m}(k+j) \le \frac{(k+1)^m}{m^k},\\
\frac{1}{k!}\frac{\dd^k}{\dd z^k}\big[e^{-(m+1)(z/m)}\big]_{z=0}  &= \frac{1}{k!}\big(-(m+1)/m\big)^k \le \frac{(1+1/m)^k}{k!}
\end{align*}
and since $\sum_{k\in K}u^k/k! \le e^{|u|}$ for any $u\in\mathbb{R}$, any set $K\subset \mathbb{N}$, by the product rule we have
\begin{align*}
\frac{\widetilde G_1^{(k)}(0)}{k!} \le \sum_{a+b=k}\frac{(a+1)^m}{m^a}\frac{(1+1/m)^b}{b!} &= m^{-k}\sum_{b=0}^k (k+1-b)^m\frac{(m+1)^b}{b!}\\
&\le e^{m+1} \frac{(k+1)^m}{m^k} \ \ll_m \  \frac{(k+1)^m}{m^k}.
\end{align*}
We also have
\begin{align*}
G_0(z) & = e^{\beta z}\prod_{j< m}(1+z/j)e^{-z/j} = \sum_{k\ge0} \frac{z^k}{k!}(\beta-\sum_{j<m}1/j)^k\prod_{j< m}(1+z/j).
\end{align*}
In particular $G_0^{(k)}(0) \ll_m |\beta-\sum_{j<m}1/j|^k$. Thus by the product rule for $G=G_0\cdot G_1$ we have
\begin{align*}
\frac{G^{(k)}(0)}{k!} = \sum_{b+c=k}\frac{G_0^{(b)}(0)}{b!}\frac{G_1^{(c)}(0)}{c!},\\
\frac{|G^{(k)}(0)|}{k!} \le \sum_{b+c=k}\frac{|G_0^{(b)}(0)|}{b!}\,\frac{\widetilde G_1^{(c)}(0)}{c!} &\ll_m \sum_{b+c=k} \frac{|\beta-\sum_{j<m}1/j|^b}{b!}\, \frac{(c+1)^m}{m^c}\\
&\ll_m \frac{(k+1)^m}{m^k}.
\end{align*}
This gives the claim as desired.
\end{proof}

\section{The Proofs of Theorems \ref{r2theorem} and \ref{explicitr2theorem}}\label{R2}

Recall that $E(x):= \sum_{p\le x} 1/p - (\log_2 x + \beta)$. Note $E(t)\ll_A (\log t)^{-A}$ for all $A> 0$, which implies $\int_2^\infty |E(t)| (\log t)^j /t \dd{t}$ converges for all $j$. So we may define the constants
\begin{align}\label{eq:defgammaj}
\gamma_j = \int_2^\infty E(t)(\log t)^{j-1}\frac{\dd{t}}{t}.
\end{align}
In particular, we have the relation $\alpha_j = (\log 2)^j(1/j-\log_22 -\beta )/j+\gamma_j$.

We cite some useful lemmas.

\begin{lem}[Rosser \& Schoenfeld {\cite[Theorems\ 5 \& 20]{rosser1962approximate}}]\label{RS}
We have
\begin{equation}\label{RS1}
    -1/(2\log^2 x) < E(x) < 1/\log^2 x,~~(x>1),
\end{equation}
\begin{equation}\label{RS3}
    0<E(x),~~(1<x\le 10^8).
\end{equation}
\end{lem}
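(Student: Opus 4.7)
The plan is to derive these explicit bounds on $E(x) = \sum_{p\le x}1/p - (\log_2 x + \beta)$ by reducing to explicit bounds on the Chebyshev function $\theta(x) = \sum_{p\le x}\log p$ coming from an explicit form of the prime number theorem, and then performing partial summation twice while tracking constants carefully.

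First I would recall (or quote from the zero-free region for $\zeta(s)$) an explicit PNT bound of the form
\begin{equation*}
|\theta(x)-x| \ \le \ \eta(x)\,x \qquad (x \ge x_1),
\end{equation*}
where $\eta(x) = C\exp(-c\sqrt{\log x})$ or better; for the range one actually needs, an effective bound of the shape $|\theta(x) - x| \le \varepsilon\, x/\log^2 x$ suffices. Next, introduce the auxiliary sum $F(x) := \sum_{p\le x}(\log p)/p$ and apply partial summation to $\theta(t)$:
\begin{equation*}
F(x) \ = \ \frac{\theta(x)}{x} + \int_{2^-}^{x}\frac{\theta(t)}{t^2}\,\dd t.
\end{equation*}
Substituting $\theta(t) = t + (\theta(t)-t)$, splitting the integral, and using the bound above one obtains an explicit form of Mertens' first theorem, $F(x) = \log x - \alpha_1 + r(x)$, with an effective bound on $|r(x)|$.

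Second, apply partial summation once more, now to $F$:
\begin{equation*}
\sum_{p \le x}\frac{1}{p} \ = \ \frac{F(x)}{\log x} + \int_{2^-}^{x}\frac{F(t)}{t\log^2 t}\,\dd t.
\end{equation*}
Plugging in $F(t) = \log t - \alpha_1 + r(t)$ and computing the elementary integrals gives $\sum_{p\le x}1/p = \log_2 x + \beta + E(x)$ with $E(x) = r(x)/\log x + \int_2^x r(t)/(t\log^2 t)\,\dd t + (\text{small boundary terms})$; the identification of the constant $\beta$ follows from \eqref{mertensconstant} using $\alpha_1 = \gamma + \sum_{m\ge 2}\sum_p (\log p)/p^m$. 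The two-sided bound $-1/(2\log^2 x) < E(x) < 1/\log^2 x$ then drops out once the constants in the PNT bound are small enough; this is exactly the bookkeeping that Rosser–Schoenfeld carry out in Theorems~5 and~20 of \cite{rosser1962approximate}.

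The main obstacle is twofold. The analytic part is verifying that the effective constants surviving the two partial summations fit inside the asymmetric window $(-1/(2\log^2 x),\,1/\log^2 x)$ uniformly for $x > 1$, which requires the PNT bound to be sharp enough and forces us to handle small $x$ separately. The small-$x$ range, in particular the positivity statement \eqref{RS3} for $1 < x \le 10^8$, cannot come from asymptotics at all; it has to be checked either by direct computer computation of $\sum_{p\le x}1/p - \log_2 x - \beta$ at prime arguments (where $E$ is piecewise monotone between jumps), or by combining the analytic lower bound valid for moderately large $x$ with an explicit finite check up to that cutoff. This computational/finite-verification step, rather than any single slick argument, is the real work in \cite{rosser1962approximate}.
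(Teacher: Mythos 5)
This lemma is stated in the paper as a cited result from Rosser and Schoenfeld (their Theorems 5 and 20); the paper offers no proof of its own, so there is nothing internal to compare your attempt against. Your sketch is a reasonable reconstruction of the broad strategy behind those theorems: start from an effective form of the prime number theorem for $\theta(x)$, pass to $\sum_{p\le x}(\log p)/p$ and then to $\sum_{p\le x}1/p$ by two rounds of partial summation, and close the argument for small $x$ by direct verification at the prime jumps. You also correctly identify the two real obstacles — the constant bookkeeping and the finite range — as the heart of the matter.

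Two caveats are worth flagging. First, the assertion that a bound of the shape $|\theta(x)-x|\le \varepsilon\,x/\log^2 x$ is what one needs oversimplifies the 1962 input: that clean a uniform bound was not available to Rosser and Schoenfeld, and the $1/\log^2 x$ quality of \eqref{RS1} in fact emerges from a rather delicate treatment involving the explicit formula, partial knowledge of the zeros of $\zeta(s)$, and secondary terms that survive the two partial summations. Invoking a hypothetical clean $\theta$-bound elides precisely the part of their argument that is hard. Second, you do not address where the asymmetric constants $1/2$ and $1$ in \eqref{RS1} come from; they are not arbitrary bookkeeping slack but reflect the sign and size of the boundary and secondary terms in the partial-summation expansion, together with the behavior near $x=1$ where $\log_2 x\to-\infty$ makes $E(x)$ large and positive, so the lower bound is far from tight there while the upper bound is delicate. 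As a blind sketch of a cited external theorem these are forgivable omissions, but a self-contained proof would have to confront both.
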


\begin{lem}[Dusart {\cite[Theorem 6.10]{dusart2010},\cite[Theorem 5.6]{dusart2016}}]\label{D} 
We have
\begin{equation}\label{D1}
    |E(x)|\le1/(10\log^2 x)+4/(15\log^3 x),~~(x\ge 10372),
\end{equation}
\begin{equation}\label{D2}
    |E(x)|\le 0.2/\log^3 x,~~(x\ge 2278383).\footnote{We were recently made aware of an issue in \cite{dusart2016} which affects \eqref{D2}.  This has been resolved in a paper \cite{broadbent} of Broadbent et al.\ which provides the necessary bounds on Chebyshev's function $\theta(x)$ to guarantee the validity of \eqref{D2}.}
\end{equation}
\end{lem}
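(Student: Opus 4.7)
The plan is to deduce both bounds from explicit estimates on Chebyshev's function $\theta(t) = \sum_{p\le t}\log p$, the standard input for all known explicit work on Mertens' second theorem.

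First I would establish an integral representation for $E(x)$. Starting from the Stieltjes identity $\sum_{p\le x}1/p = \int_{2^-}^x \dd{\theta(t)}/(t\log t)$, I would integrate by parts, substitute $\theta(t) = t + (\theta(t)-t)$, and evaluate the two elementary integrals $\int_2^x \dd{t}/(t\log t)$ and $\int_2^x \dd{t}/(t\log^2 t)$ in closed form. Matching the result against the defining limit of $\beta$---whose tail integral converges absolutely by the strong PNT bound $\theta(t)-t \ll_A t/\log^A t$---yields the clean identity
\begin{displaymath}
E(x) \;=\; \frac{\theta(x)-x}{x\log x} \;-\; \int_x^\infty \frac{\theta(t)-t}{t^2}\left(\frac{1}{\log t}+\frac{1}{\log^2 t}\right)\dd{t}.
\end{displaymath}

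Next I would insert explicit $\theta$-bounds. Taking $|\theta(t)-t|\le \eta(t)\cdot t$ with $\eta$ of the form $c_2/\log^2 t + c_3/\log^3 t$ valid above some threshold $t_0$---precisely the shape of the estimates secured in Broadbent et al.\ and referenced in the footnote---direct substitution followed by evaluation of the tail via $\int_x^\infty \dd{t}/(t\log^k t) = 1/((k-1)\log^{k-1} x)$ produces an estimate of the same shape as \eqref{D1}, with coefficients depending linearly on $(c_2,c_3)$. For \eqref{D2} I would instead plug in the sharper estimate $\eta(t)\le c/\log^3 t$ which becomes available above $2278383$, yielding a bound of order $1/\log^3 x$.

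The final step is to close the gap down to the asserted thresholds $10372$ and $2278383$, since the asymptotic $\theta$-bounds need not yet be tight there. I would perform a finite verification: compute $\sum_{p\le x}1/p$ and a rigorous approximation of $\beta$ via interval arithmetic, and check \eqref{D1}--\eqref{D2} on the remaining bounded range. The main obstacle is extracting the precise numerical constants $1/10$, $4/15$, and $0.2$: the orders of magnitude are automatic from the scheme above, but pinning down these coefficients requires (i) using the sharpest currently available explicit bounds on $\theta(t)-t$, which themselves rest on numerical verifications of the Riemann hypothesis up to large heights, and (ii) carefully tracking every lower-order contribution through the partial summation. The footnote flags exactly this issue---a flaw in \cite{dusart2016} affecting \eqref{D2} was repaired in \cite{broadbent} by establishing the necessary $\theta$-bounds.
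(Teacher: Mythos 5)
This lemma is not proved in the paper: it is cited verbatim from Dusart (\cite[Theorem 6.10]{dusart2010}, \cite[Theorem 5.6]{dusart2016}), with the footnote acknowledging that the validity of \eqref{D2} now rests on the $\theta$-bounds of Broadbent et al.\ \cite{broadbent}. So there is no internal proof to compare against.

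That said, your sketch is a correct high-level description of how these results are actually established in the cited sources. The integral representation you derive,
\begin{displaymath}
E(x) = \frac{\theta(x)-x}{x\log x} - \int_x^\infty \frac{\theta(t)-t}{t^2}\left(\frac{1}{\log t}+\frac{1}{\log^2 t}\right)\dd{t},
\end{displaymath}
checks out (integrate $\int \dd{\theta(t)}/(t\log t)$ by parts, split $\theta(t)=t+(\theta(t)-t)$, and identify $\beta$ via the $x\to\infty$ limit using the strong PNT tail bound). Plugging in an explicit $\eta$ with $|\theta(t)-t|\le \eta(t)\,t$ and evaluating $\int_x^\infty \dd{t}/(t\log^k t)=1/((k-1)\log^{k-1}x)$ does give bounds of the asserted shape, and a finite numerical verification handles the range between $10372$ (resp.\ $2278383$) and the threshold above which the $\theta$-bounds kick in. You also correctly flag that the specific constants $1/10$, $4/15$, $0.2$ and the lower thresholds are not automatic: they require the sharpest currently available explicit $\theta$-bounds (which rest on partial RH verification) and careful bookkeeping of every lower-order term---exactly the issue that the footnote is warning about for \eqref{D2}. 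In short, as a reconstruction of the external proof your outline is sound; as a proof of the stated numerical constants it is, deliberately, only a scheme.
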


%%%%%%
%%%%%%

For ease of notation, we define 
\begin{align*}
T(x):= {\mathcal R}_1(x)=\sum_{p\le x}\frac{1}{p}.
\end{align*}

\begin{lem}\label{integral}
For all $N\ge 0$, and $\gamma_j$ as in \eqref{eq:defgammaj}, we have
\begin{displaymath}
\int_2^{\sqrt x}\frac{E(t)}{t\log(x/t)}\dd{t} = \sum_{j=1}^N\frac{\gamma_j}{\log^j x} + O_N\left((\log x)^{-N-1}\right).
\end{displaymath}
\end{lem}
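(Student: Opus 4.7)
The plan is to write $\frac{1}{\log(x/t)}$ as a finite geometric expansion in powers of $\log t/\log x$, integrate term-by-term against $E(t)/t$, and then use the super-polynomial decay $E(t)\ll_A(\log t)^{-A}$ to control both the tail of each integral and the geometric remainder.

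\emph{Step 1 (Geometric expansion).} For $2\le t\le \sqrt{x}$ we have $\log t\le \tfrac{1}{2}\log x$, so $u:=\log t/\log x\in[0,1/2]$. The identity $\tfrac{1}{1-u}=\sum_{j=0}^{N-1}u^j+\tfrac{u^N}{1-u}$ yields
\[
\frac{1}{\log(x/t)} \;=\; \sum_{j=0}^{N-1}\frac{(\log t)^j}{(\log x)^{j+1}} \;+\; \frac{(\log t)^N}{(\log x)^N \log(x/t)}.
\]
Multiplying by $E(t)/t$ and integrating from $2$ to $\sqrt{x}$ gives
\[
\int_2^{\sqrt x}\frac{E(t)}{t\log(x/t)}\dd{t} \;=\; \sum_{i=1}^{N}\frac{1}{(\log x)^{i}}\int_2^{\sqrt x}\frac{E(t)(\log t)^{i-1}}{t}\dd{t} \;+\; R_N(x),
\]
where
\[
R_N(x) \;=\; \frac{1}{(\log x)^N}\int_2^{\sqrt x}\frac{E(t)(\log t)^N}{t\,\log(x/t)}\dd{t}.
\]

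\emph{Step 2 (Replacing truncated integrals by $\gamma_i$).} For each $1\le i\le N$, we write $\int_2^{\sqrt x}=\gamma_i-\int_{\sqrt x}^\infty$. Using $|E(t)|\ll_A(\log t)^{-A}$ with $A=N+i+1$, the tail satisfies
\[
\left|\int_{\sqrt x}^\infty\frac{E(t)(\log t)^{i-1}}{t}\dd{t}\right| \;\ll_{N}\; \int_{\sqrt x}^\infty\frac{(\log t)^{i-A-1}}{t}\dd{t} \;\ll_{N}\; (\log x)^{i-A} \;=\; (\log x)^{-N-1}.
\]
Dividing by $(\log x)^i$ gives a contribution bounded by $(\log x)^{-N-1-i}\le (\log x)^{-N-1}$.

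\emph{Step 3 (The geometric remainder).} Since $\log(x/t)\ge \tfrac{1}{2}\log x$ for $t\le\sqrt{x}$,
\[
|R_N(x)| \;\le\; \frac{2}{(\log x)^{N+1}}\int_2^{\sqrt x}\frac{|E(t)|(\log t)^N}{t}\dd{t}.
\]
Using $|E(t)|\ll(\log t)^{-N-2}$, the integrand is $\ll (\log t)^{-2}/t$, whose integral from $2$ to $\sqrt x$ is $O(1)$. Hence $R_N(x)\ll_N(\log x)^{-N-1}$. Combining Steps 2 and 3 yields the claimed asymptotic.

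The only mild subtlety is in Step 3: one might be tempted to apply $|E(t)|\ll(\log t)^{-N-1}$ and obtain only an $O(\log_2 x)$ factor from the resulting $\int dt/(t\log t)$. Taking $A=N+2$ rather than $A=N+1$ removes this logarithmic loss and delivers the stated error term cleanly.
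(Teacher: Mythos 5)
Your proof is correct and follows essentially the same approach as the paper: expand $1/\log(x/t)$ via a finite geometric series in $\log t/\log x$, extend each truncated coefficient integral $\int_2^{\sqrt{x}}$ to $\gamma_j = \int_2^\infty$ with a controlled tail, and use the super-polynomial decay $E(t)\ll_A(\log t)^{-A}$ to absorb both the tails and the geometric remainder into $O_N((\log x)^{-N-1})$. Your treatment of the remainder $R_N(x)$ in Step 3 — keeping the $(\log t)^N/(\log x)^N$ factor explicit and bounding $1/\log(x/t)\le 2/\log x$ before invoking decay with $A=N+2$ — is somewhat more transparent than the paper's terse $(1-O(2^{-N}))$ shorthand, and your closing remark about why $A=N+1$ would lose a factor of $\log_2 x$ is a good sanity check; the argument is otherwise the same.
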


\begin{proof}
Note for $t\le \sqrt{x}$, we have the geometric series 
\begin{align*}
\sum_{j=0}^{N-1}\Big(\frac{\log t}{\log x}\Big)^j = \frac{1-(\log t/\log x)^N}{1-(\log t/\log x)} = \frac{\log x}{\log(x/t)}(1-O(2^{-N})).
\end{align*}
So recalling $E(t)\ll_A (\log t)^{-A}$ for all $A>0$, we may interchange sum and integral to obtain
\begin{align*}
\int_2^{\sqrt x}\frac{E(t)}{t\log(x/t)}\dd{t} = \sum_{j=0}^{N-1}\frac{1+O(2^{-N})}{(\log x)^{j+1}}\int_2^{\sqrt x} (\log t)^j E(t)\frac{\dd{t}}{t}
 \ = \ \sum_{j=1}^N \frac{\gamma_j}{(\log x)^j} + O_N((\log x)^{-N-1})
\end{align*}
since for all $j\le N$, letting $A=2(N+1)$,\footnote{This corrects the published version, where it is stated that $A=2N$.}
\begin{align*}
\int_{\sqrt x}^\infty (\log t)^j E(t)\frac{\dd{t}}{t} \ \ll_N \ (\log x)^{-N-1}.
\end{align*}
This completes the proof.
\end{proof}

We give the following limit characterization of the constants $\alpha_j$.

\begin{lem}\label{lem:alphaj}
For all $N\ge j\ge 1$, and $\alpha_j$ as in \eqref{eq:alphaj}, we have
\begin{align*}
\alpha_j = \frac{1}{j}\left(\frac{\log^j x}{j} - \sum_{p\le x}\frac{\log^j p}{p}\right) + O_N((\log x)^{-N}).
\end{align*}
\end{lem}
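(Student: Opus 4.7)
\medskip

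The plan is to start from $\sum_{p\le x}\frac{\log^j p}{p}$, apply partial summation using $T(t):=\sum_{p\le t}1/p$, and then substitute $T(t)=\log_2 t+\beta+E(t)$. Writing Abel summation out, we obtain
\begin{align*}
\sum_{p\le x}\frac{\log^j p}{p}
= T(x)\log^j x - j\int_2^x T(t)\frac{\log^{j-1} t}{t}\dd{t}.
\end{align*}
The point is that the $\log_2 t + \beta$ portion of $T(t)$ can be integrated explicitly, while the $E(t)$ portion is exactly the tail that will reconstruct $\alpha_j$.

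Next I would evaluate the two elementary integrals. Substituting $u=\log t$ gives
$j\int_2^x \frac{\log_2 t \cdot \log^{j-1} t}{t}\dd{t} = \log^j x\log_2 x - \log^j x /j - \log^j 2\cdot \log_2 2 + \log^j 2/j$,
and $j\beta\int_2^x \log^{j-1}(t)/t\,\dd{t} = \beta(\log^j x - \log^j 2)$. Combining, the $\log_2 x$ and $\beta$ contributions on both sides cancel, leaving
\begin{align*}
\frac{1}{j}\left(\frac{\log^j x}{j}-\sum_{p\le x}\frac{\log^j p}{p}\right) = \frac{\log^j 2}{j}\Big(\tfrac{1}{j}-\log_2 2-\beta\Big) + \int_2^x E(t)\frac{\log^{j-1} t}{t}\dd{t} - \frac{E(x)\log^j x}{j}.
\end{align*}
Comparing with the definition \eqref{eq:alphaj} of $\alpha_j$, the difference between this expression and $\alpha_j$ is exactly
\begin{align*}
-\frac{E(x)\log^j x}{j} \;-\; \int_x^\infty E(t)\frac{\log^{j-1} t}{t}\dd{t}.
\end{align*}

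Finally, both of these remainder terms are controlled using the strong form of the prime number theorem $E(t)\ll_A (\log t)^{-A}$ for every $A>0$ (already in force throughout the paper). Choosing $A=N+j$ makes the boundary piece $E(x)\log^j x\ll_N (\log x)^{-N}$ and also bounds the tail integral by $\int_x^\infty (\log t)^{j-1-A}/t\,\dd{t}\ll_N (\log x)^{-N}$. This yields the claimed error $O_N((\log x)^{-N})$ and completes the proof.

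The calculation is genuinely routine; the only place requiring a little care is the bookkeeping in the partial summation that produces the precise boundary constant $\frac{\log^j 2}{j}(1/j-\log_2 2-\beta)$ appearing in the definition of $\alpha_j$. Provided the boundary terms at $t=2$ are tracked correctly, everything collapses cleanly into the form stated.
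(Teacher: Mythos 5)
Your proof is correct and follows essentially the same path as the paper's: Abel summation, substituting $T(t)=\log_2 t+\beta+E(t)$, explicitly integrating the $\log_2 t$ and $\beta$ pieces, matching the boundary constant $\frac{\log^j 2}{j}(1/j-\log_2 2-\beta)$ against the definition of $\alpha_j$, and then bounding the remainder $-\frac{E(x)\log^j x}{j}-\int_x^\infty E(t)\frac{\log^{j-1}t}{t}\,\dd{t}$ using $E(t)\ll_A(\log t)^{-A}$. The only cosmetic difference is the choice of $A$: you take $A=N+j$, the paper takes $A=2N$ (which suffices since $N\ge j$); both give the claimed $O_N((\log x)^{-N})$.
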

\begin{proof}
By partial summation and Mertens' second theorem, we have
\begin{displaymath}
\begin{aligned}
\sum_{p\le x}\frac{\log^j p}{p} &= T (x)\log ^j x -j\int_2^x T(t)\log^{j-1} t \frac{\dd{t}}{t}\\
&= (\log_2 x +\beta + E (x))\log^j x - j\int_2^x (\log_2 t +\beta + E(t)) \log^{j-1} t\frac{\dd{t}}{t}\\
&= (\log_2 x+E(x))\log^j x - \left[u^j(\log u-1/j)\right]_{\log 2}^{\log x}+ \beta \log^j 2 -j\int_2^x E(t) \log^{j-1} t \frac{\dd{t}}{t}\\
& = E(x)\log^j x + \log^j x/j +\log^j 2(\log_2 2+\beta-1/j)  -j\int_2^x E(t) \log^{j-1} t \frac{\dd{t}}{t}.
\end{aligned}
\end{displaymath}
Hence by definition of $\alpha_j$,
\begin{align*}
\frac{1}{j}\left(\frac{\log^j x}{j} - \sum_{p\le x}\frac{\log^j p}{p}\right) = \alpha_j -E(x)\log^j x/j - \int_x^\infty E(t) \log^{j-1} t \frac{\dd{t}}{t}.
\end{align*}
Recalling $E(x)\ll_N (\log x)^{-2N}$ completes the proof.
\end{proof}

\begin{lem}\label{R2lemma}
We have
\begin{displaymath}
{\mathcal R}_2(x) = \sum_{p\le \sqrt{x}}\frac{1}{p}T\left(x/p\right) - \frac{1}{2}T(\sqrt{x})^2 + \frac{1}{2}\sum_{p\le \sqrt{x}}\frac{1}{p^2}.
\end{displaymath}
\end{lem}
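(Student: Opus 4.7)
The plan is to rewrite $\mathcal{R}_2(x)$ in terms of the ordered sum $\mathcal{S}_2(x) = \sum_{p_1 p_2 \le x} 1/(p_1 p_2)$, and then apply Dirichlet's hyperbola method to $\mathcal{S}_2$ with splitting point $\sqrt{x}$. First I would observe that, in the ordered sum $\mathcal{S}_2(x)$, each semiprime $n = pq$ with $p \neq q$ arises from two ordered factorizations $(p,q)$ and $(q,p)$, whereas each prime square $n = p^2$ arises from exactly one. Comparing this to $\mathcal{R}_2(x) = \sum_{p\le q,\, pq\le x} 1/(pq)$, which counts each $n$ once, gives the algebraic identity
\[
\mathcal{R}_2(x) \;=\; \tfrac{1}{2}\mathcal{S}_2(x) \;+\; \tfrac{1}{2}\sum_{p^2\le x}\frac{1}{p^2}.
\]

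Next I would apply the hyperbola method to $\mathcal{S}_2(x)$: for any pair $(p_1,p_2)$ with $p_1 p_2 \le x$, at least one of $p_1, p_2$ is $\le \sqrt{x}$, so by inclusion-exclusion
\[
\mathcal{S}_2(x) \;=\; \sum_{p_1\le\sqrt{x}}\frac{1}{p_1}\sum_{p_2\le x/p_1}\frac{1}{p_2} \;+\; \sum_{p_2\le\sqrt{x}}\frac{1}{p_2}\sum_{p_1\le x/p_2}\frac{1}{p_1} \;-\; \sum_{p_1,p_2\le\sqrt{x}}\frac{1}{p_1 p_2},
\]
where the last term subtracts off the pairs where both $p_1,p_2 \le \sqrt{x}$ (which get counted twice in the first two sums). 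By symmetry the first two sums are equal to $\sum_{p\le\sqrt{x}} T(x/p)/p$, and the third is precisely $T(\sqrt{x})^2$. Hence
\[
\mathcal{S}_2(x) \;=\; 2\sum_{p\le\sqrt{x}}\frac{T(x/p)}{p} \;-\; T(\sqrt{x})^2.
\]

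Finally, substituting this into the identity from the first step, and observing that the condition $p^2 \le x$ is the same as $p \le \sqrt{x}$, yields
\[
\mathcal{R}_2(x) \;=\; \sum_{p\le\sqrt{x}}\frac{T(x/p)}{p} \;-\; \tfrac{1}{2}T(\sqrt{x})^2 \;+\; \tfrac{1}{2}\sum_{p\le\sqrt{x}}\frac{1}{p^2},
\]
as desired. There is no serious obstacle here; the only subtlety is the careful bookkeeping in the hyperbola step to make sure the diagonal region where both primes are $\le \sqrt{x}$ is subtracted exactly once, and the parallel bookkeeping for how $p = q$ contributes differently to $\mathcal{R}_2$ and $\mathcal{S}_2$. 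Both are exact identities, so no asymptotic estimates are needed at this stage.
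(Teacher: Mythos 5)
Your proof is correct, but it takes a route through the ordered sum $\mathcal{S}_2(x)$ and the hyperbola method, whereas the paper argues directly on $\mathcal{R}_2(x)$. The paper writes $\mathcal{R}_2(x)=\sum_{p\le\sqrt x}\sum_{p<q\le x/p}\frac{1}{pq}+\sum_{p\le\sqrt x}\frac{1}{p^2}$, expresses the inner sum as $\frac{1}{p}\big(T(x/p)-T(p)\big)$, and then dispatches the subtracted piece via the elementary identity $2\sum_{p\le\sqrt x}\frac{T(p)}{p}=T(\sqrt x)^2+\sum_{p\le\sqrt x}\frac{1}{p^2}$. Your decomposition instead first records the exact relation $\mathcal{R}_2(x)=\tfrac12\mathcal{S}_2(x)+\tfrac12\sum_{p\le\sqrt x}p^{-2}$ (which is the $k=2$ case of the paper's general passage from $\mathcal{R}_k$ to $\mathcal{S}_k$ underlying Propositions \ref{inclusionexclusion}--\ref{prop1}), and then applies Dirichlet's hyperbola identity $\mathcal{S}_2(x)=2\sum_{p\le\sqrt x}T(x/p)/p - T(\sqrt x)^2$. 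Both are exact counting identities with no estimates involved. Your version is more systematic and fits the paper's broader framework of reducing $\mathcal{R}_k$ to $\mathcal{S}_k$, at the small cost of an extra intermediate step; the paper's proof is shorter and self-contained for the specific case $k=2$.
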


\begin{proof}
Note that $\mathbb{N}_2\cap [1,x] = \{pq\le x: p < q\}\cup \{p^2: p\le \sqrt{x}\}$.  Also, we have $pq\le x, p<q$ if and only if $p\le \sqrt{x}$ and $p<q\le x/p$.  Therefore,
\begin{displaymath}
\begin{aligned}
{\mathcal R}_2(x) &= \sum_{p\le \sqrt{x}}\sum_{p<q\le \frac{x}{p}}\frac{1}{pq} + \sum_{p\le \sqrt{x}}\frac{1}{p^2}\\
& = \sum_{p\le \sqrt x}\frac{1}{p}\left(T\left(x/p\right)-T(p)\right) + \sum_{p\le \sqrt{x}}\frac{1}{p^2}.
\end{aligned}
\end{displaymath}
Thus it suffices to note that by the multinomial theorem, we have
\begin{displaymath}
2\sum_{p\le \sqrt x}\frac{T(p)}{p} = \sum_{p\le \sqrt x}\frac{1}{p^2} + \left(\sum_{p\le \sqrt x}\frac{1}{p}\right)^2.\qedhere
\end{displaymath}
\end{proof}

\begin{proof}[Proof of Theorem \ref{r2theorem}]
By Lemma \ref{R2lemma}, ${\mathcal R}_2(x)=A_1(x)+A_2(x)+A_3(x)$, where
\begin{displaymath}
\begin{aligned}
& A_1(x):=\sum_{p\le \sqrt x}\frac{1}{p}T\left(x/p\right)=\sum_{p\le \sqrt x}\frac{\log_2(x/p)+\beta}{p}+\sum_{p\le \sqrt x}\frac{E\left(x/p\right)}{p},\\
& A_2(x) := -\frac{1}{2}T(\sqrt x)^2 = -\frac{1}{2}(\log_2\sqrt x + \beta + E(\sqrt x))^2,\\
& A_3(x) := \frac{1}{2}\sum_{p\le \sqrt x}p^{-2}.
\end{aligned}
\end{displaymath}
Similarly, ${\mathcal R}_2^*(x)$ is given by negating the last term above.
We next write $A_1(x)=B_1(x)+B_2(x)+B_3(x)$, where
\begin{displaymath}
\begin{aligned}
B_1(x):= \sum_{p\le \sqrt x}\frac{\log_2(x/p)}{p},\quad
B_2(x) := \sum_{p\le \sqrt x}\frac{\beta}{p},\quad
B_3(x) := \sum_{p\le \sqrt x}\frac{E(x/p)}{p}.
\end{aligned}
\end{displaymath}
We have $B_2(x) = \beta\cdot T(\sqrt x) = \beta(\log_2\sqrt x+\beta+E(\sqrt x))$.  By partial summation,
\begin{displaymath}
\begin{aligned}
B_1(x) &= T(\sqrt x)\log_2\sqrt x + \int_2^{\sqrt x}\frac{T(t)}{t\log(x/t)}\dd{t}\\
& = (\log_2\sqrt x + \beta+ E(\sqrt x))\log_2\sqrt x  \ + \int_{\log 2}^{\log\sqrt x}\frac{\log u+\beta}{\log x - u}\dd{u} + \int_2^{\sqrt x}\frac{E(t)}{t\log(x/t)}\dd{t}.
\end{aligned}
\end{displaymath}
Denoting the dilogarithm $\text{Li}_2(z) = \sum_{j\ge1}z^j/j^2$, the integral on the left equals
\begin{align*}
\int_{\log 2}^{\log\sqrt x}\frac{\log u+\beta}{\log x - u}\dd{u} 
&= -\bigg[(\beta+\log u)\log(1-\tfrac{u}{\log x}) + \text{Li}_2(\tfrac{u}{\log x})\bigg]_{\log 2}^{\frac{1}{2}\log x}\\
&= (\beta+\log_2 2)\log(1-\tfrac{\log 2}{\log x}) + \text{Li}_2(\tfrac{\log 2}{\log x})\\
&\quad + (\beta+\log_2 \sqrt{x})\log 2 - \text{Li}_2(\tfrac{1}{2}).
\end{align*}
After simplifying
and noting $\text{Li}_2(1/2) = \zeta(2)/2-(\log2)^2/2$, we have
\begin{align}\label{R2formula}
{\mathcal R}_2(x) &= \frac{1}{2}(\log_2 x+\beta)^2 + A_3(x)  - \frac{\zeta(2)}{2}  + (\beta+\log_2 2)\log\left(1-\tfrac{\log 2}{\log x}\right)+ \text{Li}_2\left(\tfrac{\log 2}{\log x}\right) \nonumber\\
& \quad -\frac{1}{2}E(\sqrt x)^2 +
\sum_{p\le \sqrt x}\frac{E\left(x/p\right)}{p}+\int_2^{\sqrt x} \frac{E(t)}{t\log(x/t)}\dd{t}.
\end{align}
By Lemma \ref{pn}, $A_3(x) =  P(2)/2 + O(x^{-1/2})$. And $E(t) \ll_A 1/\log^A t$ implies
\begin{displaymath}
E(\sqrt x)^2 \ \text{and} \; \sum_{p\le \sqrt x}\frac{E\left(x/p\right)}{p} \ \ll_N \ \frac{1}{\log^{N+1}x}
\end{displaymath}
using Mertens' second theorem. Thus by Lemma \ref{integral},
\begin{align*}
{\mathcal R}_2(x) &=\frac{1}{2}(\log_2 x+\beta)^2+\frac{P(2)-\zeta(2)}{2}+\sum_{j=1}^N\frac{\gamma_j}{\log^j x} \nonumber\\
& \quad + (\beta+\log_2 2)\log\left(1-\tfrac{\log 2}{\log x}\right) + \text{Li}_2\left(\tfrac{\log 2}{\log x}\right) \ + \ O_N\left(\frac{1}{\log^{N+1}x}\right)\\
&=\frac{1}{2}(\log_2 x+\beta)^2+\frac{P(2)-\zeta(2)}{2}+\sum_{j=1}^N\frac{\gamma_j}{\log^j x}\\
& \quad + \sum_{j=1}^N \frac{\log^j 2}{j\log^j x}\Big(\frac{1}{j}-\log_22-\beta\Big) \ + \ O_N\left(\frac{1}{\log^{N+1}x}\right)
\end{align*}
using $\text{Li}_2(z) = \sum_{j\ge1}z^j/j^2$ and $\log(1-z)=-\sum_{j\ge1} z^j/j$ with $z=\log 2/\log x$. 

Hence recalling $\alpha_j=(\log 2)^j(1/j-\log_2-\beta)/j+\gamma_j$ completes the proof.
\end{proof}

\begin{proof}[Proof of Theorem \ref{explicitr2theorem}]
We verify the bound for all $x\le x_0 := 10^9$ by computer.  Let $x > x_0$.  We bound the terms in \eqref{R2formula}.  By \eqref{D1} we have $-0.0003/\log^2 x < -E(\sqrt x)^2\le 0.$    By Lemma \ref{integral} we have
\begin{displaymath}
\begin{aligned}
\int_2^{\sqrt x}\frac{E(t)}{t\log(x/t)}\dd{t} &= \frac{\gamma_1}{\log x} - \frac{1}{\log x}\int_{\sqrt x}^\infty\frac{E(t)}{t}\dd{t} + \frac{1}{\log x}\int_2^{\sqrt x}\frac{E(t)}{t}\frac{\log t}{\log(x/t)}\dd{t}\\
&=\gamma_1/\log x + I_1+I_2,
\end{aligned}
\end{displaymath}
say.  To bound $I_1$ above (resp.\ below) we use \eqref{RS3} and \eqref{D2} (resp.\ \eqref{D1}), obtaining $-0.2515/\log^2 x < I_1 < 0.0194/\log^2 x$.  To bound $I_2$ above (resp.\ below) we use \eqref{RS1} (resp.\ \eqref{RS3} and \eqref{D2}), obtaining
\begin{displaymath}
-\frac{0.0218}{\log^2 x} < I_2 < \frac{\log_2(x/2)-\log_2 2}{\log^2 x}.
\end{displaymath}
By Lemma \ref{D} and following the method in \cite[Theorem 5.1]{BKK}, we have
\begin{displaymath}
-\frac{0.2161}{\log^2 x} < \sum_{p\le \sqrt x}\frac{E\left(x/p\right)}{p} < \sum_{p\le \sqrt x}\frac{0.1258}{p\log^2(x/p)}\le \frac{0.1258\log_2 x+0.1593}{\log^2 x}.
\end{displaymath}
Next, by Lemma \ref{pn} we have
\begin{displaymath}
-\frac{0.0007}{\log^2 x} < -\frac{1}{2} \sum_{p>\sqrt x}p^{-2} < 0.
\end{displaymath}
We find by bounding series expansions that
\begin{displaymath}
-\frac{0.2458\beta}{\log^2 x} < \beta\log\left(1-\frac{\log 2}{\log x}\right) + \frac{\beta\log 2}{\log x}< -\frac{0.2402\beta}{\log^2 x},
\end{displaymath}
\begin{displaymath}
\frac{0.1201}{\log^2 x} < \text{Li}_2\left(\frac{\log 2}{\log x}\right) - \frac{\log 2}{\log x} < \frac{0.1221}{\log^2 x},
\end{displaymath}
and
\begin{displaymath}
\frac{0.0818}{\log^2 x} < (\log_2 2)\log\left(1-\frac{\log 2}{\log x}\right) + \frac{\log_2 2 \log 2}{\log x} < \frac{0.0962}{\log^2 x},
\end{displaymath}
where we note that $-\log_2 2>0$ for the last inequality above.  Combining all bounds, we complete the proof of Theorem \ref{explicitr2theorem}.
\end{proof}

\section{A new proof of Theorem \ref{N2}}\label{sec:N2}

We provide an alternate proof of Crisan and Erban's asymptotic expansion for $\mathcal{N}_2(x)$ using our refined estimate for $\mathcal{R}_2(x)$ given in Theorem \ref{r2theorem}. The proof relies on a strong form of the prime number theorem, i.e. $E(x)\ll_A (\log x)^{-A}$ for all $A> 0$.

\begin{proof}[Proof of Theorem \ref{N2}]
By partial summation,
\begin{align*}
\mathcal{N}_2(x) &=x\mathcal{R}_2(x) - \int_4^x \mathcal{R}_2(t) \dd{t}.
\end{align*}
So by Theorem \ref{r2theorem}, at admissible error $\epsilon(x) \ll x\log_2 x/\log^{N+1} x$, we have
\begin{align}\label{eq:N2error}
\mathcal{N}_2(x)& + \epsilon(x) \\
&=\frac{x}{2}(\log_2 x+\beta)^2 + x\nu_2 + \sum_{j=1}^N\frac{x\alpha_j}{\log^j x}
- \int_4^x \bigg(\frac{1}{2}(\log_2 t+\beta)^2 + \nu_2 + \sum_{j=1}^N\frac{\alpha_j}{\log^j t}\bigg) \dd{t} \nonumber\\
&= \frac{x}{2}(\log_2 x)^2+\beta x\log_2 x -\int_4^x \bigg(\frac{1}{2}(\log_2 t)^2+\beta \log_2 t\bigg) \dd{t} + \sum_{j=1}^N \alpha_j\Big(\frac{x}{\log^j x} - \int_4^x \frac{\dd{t}}{\log^j t}\Big) \nonumber\\
&= (\log_2 x+\beta)\text{li}(x)-\int_4^x \frac{\text{li}(t)}{t\log t} \dd{t}-\sum_{j=1}^{N-1} \frac{j\alpha_j}{j!}\sum_{n=j}^{N-1}\frac{n!\,x}{\log^{n+1} x}\nonumber
\end{align}
using the expansions
\begin{align*}
\int_4^x \log_2 t\dd{t} &= x\log_2 x - \text{li}(x) + O(1),\\
\int_4^x (\log_2 t)^2 \dd{t} &= x(\log_2 x)^2 - 2\text{li}(x)\log_2 x +O(1) + 2\int_4^x \frac{\text{li}(t)}{t\log t}\dd{t},\\
\int_4^x \frac{\dd{t}}{\log^{j+1} t} &= \sum_{n=j}^{N-1} \frac{n!}{j!}\frac{x}{\log^{n+1} x} + O_N\left(\frac{x}{\log^{N+1} x}\right) \qquad (j\ge0).
\end{align*}
Note $j=0$ gives expansion for the logarithmic integral $\text{li}(x)=\int_2^x \dd{t}/\log t$, so that \eqref{eq:N2error} becomes
\begin{align*}
&(\log_2 x +\beta)\sum_{n=0}^{N-1} \frac{n!\,x}{\log^{n+1} x}- \sum_{n=0}^{N-1} n!\int_4^x\frac{\dd{t}}{\log^{n+2} t} -\sum_{j=1}^{N-1} \frac{j\alpha_j}{j!}\sum_{n=j}^{N-1}\frac{n!\,x}{\log^{n+1} x}\\
&=(\log_2 x +\beta)\sum_{n=0}^{N-1} \frac{n!\,x}{\log^{n+1} x}- \sum_{n=0}^{N-1}\frac{1}{n+1}\sum_{m=n+1}^{N-1} \frac{m!\,x}{\log^{m+1} x} -\sum_{j=1}^{N-1} \frac{j\alpha_j}{j!}\sum_{n=j}^{N-1}\frac{n!\,x}{\log^{n+1} x}.
\end{align*}
Hence we conclude
\begin{align*}
\mathcal{N}_2(x) + \epsilon(x) = \  \sum_{n=0}^{N-1} \frac{n!\,x}{\log^{n+1} x}\Big(\log_2 x+\beta - \sum_{j=1}^n\Big(\frac{j \alpha_j}{j!}+\frac{1}{j}\Big)\Big).
\end{align*}
By definition of $D_n$, this completes the proof of Theorem \ref{N2}.
\end{proof}

\section*{Acknowledgments}
We are grateful to Carl Pomerance for valuable discussions with regard to both mathematics and exposition. We thank Scott Lambert for helpful comments involving Theorem \ref{r2theorem}, and Ofir Gorodetsky for communicating a sketch of \eqref{eq:alphaj1j}. We also acknowledge the anonymous referee, as well as Nathan Ng for informing us of an error in \cite{dusart2016}. The third author is supported by a Clarendon Scholarship at the University of Oxford.

\end{document}